\documentclass[preprint,12pt]{elsarticle}

\usepackage[a4paper,margin=3cm]{geometry}
\usepackage{amsmath,amssymb,amsthm,mathtools}
\usepackage{hyperref}
\usepackage{enumitem}
\usepackage{graphicx}
\usepackage{tikz} 
\usepackage[nameinlink,noabbrev]{cleveref}
\usepackage{ dsfont }
\usepackage{subcaption}
\usepackage{tikz}
\usetikzlibrary{arrows.meta}
\usepackage{float}
\usetikzlibrary{decorations.pathreplacing, calc}
\usepackage[most]{tcolorbox}

\theoremstyle{plain}
\newtheorem{theorem}{Theorem}[section]
\newtheorem{proposition}[theorem]{Proposition}
\newtheorem{lemma}[theorem]{Lemma}
\newtheorem{corollary}[theorem]{Corollary}
\newtheorem{conjecture}[theorem]{Conjecture}

\theoremstyle{definition}
\newtheorem{definition}[theorem]{Definition}

\theoremstyle{remark}

\begin{document}

\begin{frontmatter}

\title{Generating Functions and the Minimum Spectral Radius in Strongly Connected Digraphs with $m+2$ Edges}

\author{Rostislav Klech}
\ead{Rostislav.Klech@math.slu.cz}
\affiliation{organization={Mathematical Institute in Opava, Silesian University in Opava},
            addressline={Na Rybníčku 626},
            city={Opava},
            postcode={746 01},
            country={Czech Republic}}

\begin{abstract}
We study the minimum adjacency spectral radius in the class
$\mathcal{SC}_{m+2}(m)$ of strongly connected digraphs with $m$ vertices and
$m+2$ edges. Using generating functions for directed paths, we associate with
the relevant digraphs topological polynomials whose smallest positive roots
determine the corresponding spectral radii. Based on an ear decomposition, we
obtain a complete structural classification of $\mathcal{SC}_{m+2}(m)$ by
showing that every digraph in this class can be obtained from a butterfly
digraph by attaching a single ear. This reduces the extremal problem to the
optimization and comparison of finitely many polynomial families subject to
their realizability conditions. We prove that the minimum spectral radius is
determined by the polynomial $P_{\min}(z)=1-2z^{m-1}-z^m$. If
$R_m\in(0,1)$ denotes its unique root, then
$\min_{G\in\mathcal{SC}_{m+2}(m)}\rho(G)=R_m^{-1}$. For $m\geq4$, the minimum
is attained, up to isomorphism, uniquely by the cross-chorded cycle
$\mathcal{C}_m^\times$. For $m=3$, there are exactly two non-isomorphic
minimizers, both with spectral radius $(1+\sqrt5)/2$. Finally, we establish the
bounds $2^{1/(m-1)}<\rho\left(\mathcal{C}_m^\times\right)<3^{1/(m-1)}$.
\end{abstract}

\begin{keyword}
Spectral radius \sep Strongly connected digraphs \sep Generating functions \sep Tricyclic digraphs \sep Topological polynomial \sep Topological entropy

\MSC[2020] 05C50 \sep 05C20 \sep 05A15 \sep 05C35 \sep 37B40
\end{keyword}

\end{frontmatter}

\section{Introduction}

Let $G$ be a finite strongly connected digraph with adjacency matrix $A(G)$, and let
$\rho(G)$ denote the spectral radius of $A(G)$. The spectral radius has been studied
extensively in extremal problems for strongly connected digraphs. In particular,
Lin and Shu \cite{LinShu} characterized the digraphs attaining the minimum and
maximum spectral radius among strongly connected bicyclic digraphs. Li and Zhou
\cite{LiZhou} subsequently determined the digraphs with the second, third, and
fourth smallest spectral radii among all strongly connected digraphs of fixed
order. Further extremal results and bounds under additional structural
restrictions were obtained by Hong and You \cite{HongYou}. More recently,
Shan, Wang, and He \cite{ShanWangHe} studied the more general
$\alpha$-spectral radius for several classes of strongly connected digraphs.

For a finite strongly connected digraph, the spectral radius is closely related
to the exponential growth of directed paths. If $R$ denotes the radius of
convergence of the generating function counting directed paths in $G$, then
\[
\rho(G)=R^{-1}.
\]
Equivalently, if $h(G)$ denotes the topological entropy of $G$, then
\[
h(G)=\ln \rho(G)=-\ln R.
\]
In our previous work \cite{Klech}, we used this connection to develop a
generating-function approach to the entropy of finite strongly connected
digraphs. For the class $\mathcal{SC}_{m+1}(m)$ of strongly connected digraphs
with $m$ vertices and $m+1$ edges, a butterfly parametrization reduced the
problem to the analysis of the smallest positive roots of certain topological
polynomials.

The aim of the present paper is to extend this generating-function method to
the substantially more complicated class
\[
\mathcal{SC}_{m+2}(m),
\]
consisting of strongly connected digraphs with $m$ vertices and $m+2$ edges,
and to determine the minimum adjacency spectral radius in this class. Although
only one additional edge is present compared with $\mathcal{SC}_{m+1}(m)$,
the resulting structure is considerably richer.

A central feature of our approach is that we do not begin with a prescribed
candidate for the minimizer. Instead, we first obtain a complete structural
description of the entire class $\mathcal{SC}_{m+2}(m)$. Using an ear
decomposition, we show that every digraph in this class can be obtained from
a butterfly digraph by attaching a single ear. According to the positions of
the initial and terminal vertices of the ear, this leads to a finite
classification into attachment types. For each type, we derive a corresponding
topological polynomial and optimize its parameters subject to the relevant
realizability conditions.

This complete classification is essential for the extremal result. It ensures
that all possible digraphs in $\mathcal{SC}_{m+2}(m)$ are represented in the
comparison and therefore allows us not only to identify a minimizer, but also
to determine all equality cases. In particular, it yields the uniqueness, up
to isomorphism, of the minimizer for $m\geq4$, while for $m=3$ it reveals a
second non-isomorphic digraph having the same minimum spectral radius.

The resulting extremal topological polynomial is
\[
P_{\min}(z)=1-2z^{m-1}-z^m.
\]
If $R_m\in(0,1)$ denotes its unique root in $(0,1)$, then
\[
\min_{G\in\mathcal{SC}_{m+2}(m)}\rho(G)
=
R_m^{-1}.
\]
For $m\geq4$, this minimum is attained, up to isomorphism, uniquely by the
cross-chorded cycle $\mathcal{C}_m^\times$. For $m=3$, there are exactly two
non-isomorphic minimizers,
\[
\mathcal{C}_3^\times \quad \text{and} \quad (\mathcal{W}^1_2\mathcal{W}^1_3)^-_0
\text{-}\mathcal{B}^1_{3,1},
\]
both having spectral radius
\[
\frac{1+\sqrt5}{2}.
\]
We also prove the estimates
\[
2^{\frac{1}{m-1}}
<
\rho\left(\mathcal{C}_m^\times\right)
<
3^{\frac{1}{m-1}}.
\]

The extremal digraph $\mathcal{C}_m^\times$ has previously appeared in the
spectral literature in a different notation, in particular in the work of
Shan, Wang, and He \cite{ShanWangHe}. The purpose of the present paper,
however, is different. We determine its precise extremal role within the
entire class $\mathcal{SC}_{m+2}(m)$ by means of a complete structural
classification and a generating-function analysis of all admissible types.

Besides yielding the minimum spectral radius, the complete classification also
suggests a natural candidate for the maximum spectral radius. This leads to a
conjecture, formulated at the end of the paper, both for the general setting
and for the loopless subclass.

\medskip
\noindent\textbf{Organization of the paper.} 
Section~2 introduces the necessary notation and the generating-function tools used throughout the paper. 
In Section~3, we derive the structural classification of digraphs in $\mathcal{SC}_{m+2}(m)$ by means of butterfly digraphs and ear attachments. 
Section~4 computes and optimizes the corresponding topological polynomials. 
Finally, Section~5 compares the resulting candidates and determines the minimum spectral radius together with
all equality cases.

\section{Preliminaries and notation}
For standard terminology and basic results on directed graphs, we refer to Bang-Jensen and Gutin~\cite{BangJensenGutin}. We work with finite directed graphs $G=(V,E)$ with $|V|=m$.
Loops are allowed, while multiple edges are not.
A \emph{directed path} or \emph{path} of length $n$ from $u$ to $v$ is a sequence of vertices
\[
P_{uv}=v_0\dots v_n
\]
with $v_0=u$ and $v_n=v$ such that $(v_{i-1},v_i)\in E$ for all $i=1,\dots,n$.
The \emph{length} of $P_{uv}$ is denoted by $|P_{uv}|:=n$.

The digraph $G$ is \emph{strongly connected} if for any $u,v\in V$ there exists a directed path from $u$ to $v$.
We define the \emph{out-neighborhood} of $v\in V$ by
\[
N^+(v):=\{u\in V:(v,u)\in E\},
\]
and the \emph{out-degree} of $v$ by
\[
\deg^+(v):=|N^+(v)|.
\]

If $G=(V(G),E(G))$ and $H=(V(H),E(H))$ are two digraphs, we say that $G$ and $H$ are \emph{isomorphic}, written $G\cong H$, if there exists a bijection
\[
\phi:V(G)\to V(H)
\]
such that for all $u,v\in V(G)$,
\[
(u,v)\in E(G)
\quad\Longleftrightarrow\quad
\bigl(\phi(u),\phi(v)\bigr)\in E(H).
\]

For positive integers $m$ and $q\geq m$, we denote by $\mathcal{SC}_q(m)$ the class of all strongly connected digraphs $G$ satisfying
\[
|V(G)|=m
\qquad\text{and}\qquad
|E(G)|=q.
\]
In particular, the class considered in this paper is $\mathcal{SC}_{m+2}(m)$. 

An ear $\mathcal{E}$ of length $l$ is an alternating sequence
\[
\mathcal{E}=(a_0,e_1,a_1,e_2,\ldots,e_l,a_l),
\]
where $e_1,\ldots,e_l$ are vertices and $a_0,\ldots,a_l$ are directed edges
such that the terminal vertex of $a_{i-1}$ and the initial vertex of $a_i$
are both $e_i$ for every $i=1,\ldots,l$. The vertices $e_1,\ldots,e_l$ are called the internal vertices of $\mathcal{E}$.
We define the length of $\mathcal{E}$ to be the number $l$ of its internal vertices.
Thus, an ear of length $l$ contains $l$ internal vertices and $l+1$ edges.
When an ear $\mathcal{E}$ is inserted from a vertex $u$ to a vertex $v$, the initial
vertex of its first edge is identified with $u$, while the terminal vertex
of its last edge is identified with $v$.

We denote by $\mathcal{C}_m$ the directed cycle with
\begin{align*}
V(\mathcal{C}_m)
&=\{v_1,v_2,\dots,v_m\},\\
E(\mathcal{C}_m)
&=\bigl\{(v_i,v_{i+1}):i\in\{1,2,\dots,m-1\}\bigr\}
  \cup\{(v_m,v_1)\}.
\end{align*}

Let $p(n)$ denote the total number of paths of length $n$ in $G$, i.e.,
\[
p(n)
:=
\bigl|
\{(v_0,\dots,v_n)\in V^{n+1}:
(v_{i-1},v_i)\in E
\text{ for all }i=1,\dots,n\}
\bigr|.
\]
We define the corresponding generating function by
\[
P(z):=\sum_{n\geq 0}p(n)z^n.
\]
The topological entropy of $G$ is defined by
\[
h(G):=
\limsup_{n\to\infty}
\frac{1}{n}\ln\bigl(p(n)\bigr).
\]
For background on generating functions and their analytic properties, we refer to Flajolet and Sedgewick~\cite{FlajoletSedgewick} and Wilf~\cite{Wilf}.

\begin{theorem}[Pringsheim’s theorem]\label{pring}
Let
\[
f(z)=\sum_{n=0}^{\infty} a_n z^n
\]
be a power series with $a_n\ge 0$ for all $n$ and radius of convergence $R\in(0,\infty)$.
Then $z=R$ is a singular point of $f$.
\end{theorem}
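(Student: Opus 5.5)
We prove Pringsheim's theorem by contradiction. Suppose $z=R$ is a regular point of $f$. Then $f$, which is analytic on the open disc $|z|<R$, extends holomorphically to some disc $D(R,\delta)$ with $\delta>0$. The plan is to re-expand $f$ about the interior point $z_0=R-\eta$, where $0<\eta<\delta/2$. We will show that this Taylor series converges at a real point $x>R$, and then use the nonnegativity of the coefficients to conclude that the original series converges at $x$. That contradicts the definition of $R$ as the radius of convergence.

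\emph{Step 1 (the re-expanded series has radius larger than $\eta$).} The function $f$ is analytic on $D(0,R)\cup D(R,\delta)$. Every point within distance $\eta+\varepsilon$ of $z_0$ lies in this union for some small $\varepsilon>0$. Indeed, points near the real axis close to $R$ lie in $D(R,\delta)$. For points $w$ with $|w-R|\ge\delta$, a geometric estimate shows $|w|<R$ whenever $|w-z_0|<\eta+\varepsilon$ and $\varepsilon$ is small. This is the only slightly fiddly step. It amounts to the fact that the boundary arc of $D(0,R)$ outside $D(R,\delta)$ stays at distance strictly greater than $\eta$ from $z_0$, which holds by compactness because its distance to $z_0$ is at least $\sqrt{\eta^2+c}$ for some $c>0$. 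It follows from Cauchy's theorem that
\[
f(z)=\sum_{k\ge0}\frac{f^{(k)}(z_0)}{k!}(z-z_0)^k
\]
converges for $|z-z_0|<\eta+\varepsilon$, and in particular at $x=z_0+\eta+\varepsilon/2=R+\varepsilon/2$.

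\emph{Step 2 (use positivity).} Differentiating termwise inside the disc of convergence gives
\[
\frac{f^{(k)}(z_0)}{k!}=\sum_{n\ge k}\binom nk a_n z_0^{n-k},
\]
and every term here is nonnegative. Hence
\[
\sum_k\sum_{n\ge k}\binom nk a_n z_0^{n-k}(x-z_0)^k<\infty
\]
is a convergent double series of nonnegative terms. Tonelli's theorem lets us swap the order of summation, which gives
\[
\sum_n a_n\sum_{k\le n}\binom nk z_0^{n-k}(x-z_0)^k=\sum_n a_n x^n<\infty.
\]
So the original series converges at $x>R$, contradicting the definition of $R$. Therefore $R$ is a singular point.

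The main obstacle is the geometric claim in Step 1. I would handle it by choosing $\delta$ first and $\eta$ small relative to $\delta$. Alternatively, one can invoke the standard fact that the Taylor series at $z_0$ has radius equal to the distance from $z_0$ to the nearest singularity. Since by assumption all boundary points sufficiently close to $R$ are regular, the compact boundary circle can be covered by finitely many discs of regularity, and this gives the required margin $\varepsilon$.
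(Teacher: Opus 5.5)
\textbf{Comparison with the paper.} The paper gives no proof of this statement. It quotes Pringsheim's theorem as a classical fact, with Flajolet--Sedgewick and Wilf as background, and uses it only as a black box in Lemma~\ref{lem:topological_equation_from_vertex_gf} to conclude that $\Phi(R)=0$. Your argument is the standard textbook proof, and it is correct. You re-expand at a positive real point $z_0<R$, show that the new Taylor series reaches a real point $x>R$, and then use $a_n\ge 0$ together with Tonelli to push convergence back to $\sum a_n x^n$. What your version adds over the citation is that it shows exactly where positivity enters: in the signs of $f^{(k)}(z_0)/k!=\sum_{n\ge k}\binom nk a_n z_0^{n-k}$ and of $(x-z_0)^k$.

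\textbf{Small repairs.}
\begin{enumerate}
\item You need $z_0>0$, i.e.\ $\eta<R$, for the sign argument in Step~2. The condition $\eta<\delta/2$ alone does not guarantee this, so either shrink $\delta$ below $R$ first or take $0<\eta<\min\{R,\delta/2\}$.
\item Step~1 is cleanest by compactness. For $0<\eta<R$ and $w\in\overline{D}(z_0,\eta)$ one has $|w|\le |z_0|+|w-z_0|\le R$, with equality only at $w=R$. Hence the compact set $\overline{D}(z_0,\eta)$ lies in the open set $D(0,R)\cup D(R,\delta)$, and so does $D(z_0,\eta+\varepsilon)$ for some $\varepsilon>0$.
\item Your explicit estimate also works; for $|w|=R$ there is the exact identity $|w-z_0|^2=\eta^2+(R-\eta)|w-R|^2/R$. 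However, it only controls the arc of $|w|=R$. The rest of the boundary of $\{|w|\ge R\}\setminus D(R,\delta)$ is the arc of $|w-R|=\delta$, whose distance from $z_0$ is at least $\delta-\eta>\eta$. That is precisely where $\eta<\delta/2$ is needed, and you should say so.
\item Discard the alternative in your last paragraph. The whole circle $|z|=R$ can never be covered by discs of regularity, since otherwise the radius of convergence would exceed $R$. Only points near $R$ are known to be regular. Your main argument does not depend on that remark.
\end{enumerate}
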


\begin{proposition}\label{spec_rad}
Let $G$ be a strongly connected digraph, let $A(G)$ denote its adjacency matrix, and let $\rho(G)$ be the spectral radius of $A(G)$. Let $R$ denote the radius of convergence of the generating function
\[
P(z)=\sum_{n=0}^{\infty} p(n)z^n.
\]
Then
\[
\rho(G)=R^{-1}.
\]
\end{proposition}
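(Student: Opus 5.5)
We prove the proposition by expressing $p(n)$ through powers of the adjacency matrix and reading off the radius of convergence with the Cauchy--Hadamard formula. Let $A=A(G)$ and let $\mathbf{1}$ be the all-ones vector. The $(u,v)$ entry of $A^n$ counts paths of length $n$ from $u$ to $v$, so
\[
p(n)=\mathbf{1}^{T}A^{n}\mathbf{1}=\sum_{u,v}(A^n)_{uv}.
\]
Hence $P(z)=\mathbf{1}^T(I-zA)^{-1}\mathbf{1}$ for small $|z|$, and
\[
R^{-1}=\limsup_{n\to\infty} p(n)^{1/n}.
\]
It therefore suffices to show that $p(n)^{1/n}\to\rho(G)$.

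\textbf{Upper bound.} Since $A$ is nonnegative, $p(n)=\|A^n\|_{1}$, where $\|\cdot\|_1$ is the entrywise sum norm. All matrix norms on a finite-dimensional space are equivalent, so Gelfand's formula $\|A^n\|^{1/n}\to\rho(A)$ gives $\limsup p(n)^{1/n}\le\rho(G)$.

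\textbf{Lower bound.} Here we use strong connectivity. Because $A$ is irreducible and nonnegative, Perron--Frobenius gives an eigenvector $x>0$ with $Ax=\rho x$. Let $c=\min_i x_i$ and $C=\max_i x_i$; both are positive. Then
\[
p(n)=\mathbf{1}^TA^n\mathbf{1}\;\ge\; C^{-1}\,\mathbf{1}^TA^nx=C^{-1}\rho^n\,\mathbf{1}^Tx\;\ge\;\frac{mc}{C}\,\rho^n.
\]
So $\liminf p(n)^{1/n}\ge\rho$.

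Combining the two bounds, $\lim p(n)^{1/n}=\rho(G)$, and the Cauchy--Hadamard formula yields $R=\rho(G)^{-1}$. If $G$ has no cycles at all, then $\rho=0$ and $R=\infty$, but a strongly connected digraph with at least one edge always contains a cycle, so $\rho>0$ in the cases of interest. Theorem~\ref{pring} is not needed here; it will matter later, when $R$ is identified as the smallest positive root of a topological polynomial.

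The main obstacle is the lower bound: one must make sure that $\rho$ is actually realized as the growth rate of the total path count, rather than being cancelled by other eigenvalues of the same modulus. The positive Perron vector handles this cleanly. Without irreducibility one could instead argue that $\rho$ is attained on some strongly connected component, but in our setting Perron--Frobenius suffices.
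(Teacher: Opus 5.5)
Your proof is correct, and it takes a different route from the paper's. The paper's proof is a one-line appeal to the ``well-known'' identity $h(G)=\ln\rho(G)=-\ln R$. The second equality is just Cauchy--Hadamard applied to the definition $h(G)=\limsup_n \frac1n\ln p(n)$. The first is the standard symbolic-dynamics fact that this growth rate equals the logarithm of the Perron eigenvalue. You prove that first equality yourself from $p(n)=\mathbf{1}^TA^n\mathbf{1}$, so your argument is self-contained where the paper's is a citation.

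One remark on how you weight the two halves. Because $A^n\ge 0$, $p(n)$ is itself a norm of $A^n$, so Gelfand's formula with norm equivalence already gives $\lim_n p(n)^{1/n}=\rho(G)$ in both directions. No cancellation can occur, and strong connectivity is not needed for this statement at all. Conversely, the Perron vector alone also gives both directions: $\mathbf{1}\le x/c$ yields $p(n)\le (mC/c)\rho^n$, hence $p(n)\asymp\rho^n$.

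Your Perron argument is most useful if you replace the left vector $\mathbf{1}^T$ by $\mathbf{e}_u^T$. The same sandwich gives $(c/C)\rho^n\le p_u(n)\le (C/c)\rho^n$, so every single-vertex generating function has radius exactly $\rho^{-1}$. That per-vertex statement genuinely requires irreducibility. It is what the paper imports from \cite{Klech} and relies on when it reads off $R$ from $H_1(z)$ via Lemma~\ref{lem:topological_equation_from_vertex_gf}.
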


\begin{proof}
It is well known that the topological entropy $h(G)$ of a strongly connected digraph $G$ admits the equivalent representations
\[
h(G)=\ln \rho(G)=-\ln R.
\]
Therefore,
\[
\rho(G)=R^{-1}.
\]
\end{proof}

\begin{lemma}[\cite{Klech}, Lemma 3.3.]
    Let $G$ be a strongly connected digraph with $m$ vertices. For each vertex $v_i$, let $P_i(z)$ be the generating function for the number of paths in $G$ that start at $v_i$. Then all $P_i(z)$ have the same radius of convergence.
\end{lemma}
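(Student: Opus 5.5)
Fix a vertex $v_i$ and write $P_i(z)=\sum_{n\ge0}p_i(n)z^n$, where $p_i(n)$ is the number of paths of length $n$ starting at $v_i$. The plan is to compare the coefficient sequences $p_i(n)$ and $p_j(n)$ for any two vertices $v_i,v_j$. From a coefficientwise domination up to a bounded shift and a constant factor we obtain $R_i\ge R_j$ and, by symmetry, $R_j\ge R_i$. The only input needed is strong connectivity: for every ordered pair $(v_i,v_j)$ there is a directed path from $v_i$ to $v_j$.

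\textbf{Key injection.} Let $Q$ be a fixed path from $v_i$ to $v_j$ of length $d=|Q|\le m-1$. Every path of length $n$ starting at $v_j$ can be prefixed by $Q$, which gives a path of length $n+d$ starting at $v_i$. The map is injective, since $Q$ is fixed and the suffix determines the original path. Hence
\[
p_j(n)\le p_i(n+d)\qquad\text{for all } n\ge 0 .
\]

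\textbf{Radii.} The radius of convergence is $R_k^{-1}=\limsup_{n\to\infty}p_k(n)^{1/n}$ by Cauchy--Hadamard. The inequality above gives
\[
\limsup_n p_j(n)^{1/n}\le\limsup_n p_i(n+d)^{1/n}=\limsup_n p_i(n)^{1/n}.
\]
The equality holds because a fixed shift $d$ does not change the exponential growth rate: $p_i(n+d)^{1/n}=\bigl(p_i(n+d)^{1/(n+d)}\bigr)^{(n+d)/n}$ and $(n+d)/n\to1$. To justify that step we use the bound $p_i(n)\le m^{n+1}$, so the quantities $p_i(n)^{1/n}$ stay bounded. We must also handle zero terms: $p_i(n)\ge1$ for all $n$, because strong connectivity with $m\ge1$ forces every vertex to have out-degree at least one, except when $m=1$ with no loop, which is a degenerate case to set aside. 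It follows that $R_i\le R_j$. Exchanging the roles of $v_i$ and $v_j$ using a path from $v_j$ to $v_i$ gives $R_i=R_j$.

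\textbf{Main obstacle.} The combinatorics is straightforward. The step needing the most care is the limsup-under-shift argument, together with the degenerate case where the series is a polynomial and the radius is infinite. In that case, for every $i$ we would have $p_i(n)=0$ for large $n$, by the same injection applied in both directions, so all radii are again equal (namely $\infty$) and the conclusion still holds.
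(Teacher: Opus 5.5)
Your proof is correct. Prefixing a fixed $v_i$--$v_j$ path gives the injection $p_j(n)\le p_i(n+d)$, and symmetry finishes the argument. This is the standard proof; the paper itself gives no proof here and simply cites the lemma from \cite{Klech}.

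The analytic step can be shortened by reading the injection as a coefficientwise inequality between $z^dP_j(z)$ and $P_i(z)$. For $0<z<R_i$ the series $P_j(z)$ has nonnegative terms and is bounded by $z^{-d}P_i(z)<\infty$, so $R_j\ge R_i$ follows at once. You then need neither the limsup-under-shift argument nor the positivity of $p_i(n)$ (positivity is not actually needed even in your version).
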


\begin{corollary}[\cite{Klech}, Corollary 3.4.]
    Let $G$ be a strongly connected digraph with $m$ vertices. Then the generating functions
    $P_1(z),\dots,P_m(z)$ all have the same radius of convergence, say $R$.
    Moreover, the total generating function
    \[
    P(z)=\sum_{i=1}^m P_i(z)
    \]
    has radius of convergence $R$ as well. In particular, to determine $R$ (and hence $\rho(G)$)
    it suffices to compute $P_i(z)$ for any fixed vertex $v_i$.
\end{corollary}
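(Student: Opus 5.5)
The corollary has two parts: the $P_i(z)$ share a common radius of convergence $R$, and the total generating function $P(z)$ also has radius $R$. The first part is exactly the preceding lemma, so only the statement about $P(z)=\sum_{i=1}^m P_i(z)$ needs an argument. The final remark, that $R$ and hence $\rho(G)$ can be read off from any single $P_i$, will then follow from Proposition~\ref{spec_rad}.

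First I will check the decomposition. Every path $(v_0,\dots,v_n)$ has a unique initial vertex $v_0$. Hence $p(n)=\sum_{i=1}^m p_i(n)$, where $p_i(n)$ counts the paths of length $n$ starting at $v_i$. Summing over $n$ gives $P(z)=\sum_i P_i(z)$ as formal power series with nonnegative coefficients.

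Next I will compare radii of convergence using nonnegativity. The coefficient bounds are
\[
p_i(n)\le p(n)\le m\max_j p_j(n)\le \sum_j p_j(n).
\]
By the left inequality, $P$ converges absolutely at no point where some $P_i$ diverges. So the radius of $P$ is at most that of $P_1$, which is at most $R$.

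For the reverse inequality, take $|z|<R$. Each $P_i(z)$ then converges absolutely, so the finite sum $\sum_i P_i(z)$ converges absolutely, and so does $P(z)$ because its coefficients are these sums. Therefore the radius of $P$ is at least $R$, and the two radii are equal.

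A cleaner way to phrase the same argument is through Cauchy--Hadamard. Using $\max_i p_i(n)\le p(n)\le m\max_i p_i(n)$ and $m^{1/n}\to 1$, we get $\limsup_n p(n)^{1/n}=\max_i\limsup_n p_i(n)^{1/n}=1/R$.

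There is no real obstacle here. The only care needed is to work with absolute convergence or $\limsup$ bounds rather than pointwise sums, so that no cancellation issues arise. Nonnegativity of the coefficients makes this automatic.
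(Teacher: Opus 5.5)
Your argument is correct and is the standard one behind this result (the paper gives no proof here and simply imports it from \cite{Klech}): the preceding lemma (\cite{Klech}, Lemma~3.3) supplies the common radius $R$, and since $p(n)=\sum_i p_i(n)$ with nonnegative terms, the bounds $\max_i p_i(n)\le p(n)\le m\max_i p_i(n)$ together with $m^{1/n}\to 1$ force $P$ to have radius $R$ as well. One harmless slip: the last link $m\max_j p_j(n)\le\sum_j p_j(n)$ in your first displayed chain is backwards (in general $\sum_j p_j(n)\le m\max_j p_j(n)$), but you never use it, and your Cauchy--Hadamard formulation states the bounds correctly.
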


\begin{lemma}[\cite{Klech}, Lemma 3.6.]\label{cycle_rad}
    Let $G$ be a strongly connected digraph on $m$ vertices and let
\[
P(z)=\sum_{n=0}^{\infty} p(n)\,z^n
\]
be the generating function for the total number of paths of length $n$ in $G$.
Let $R$ be the radius of convergence of $P(z)$. Then
\[
R\in(0,1].
\]
Moreover,
\[
R=1 \Longleftrightarrow G\cong \mathcal{C}_m.
\]
\end{lemma}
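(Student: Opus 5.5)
We prove the two parts of Lemma~\ref{cycle_rad} separately: first $R\in(0,1]$, then the characterization of the equality case $R=1$. Throughout we use Proposition~\ref{spec_rad} to pass between $R$ and $\rho(G)=R^{-1}$, so everything reduces to statements about the adjacency matrix $A=A(G)$.

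\emph{Step 1: $R\in(0,1]$.} For the lower bound on $R$, note that $p(n)\le m\cdot\max_v\deg^+(v)^n\le m\cdot m^n$. Hence $\limsup_n p(n)^{1/n}\le m$, and so $R\ge 1/m>0$.

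For the upper bound $R\le 1$, strong connectivity with $m\ge1$ gives at least one closed walk. Indeed, if $m=1$ the vertex must carry a loop; otherwise paths $u\to v\to u$ exist. Fix a closed walk $W$ through a vertex $u$ of length $k\ge1$. Concatenating copies of $W$ gives a path of length $jk$ for every $j$, so $p(jk)\ge1$ for infinitely many indices. Therefore $P(z)$ diverges at $z=1$, and $R\le1$. (Equivalently, $\rho(G)\ge1$ because $A$ is a nonzero nonnegative irreducible integer matrix.)

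\emph{Step 2: $G\cong\mathcal{C}_m\Rightarrow R=1$.} For the cycle, every vertex has out-degree exactly $1$, so $p(n)=m$ for all $n$. Thus $P(z)=m/(1-z)$, which has $R=1$.

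\emph{Step 3: $R=1\Rightarrow G\cong\mathcal{C}_m$.} This is the main obstacle. We argue by contrapositive: if $G\not\cong\mathcal{C}_m$, then $\rho(G)>1$.

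First, strong connectivity forces $\deg^+(v)\ge1$ for every $v$. Since $G$ has no multiple edges, if every vertex had out-degree exactly $1$ (and, symmetrically, in-degree $1$), then $G$ would be a disjoint union of cycles. Strong connectivity then makes it a single cycle through all $m$ vertices, that is, $\mathcal{C}_m$, or a single loop when $m=1$, which is $\mathcal{C}_1$.

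So assume some vertex $w$ has $\deg^+(w)\ge2$. By strong connectivity there are two distinct closed walks $W_1,W_2$ based at $w$, starting with different out-edges. Their lengths $a,b\ge1$ can be chosen at most $m$, by taking shortest return paths.

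Now consider words in $\{W_1,W_2\}$. Two distinct words give distinct walks, since at each return to $w$ the next edge distinguishes which factor follows. Hence the number of closed walks of length $n$ at $w$ is at least the number $c_n$ of compositions of $n$ into parts $a$ and $b$. Its generating function is
\[
\sum_n c_n z^n=\frac{1}{1-z^a-z^b},
\]
and this series has a singularity at the root $r<1$ of $z^a+z^b=1$. Consequently
\[
P(z)\ \ge\ \sum_n c_n z^n
\]
coefficientwise, which gives $R\le r<1$ and contradicts $R=1$.

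The delicate point is the injectivity of the word-to-walk map: distinct concatenations must yield distinct walks. We handle this by fixing the first edge of each $W_i$ at $w$, so that any two different words are distinguished at the first position where they differ.

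Combining Steps 1–3 gives $R\in(0,1]$, with $R=1$ exactly when $G\cong\mathcal{C}_m$.
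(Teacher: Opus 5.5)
The paper does not prove this lemma. It only imports it from \cite{Klech} (Lemma~3.6), so there is no in-paper argument to compare yours against. Your proof is correct and self-contained.

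Its core is the prefix-code argument in Step~3. Because $W_1$ and $W_2$ leave $w$ along different edges, distinct words over $\{W_1,W_2\}$ give distinct closed walks at $w$. Hence $p(n)\ge c_n$ coefficientwise, and so $R\le r<1$.

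This argument never uses Perron--Frobenius theory. Your opening appeal to Proposition~\ref{spec_rad} is therefore unnecessary: every step is phrased directly in terms of $R$, which matches the generating-function spirit of the paper.

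You also set up, but did not use, a quantitative by-product. Since $a,b\le m$, we have $z^a+z^b\ge 2z^m$ on $(0,1)$, hence $r\le 2^{-1/m}$. Therefore $\rho(G)\ge 2^{1/m}$ for every strongly connected $G\not\cong\mathcal{C}_m$, which is a sharper statement than the bare dichotomy $R=1$ versus $R<1$.

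A few points deserve one line each.
\begin{enumerate}
\item Under the paper's definition of strong connectivity, paths of length $0$ are allowed, so a single vertex without a loop is strongly connected. It has $P(z)=1$ and $R=\infty$. The lemma therefore tacitly assumes that $G$ has at least one edge, which is automatic in $\mathcal{SC}_q(m)$ with $q\ge m$. Your remark that for $m=1$ the vertex must carry a loop should be stated as this standing assumption, not derived.
\item The claim that every in-degree is $1$ as well needs justification. If all out-degrees equal $1$, there are exactly $m$ edges. Strong connectivity forces every in-degree to be at least $1$, so each is exactly $1$.
\item When $a=b$, ``compositions of $n$ into parts $a$ and $b$'' must be read as words over the two-letter alphabet $\{W_1,W_2\}$. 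Under that reading your generating function $1/(1-z^a-z^b)=1/(1-2z^a)$ is correct.
\item To pass from the pole at $r$ to a radius of convergence at most $r$, use monotone convergence: $\sum_n c_n r^n=\lim_{z\to r^-}(1-z^a-z^b)^{-1}=\infty$.
\end{enumerate}
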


\begin{definition}\label{def:topological_polynomial}
Let $G$ be a strongly connected digraph. Let
\[
P(z)=\sum_{n=0}^{\infty} p(n)\,z^n
\]
be the generating function, where $p(n)$ denotes the number of paths of length $n$, and let $R\in(0,1]$ be the radius of convergence of $P(z)$.

A polynomial $T(G;z)\in\mathbb{Z}[z]$ is called a \emph{topological polynomial of $G$}
if it satisfies the following two conditions:
\begin{enumerate}
\item $T(G;R)=0$,
\item $T(G;z)\neq 0$ for every $z\in(0,R)$.
\end{enumerate}

The equation
\[
T(G;z)=0
\]
is called the \emph{topological equation} of $G$.
\end{definition}

\begin{lemma}[\cite{Klech}, Lemma 3.8.]\label{recurence}
Let $G$ be a strongly connected digraph and suppose that there exists a sequence of vertices
\[
(v_1,v_2,\dots,v_i)
\]
such that 
\[
N^+(v_j)=\{v_{j+1}\}\quad \forall j\in\{1,2,\dots,i-1\}.
\]
Then
\[
P_1(z)=\frac{1-z^{\,i-1}}{1-z}+z^{\,i-1}P_i(z).
\]
\end{lemma}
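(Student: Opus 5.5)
We argue by counting paths of length $n$ that start at $v_1$. Since $N^+(v_j)=\{v_{j+1}\}$ for $j=1,\dots,i-1$, every path starting at $v_1$ has a forced beginning: its first $\min(n,i-1)$ steps must go $v_1\to v_2\to\cdots$. Write $p_k(n)$ for the number of paths of length $n$ starting at $v_k$, so that $P_k(z)=\sum_{n\ge0}p_k(n)z^n$.

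\textbf{Step 1 (one step).} For $1\le j\le i-1$ we show the single-step relation
\[
P_j(z)=1+zP_{j+1}(z).
\]
The path of length $0$ contributes the constant term $1$. Any path of length $n\ge1$ starting at $v_j$ must use the edge $(v_j,v_{j+1})$ first, because this is the only edge leaving $v_j$. Deleting that first edge therefore gives a bijection onto the paths of length $n-1$ starting at $v_{j+1}$. Hence $p_j(n)=p_{j+1}(n-1)$ for all $n\ge1$, which is exactly the displayed identity.

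\textbf{Step 2 (iterate).} We iterate Step 1, or equivalently argue by induction on $i$, to obtain
\[
P_1(z)=1+z+\cdots+z^{i-2}+z^{i-1}P_i(z).
\]
The finite geometric sum equals $\frac{1-z^{i-1}}{1-z}$, which gives the stated formula. The case $i=1$ is the trivial identity $P_1=P_1$.

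Both sides are formal power series. The identity holds coefficientwise, so it holds wherever the series converge. No analytic input such as Pringsheim's theorem is needed, and strong connectivity is used only so that the $P_k$ are the objects considered earlier.

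There is essentially no obstacle here. The one point that needs care is the bijection in Step 1: it relies on $N^+(v_j)$ being a singleton. This holds even if some $v_k$ coincide or if $v_{j+1}$ carries a loop, because nothing constrains the structure after $v_{j+1}$.
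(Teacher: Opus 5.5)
Your proof is correct and follows the same route the paper relies on: the one-step rule $U=1+zV_1+\cdots+zV_d$ (here with the single out-neighbour $v_{j+1}$, giving $P_j=1+zP_{j+1}$), iterated along the forced chain and summed as a finite geometric series. The paper does not reprove this lemma and simply cites \cite{Klech}; your bijection is valid because ``paths'' in the paper are walks (elements of $V^{n+1}$ with consecutive edges), so repeated vertices cause no difficulty.
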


\begin{lemma}\label{lem:topological_equation_from_vertex_gf}
Let $G=(V,E)\in\mathcal{SC}_{m+2}(m)$, and let $u\in V$.
Let
\[
U(z)=\sum_{n=0}^{\infty}p_u(n)z^n
\]
be the generating function for the number $p_u(n)$ of directed paths of
length $n$ starting at $u$. Suppose that $U(z)$ can be written in the form
\[
U(z)=\frac{Q(z)}{(1-z)\Phi(z)},
\]
where $Q(z)$ is a polynomial and $\Phi(z)$ is a polynomial satisfying
\[
\Phi(0)>0,\qquad \Phi(1)<0,
\]
and assume that $\Phi$ is strictly decreasing on $(0,1)$. Then the equation
\[
\Phi(z)=0
\]
is the topological equation of $G$.
\end{lemma}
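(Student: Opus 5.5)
We need to show that $\Phi$ has a unique zero $r\in(0,1)$, that this zero equals the radius of convergence $R$ of $U$ (and hence of $P$), and that $\Phi$ does not vanish on $(0,R)$. Since $\Phi$ has integer coefficients in every application (or we take that as part of the hypothesis), it then qualifies as a topological polynomial in the sense of Definition~\ref{def:topological_polynomial}.

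\emph{Step 1: the root.} Since $\Phi$ is continuous on $[0,1]$ with $\Phi(0)>0$ and $\Phi(1)<0$, the intermediate value theorem gives a root $r\in(0,1)$. Strict monotonicity on $(0,1)$ makes it unique. Moreover, $\Phi>0$ on $(0,r)$ and $\Phi<0$ on $(r,1)$.

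\emph{Step 2: the radius of convergence.} By the corollary quoted from \cite{Klech}, $U$ has the same radius of convergence $R$ as $P$, and by Lemma~\ref{cycle_rad} we have $R\in(0,1]$. Because $|E|=m+2>m$, the digraph $G$ is not a cycle, so in fact $R<1$. Near $0$ the rational expression $Q/((1-z)\Phi)$ equals the power series $U$; we may cancel common factors, and cancellation can only remove poles.

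\begin{itemize}
\item \emph{$R\ge r$.} The function $(1-z)\Phi(z)$ has no zeros in $[0,r)$. However, zeros of $\Phi$ in the complex disc $|z|<r$ are not excluded by monotonicity on the real axis. To handle this, we use Pringsheim's theorem (Theorem~\ref{pring}): $U$ has nonnegative coefficients, so $z=R$ is a singularity of $U$. A rational function's singularities are its poles, so $R$ is a positive real zero of $(1-z)\Phi$. Since $R<1$, this forces $\Phi(R)=0$, and therefore $R=r$ by uniqueness.
\item \emph{The remaining gap.} We must still ensure that the pole at $r$ is not cancelled, that is, that $U$ is genuinely singular somewhere on $(0,1)$. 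But $R<1$, and $R$ itself is a singular point by Pringsheim's theorem, so the argument above already covers this.
\end{itemize}

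\emph{Step 3: conclusion.} We obtain $\Phi(R)=0$, and $\Phi\neq0$ on $(0,R)$ by Step 1. Hence $\Phi$ is a topological polynomial and $\Phi(z)=0$ is the topological equation of $G$.

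The main subtlety is identifying $R$ with $r$ rather than with some spurious or cancelled root. Pringsheim's theorem, combined with rationality and the bound $R<1$ from Lemma~\ref{cycle_rad}, resolves it.
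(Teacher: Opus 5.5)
Your proposal is correct and follows the paper's argument. Pringsheim's theorem makes $z=R$ a singularity of the rational function $U$; the factor $1-z$ cannot vanish there because $R<1$, so $\Phi(R)=0$, and uniqueness of the root of $\Phi$ in $(0,1)$ identifies $R$ with that root. You are slightly more careful than the paper in three places: you justify $R<1$ via Lemma~\ref{cycle_rad} (a digraph with $m+2$ edges is not $\mathcal{C}_m$), you transfer the radius from $U$ to $P$ via the quoted corollary, and you explicitly check the non-vanishing condition on $(0,R)$ required by Definition~\ref{def:topological_polynomial}.
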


\begin{proof}
The coefficients of $U(z)$ are nonnegative. Hence, by Pringsheim's theorem,
the point $z=R$ is a singular point of $U(z)$. On the other hand, by
assumption,
\[
U(z)=\frac{Q(z)}{(1-z)\Phi(z)}.
\]
Since $R\in(0,1)$, the factor $1-z$ does not vanish at $z=R$.
Therefore, the singularity of $U(z)$ at $z=R$ must come from the factor
$\Phi(z)$. Hence
\[
\Phi(R)=0.
\]

Now, since $\Phi(0)>0$, $\Phi(1)<0$, and $\Phi$ is strictly decreasing on
$(0,1)$, the equation
\[
\Phi(z)=0
\]
has a unique solution in the interval $(0,1)$. We have just shown that this
solution is precisely $z=R$, the radius of convergence of $U(z)$.

Thus, the equation
\[
\Phi(z)=0
\]
determines the radius of convergence associated with $G$, and hence it is the
topological equation of $G$.
\end{proof}

\section{Structural Characterization and Decomposition of the class $\mathcal{SC}_{m+2}(m)$}

\begin{definition}
Let $G\in\mathcal{SC}_{m+1}(m)$ and $k_1,k_2 \in \{1,2,\dots\}$.  We say that
$G$ is a \emph{$(t,k_1,k_2)$-butterfly digraph} $\mathcal{B}^{\,t}_{k_1,k_2}$ if
\[
  k_1 + k_2 - t = m,\qquad
  1 \le t \le \min\{k_1,k_2\},
\]
and there exist two distinct directed simple cycles $\mathcal{C}_{k_1},\mathcal{C}_{k_2}$ as subgraphs of $G$ satisfying
\[
  \left|V(\mathcal{C}_{k_1})\right| = k_1,\quad
  \left|V(\mathcal{C}_{k_2})\right| = k_2,\quad
  \bigl|V(\mathcal{C}_{k_1})\cap V(\mathcal{C}_{k_2})\bigr| = t.
\]
\end{definition}

\begin{lemma}[\cite{Klech}, Lemma 4.3.]\label{lem:m+1_is_butterfly}
Let $G\in\mathcal{SC}_{m+1}(m)$.
Then there exist integers $k_1,k_2\in\mathds{N}$ and $t\in\mathds{N}$ such that
\[
G\cong \mathcal{B}^{\,t}_{k_1,k_2}.
\]
\end{lemma}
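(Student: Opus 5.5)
We argue via an ear decomposition of $G\in\mathcal{SC}_{m+1}(m)$. Since $G$ is strongly connected, it contains a directed simple cycle $\mathcal{C}$ (possibly a loop). A strongly connected digraph has a directed ear decomposition $G=\mathcal{C}\cup\mathcal{E}_1\cup\dots\cup\mathcal{E}_r$. Each ear either joins two vertices of the current subgraph through new internal vertices, or is a closed ear that starts and ends at the same vertex $u$ of the current subgraph. In the paper's convention an ear of length $l$ adds $l$ vertices and $l+1$ edges, so it raises $|E|-|V|$ by exactly one. A closed ear at $u$ with $l$ internal vertices behaves the same way: it is a cycle through $u$ with $l+1$ edges and $l$ new vertices. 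The initial cycle has $|E|-|V|=0$, and $G$ has $|E|-|V|=1$. Hence $r=1$, and $G=\mathcal{C}\cup\mathcal{E}$ for a single ear $\mathcal{E}$ inserted from some $u\in V(\mathcal{C})$ to some $v\in V(\mathcal{C})$.

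Next we produce the two cycles. Write $k=|V(\mathcal{C})|$ and let $l$ be the length of $\mathcal{E}$, so that $m=k+l$. Take $\mathcal{C}_{k_1}:=\mathcal{C}$, so $k_1=k$. Let $\mathcal{C}_{k_2}$ be the ear $\mathcal{E}$ from $u$ to $v$ followed by the unique path $Q_{vu}$ along $\mathcal{C}$ from $v$ to $u$, with $Q_{vu}$ trivial if $u=v$. This closed walk is a simple cycle, because the internal vertices of $\mathcal{E}$ are new and $Q_{vu}$ is simple on $\mathcal{C}$. Its vertex set is $\{e_1,\dots,e_l\}\cup V(Q_{vu})$. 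Therefore
\[
t=\bigl|V(\mathcal{C}_{k_1})\cap V(\mathcal{C}_{k_2})\bigr|=|V(Q_{vu})|\ge 1
\qquad\text{and}\qquad
k_2=l+t.
\]
It follows that $k_1+k_2-t=k+l=m$. The bound $t\le\min\{k_1,k_2\}$ is clear, since $t\le k$ and $t\le l+t$. We still need the two cycles to be distinct. If $l\ge1$, $\mathcal{C}_{k_2}$ contains new vertices. If $l=0$, the ear is a single edge $(u,v)$ not belonging to $\mathcal{C}$, which is possible because there are no multiple edges; then $\mathcal{C}_{k_2}$ uses an edge outside $\mathcal{C}$.

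The main obstacle is rigour in the decomposition step, specifically the degenerate ears. A chord with $l=0$ and $u\neq v$ gives $k_2=t<k$. A loop at $u$, where $l=0$ and $u=v$, gives $\mathcal{C}_{k_2}$ a loop with $t=k_2=1$. A closed ear at $u$ with $l\ge1$ gives $t=1$. We must check that each of these fits the definition of $\mathcal{B}^{t}_{k_1,k_2}$; all satisfy the constraints above. We must also justify that the ear decomposition exists starting from an arbitrary cycle. For this we use the standard argument: if $H\subsetneq G$ is strongly connected, either there is an edge of $G$ outside $H$ with both ends in $H$, or, by strong connectivity, there is a path leaving $H$ and returning to it with its interior outside $H$. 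In both cases this gives an ear, possibly closed with $u=v$, whose interior lies outside $H$. We would first take $\mathcal{C}$ to be any cycle and apply this argument once. The count $|E|-|V|=1$ then forces $H\cup\mathcal{E}=G$ immediately.
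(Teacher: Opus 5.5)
Your proof is correct. The paper does not reprove this lemma; it imports it from \cite{Klech}. Your argument is nonetheless the same ear-decomposition mechanism the paper uses in Corollary~\ref{butterfly-core}, applied one level down. Every ear raises $|E|-|V|$ by exactly one, so $G$ is a single cycle $\mathcal{C}$ plus one ear from $u$ to $v$. Then $\mathcal{C}$ and $\mathcal{E}\cup Q_{vu}$ witness the definition with $t=|V(Q_{vu})|$, $k_2=l+t$ and $k_1+k_2-t=m$.

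Two features of your write-up are worth keeping. First, you prove by hand that an ear can be attached to any strongly connected proper subgraph, starting from an arbitrary cycle, rather than only quoting Theorem~\ref{ucha}. This explicitly covers loops and closed ears, which matters because loops are allowed here. Second, your construction gives more than the bare definition asks for: $u$ is the unique vertex of out-degree $2$, and the common part of the two cycles is the directed path $Q_{vu}$. That is exactly the Head/Body/Wing structure the paper relies on afterwards. Your labelling may produce $k_1<k_2$ (e.g.\ a loop plus a closed ear), but swapping the two cycles is harmless.
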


\begin{figure}[H]
    \centering
    \includegraphics[width=0.8\textwidth]{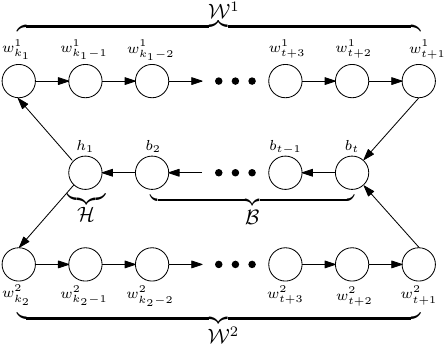} 
    \caption{Head ($\mathcal{H}$), Wings ($\mathcal{W}^1, \mathcal{W}^2$) and Body ($\mathcal{B}$) of $\mathcal{B}^{\,t}_{k_1,k_2}$.}
    \label{fig:arb_butterfly}
\end{figure}

\begin{theorem}[\cite{BangJensenGutin}]\label{ucha}
    Let $G$ be a digraph with at least two vertices. Then $G$ is strongly connected if and only if it admits an ear decomposition.
\end{theorem}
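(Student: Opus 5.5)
The statement is the classical ear-decomposition characterization of strong connectivity. I will prove the two implications separately, fixing the definition: an ear decomposition of $G$ is a sequence $G_0\subseteq G_1\subseteq\dots\subseteq G_r=G$. Here $G_0$ is a directed cycle (or a single vertex), and each $G_{i+1}$ is obtained from $G_i$ by adding an ear $\mathcal{E}_{i+1}$ whose endpoints $u,v$ lie in $V(G_i)$ and whose internal vertices are new. The case $u=v$ with $l\ge 1$ gives a closed ear, and an ear of length $l=0$ is a single new edge $(u,v)$, possibly a loop.

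\emph{Sufficiency.} I argue by induction on $i$ that every $G_i$ is strongly connected. A directed cycle is strongly connected. Suppose $G_i$ is strongly connected and we add an ear $u\,e_1\dots e_l\,v$.
\begin{itemize}
\item Any two old vertices remain mutually reachable inside $G_i$.
\item A new vertex $e_j$ reaches every vertex: follow the ear forward to $v$, then use paths in $G_i$.
\item A new vertex $e_j$ is reached from every vertex: go inside $G_i$ to $u$, then follow the ear up to $e_j$.
\end{itemize}
Hence $G_{i+1}$ is strongly connected, and so is $G_r=G$.

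\emph{Necessity.} Let $G$ be strongly connected with at least two vertices. It contains an edge $(x,y)$ with $x\neq y$. Strong connectivity gives a path from $y$ back to $x$, and shortening it to a simple path yields a directed cycle $G_0$ through $(x,y)$. Now let $H\subseteq G$ be a strongly connected subgraph already built from ears, and suppose $H\neq G$. There are two cases.
\begin{itemize}
\item If some edge $(u,v)$ of $G$ has both endpoints in $H$ but is not in $H$, add it as a trivial ear.
\item Otherwise $V(H)\neq V(G)$. Since $G$ is strongly connected, some edge $(u,w)$ leaves $H$, with $u\in V(H)$ and $w\notin V(H)$. Take a shortest path from $w$ to $V(H)$, ending at a vertex $v$. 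Minimality makes its vertices before $v$ distinct and outside $H$, so $u\to w\to\dots\to v$ is an ear, possibly with $u=v$.
\end{itemize}
Each step strictly increases $|E(H)|$, and the sufficiency argument shows the new $H$ is again strongly connected. Since $G$ is finite, the process terminates with $H=G$, which gives the ear decomposition.

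The main point requiring care is the ear construction in the second case. Choosing a \emph{shortest} return path is what guarantees the internal vertices are new and pairwise distinct, so that the added structure really is an ear in the sense of the paper's definition.
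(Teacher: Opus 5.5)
Your proof is correct. Sufficiency follows by induction over the ears. For necessity you start from a cycle, add missing edges between old vertices as trivial ears, and otherwise leave $H$ along an edge $(u,w)$ and return by a shortest path from $w$ to $V(H)$; this handles loops and closed ears properly.

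The paper gives no proof of its own and only cites Bang-Jensen and Gutin, whose standard argument is essentially the one you wrote.
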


\begin{corollary}\label{butterfly-core}
Let $G\in\mathcal{SC}_{m+2}(m)$. Then $G$ can be obtained from a
butterfly digraph by attaching an ear.
\end{corollary}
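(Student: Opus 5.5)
The plan is to use the ear decomposition of Theorem~\ref{ucha} and count edges. Let $G\in\mathcal{SC}_{m+2}(m)$. For $m=1$ there is at most one edge (a loop), so the class is empty; we may therefore assume $m\ge 2$. By Theorem~\ref{ucha}, $G$ admits an ear decomposition $G_0\subseteq G_1\subseteq\cdots\subseteq G_r=G$. Here $G_0$ is a directed cycle and each $G_{j}$ is obtained from $G_{j-1}$ by attaching an ear $\mathcal{E}_j$ between two (not necessarily distinct) vertices of $G_{j-1}$.

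The key bookkeeping is the quantity $|E|-|V|$. For a cycle, $|E(G_0)|-|V(G_0)|=0$. An ear of length $l$ in the paper's convention contributes $l$ new vertices and $l+1$ new edges, so each ear increases $|E|-|V|$ by exactly $1$. Since $|E(G)|-|V(G)|=2$, we get $r=2$.

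Put $H=G_1$, obtained from the cycle $G_0$ by a single ear. We must check that $H\in\mathcal{SC}_{m'+1}(m')$ with $m'=|V(H)|$.
\begin{itemize}
\item $H$ is strongly connected, because attaching an ear to a strongly connected digraph preserves strong connectivity: every internal vertex is reachable from and reaches the ear's endpoints.
\item By the edge count, $|E(H)|=|V(H)|+1$.
\end{itemize}
Lemma~\ref{lem:m+1_is_butterfly} then gives $H\cong\mathcal{B}^{\,t}_{k_1,k_2}$ for some $t,k_1,k_2$. Finally, $G=G_2$ is obtained from $H$ by attaching the single ear $\mathcal{E}_2$, which is the claim.

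The only delicate point is that the decomposition must respect the standing conventions. First, ears may have length $l=0$, a single edge whose endpoints may coincide, giving a loop. Second, no ear may create a multiple edge. Both hold because $G$ itself has no multiple edges, and every $G_j$ is a subgraph of $G$. One should also make sure that the version of the ear decomposition from \cite{BangJensenGutin} allows closed ears ($u=v$) and trivial ears. If the cited theorem is stated only for ears with distinct endpoints, the loops of $G$ must be handled separately. The fix is to remove the loops first, apply the theorem to the loopless strongly connected remainder, and then add each loop back as an ear of length $0$ from a vertex to itself; each such loop still raises $|E|-|V|$ by one. With that remark the counting argument goes through unchanged, and I expect no further obstacles.
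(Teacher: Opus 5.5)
Your proposal is correct and is essentially the paper's proof. The paper removes the last ear of the decomposition from Theorem~\ref{ucha} and observes that the remainder has $m-l$ vertices and $m-l+1$ edges, hence is a butterfly by Lemma~\ref{lem:m+1_is_butterfly}; your $|E|-|V|$ bookkeeping giving $r=2$ is the same count. The paper passes over the trivial case $m=1$ and the handling of loops (removing them before invoking the loopless ear-decomposition theorem and re-adding them as closed ears of length $0$), so your treatment of these points is slightly more careful.
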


\begin{proof}
By Theorem~\ref{ucha}, $G$ admits an ear decomposition.
Let $\mathcal{E}$ be the last ear in this decomposition, and suppose that
$\mathcal{E}$ has $l$ internal vertices. Removing the edges and the
internal vertices of $\mathcal{E}$, we obtain a strongly connected digraph
$G'$ satisfying
\[
|V(G')|=m-l
\qquad\text{and}\qquad
|E(G')|=(m+2)-(l+1)=m-l+1.
\]
Hence, writing $m'=m-l$, we have
\[
G'\in\mathcal{SC}_{m'+1}(m').
\]
By \cite[Lemma~4.3.]{Klech}, it follows that
\[
G'\cong\mathcal{B}_{k_1,k_2}^{\,t}
\]
for some admissible parameters $t,k_1,k_2$.
\end{proof}

To analyze the spectral radius of digraphs in $\mathcal{SC}_{m+2}(m)$, we classify the possible positions of the ear $\mathcal{E}$ relative to the butterfly core $\mathcal{B}_{k_1, k_2}^{\,t}$.
We partition the vertex set $V(\mathcal{B}_{k_1, k_2}^{\,t})$ into four regions (subsets) as in Figure~\ref{fig:arb_butterfly}:
\begin{enumerate}
    \item The \textit{First Wing} ($\mathcal{W}^1$): vertices exclusive to the first cycle ($|\mathcal{W}^1| = k_1 - t$).
    \item The \textit{Second Wing} ($\mathcal{W}^2$): vertices exclusive to the second cycle ($|\mathcal{W}^2| = k_2 - t$).
    \item The \textit{Head} ($\mathcal{H}$): the only vertex with out-degree $2$ ($|\mathcal{H}|=1$).
    \item The \textit{Body} ($\mathcal{B}$): vertices in the intersection with out-degree $1$ ($|\mathcal{B}| = t-1$).
\end{enumerate}
We now list all possible ordered types of pairs of regions:
\[
\begin{aligned}
&
\mathcal{W}^1\mathcal{W}^1,\ 
\mathcal{W}^1\mathcal{W}^2,\ 
\mathcal{W}^1\mathcal{B},\ 
\mathcal{W}^1\mathcal{H},\\
&
\mathcal{W}^2\mathcal{W}^1,\ 
\mathcal{W}^2\mathcal{W}^2,\ 
\mathcal{W}^2\mathcal{B},\ 
\mathcal{W}^2\mathcal{H},\\
&
\mathcal{B}\mathcal{W}^1,\ 
\mathcal{B}\mathcal{W}^2,\ 
\mathcal{B}\mathcal{B},\ 
\mathcal{B}\mathcal{H},\\
&
\mathcal{H}\mathcal{W}^1,\ 
\mathcal{H}\mathcal{W}^2,\ 
\mathcal{H}\mathcal{B},\ 
\mathcal{H}\mathcal{H}.
\end{aligned}
\]

Let 
\[
\mathcal{\mathcal{X}},\mathcal{Y}\in\{\mathcal{W}^1,\mathcal{W}^2,\mathcal{B},\mathcal{H}\}.
\]
If $x_i\in \mathcal{X}$ and $y_j\in \mathcal{Y}$, then by
\[
\left(\mathcal{X}_i\mathcal{Y}_j\right)_l\text{-}\mathcal{B}_{k_1,k_2}^{\,t}
\]
we denote the digraph obtained from the butterfly digraph
$\mathcal{B}_{k_1,k_2}^{\,t}$ by inserting an ear $\mathcal{E}$ of length $l$ from the vertex $\ x_i$ to the vertex $y_j$.

\begin{figure}[H]
    \centering
    \includegraphics[width=0.6\textwidth]{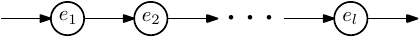} 
    \caption{Ear $\mathcal{E}$ of length $l$.}
    \label{fig:ear}
\end{figure}

For clarity, we also distinguish the above types according to whether the ear is inserted between two different regions or within the same region. Types of the first kind will be called \emph{cross-region types}, while types of the second kind will be called \emph{intra-region types}. For intra-region types, we must further distinguish whether the ear is inserted
in the direction of the edges of the original region or in the opposite direction.
These two situations lead to different recurrence relations, from which we will
derive the corresponding generating functions. 

Thus, let $\mathcal{X}$ be a region and let $x_i,x_j\in \mathcal{X}$. The digraph obtained from
$\mathcal{B}_{k_1,k_2}^{\,t}$ by inserting an ear $\mathcal{E}$ of length
$l$ from the vertex $x_i$ to the vertex $x_j$ in the direction of the edges
of the original region will be denoted by
\[
\left(\mathcal{X}_i\mathcal{X}_j\right)_l^+\text{-}\mathcal{B}_{k_1,k_2}^{\,t}.
\]
Conversely, the digraph obtained by inserting an ear $\mathcal{E}$ of length
$l$ from the vertex $x_i$ to the vertex $x_j$ against the direction of the
edges of the original region will be denoted by
\[
\left(\mathcal{X}_i\mathcal{X}_j\right)_l^-\text{-}\mathcal{B}_{k_1,k_2}^{\,t}.
\]
Both situations are illustrated in Figures~\ref{fig:+dir} and \ref{fig:-dir}, where the wavy edge denotes the ear introduced in Figure~\ref{fig:ear}.
The ear can also be inserted as a cycle, that is, from a vertex $x_i$ back to the same vertex $x_i$. We denote this case by
\[
\left(\mathcal{X}_i\mathcal{X}_i\right)_l^0
\text{-}\mathcal{B}_{k_1,k_2}^{\,t}.
\]

\begin{figure}[H]
    \centering
    \includegraphics[width=0.6\textwidth]{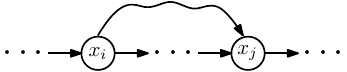} 
    \caption{Insertion of an ear in the direction of the original region $\mathcal{X}$.}
    \label{fig:+dir}
\end{figure}

\begin{figure}[H]
    \centering
    \includegraphics[width=0.6\textwidth]{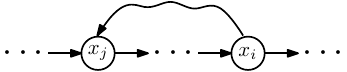} 
    \caption{Insertion of an ear against the direction of the original region $\mathcal{X}$.}
    \label{fig:-dir}
\end{figure}

Observe that for the type
\[
\left(\mathcal{H}_i\mathcal{H}_j\right)_l\text{-}\mathcal{B}_{k_1,k_2}^{\,t}
\]
we necessarily have $i=j=1$, since $\mathcal{H}=\{h_1\}$. Hence, in this case,
there is no need to distinguish the direction in which the ear is inserted. For the same reason, whenever a type involves the region $\mathcal{H}$, the
corresponding index $i$, respectively $j$, is redundant. Hence, no subscript
will be attached to $\mathcal{H}$ in our notation.
\\
\\
\textbf{Remark.}
For specific digraphs of type
\[
\left(\mathcal{X}_i\mathcal{X}_j\right)_l\text{-}\mathcal{B}_{k_1,k_2}^{\,t},
\]
that is, for fixed values of $i$ and $j$, the direction in which the ear is inserted can also be determined from the relation between the parameters $i$ and $j$. If $i>j$, then the ear is inserted in the direction of the original region. Conversely, if $i<j$, then the ear is inserted against the direction of the original region. If $i=j$, then the inserted ear forms a new cycle from the vertex $\mathcal{X}_i$ back to itself. Since, however, we are currently working with general parameters, we use the superscripts $+$, $-$ and $0$ to indicate the direction explicitly.
\\

In the following summary, we omit the suffix
$\text{-}\mathcal{B}_{k_1,k_2}^{\,t}$ from the notation, since it is the same
for all types considered here. We now list all possible types of digraphs
obtained by inserting an ear of length $l$ into
$\mathcal{B}_{k_1,k_2}^{\,t}$ in the following table.

\begin{table}[H]
\centering
\renewcommand{\arraystretch}{1.2}
\[
\begin{array}{c|c}
\text{Cross-region types} & \text{Intra-region types} \\
\hline
\hline
(\mathcal{W}^1_i\mathcal{W}^2_j)_l & (\mathcal{W}^1_i\mathcal{W}^1_j)^+_l \\
(\mathcal{W}^2_i\mathcal{W}^1_j)_l & (\mathcal{W}^1_i\mathcal{W}^1_j)^-_l \\
(\mathcal{W}^1_i\mathcal{B}_j)_l & (\mathcal{W}^2_i\mathcal{W}^2_j)^+_l \\
(\mathcal{W}^2_i\mathcal{B}_j)_l & (\mathcal{W}^2_i\mathcal{W}^2_j)^-_l \\
(\mathcal{W}^1_i\mathcal{H})_l & (\mathcal{B}_i\mathcal{B}_j)^+_l \\
(\mathcal{W}^2_i\mathcal{H})_l & (\mathcal{B}_i\mathcal{B}_j)^-_l \\
(\mathcal{B}_i\mathcal{W}^1_j)_l & (\mathcal{H}\mathcal{H})_l \\
(\mathcal{B}_i\mathcal{W}^2_j)_l &  (\mathcal{W}^1_i\mathcal{W}^1_i)^0_l\\
(\mathcal{B}_i\mathcal{H})_l &  (\mathcal{W}^2_i\mathcal{W}^2_i)^0_l\\
(\mathcal{H}\mathcal{W}^1_j)_l &  (\mathcal{B}_i\mathcal{B}_i)^0_l\\
(\mathcal{H}\mathcal{W}^2_j)_l &  \\
(\mathcal{H}\mathcal{B}_j)_l & 
\end{array}
\]
\caption{A combinatorial classification of ear insertions of length $l$.}
\label{tab:ear-types}
\end{table}

\begin{lemma}
    Let $G\in\mathcal{SC}_{m+2}(m)$, and let $l\geq 0$. By the classification
    above, $G$ is obtained from a butterfly core
    $\mathcal{B}_{k_1,k_2}^{\,t}$ by inserting an ear of length $l$. Then the
    parameters $t,k_1,k_2$, and $l$ satisfy
    \begin{equation}\label{params}
        k_1+k_2-t+l=m.
    \end{equation}
\end{lemma}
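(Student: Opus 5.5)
The plan is a direct vertex count, combining Corollary~\ref{butterfly-core} with the definition of the butterfly digraph. By Corollary~\ref{butterfly-core}, there is a strongly connected subdigraph $G'\cong\mathcal{B}^{\,t}_{k_1,k_2}$ of $G$. Then $G$ is recovered from $G'$ by inserting a single ear $\mathcal{E}$ with $l$ internal vertices and $l+1$ edges.

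First I count the vertices of the core. By the definition of $\mathcal{B}^{\,t}_{k_1,k_2}$, we have $k_1+k_2-t=|V(G')|$. This also follows from inclusion--exclusion applied to the two cycles $\mathcal{C}_{k_1}$ and $\mathcal{C}_{k_2}$, which share exactly $t$ vertices. In the notation of the proof of Corollary~\ref{butterfly-core}, $|V(G')|=m'=m-l$. I will note that the two cycles cover all of $V(G')$. This is because the definition imposes $k_1+k_2-t=m'$ on the butterfly in $\mathcal{SC}_{m'+1}(m')$. Equivalently, it matches the partition into $\mathcal{W}^1,\mathcal{W}^2,\mathcal{B},\mathcal{H}$, whose sizes $(k_1-t)+(k_2-t)+(t-1)+1$ sum to $k_1+k_2-t$.

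Second, I account for the ear. By definition, its internal vertices $e_1,\dots,e_l$ are new vertices, disjoint from $V(G')$, and its endpoints are identified with existing vertices $x_i,y_j\in V(G')$. Hence
\[
|V(G)|=|V(G')|+l=(k_1+k_2-t)+l .
\]
Since $|V(G)|=m$, this gives \eqref{params}.

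As a consistency check, the edge count is $|E(G)|=(m'+1)+(l+1)=m+2$, in agreement with $G\in\mathcal{SC}_{m+2}(m)$. The case $l=0$ is included: then $\mathcal{E}$ is a single edge, which is not a loop or parallel edge duplicating one already present since multiple edges are excluded, and $G$ has the same vertex set as the core.

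The only point requiring care is that the internal vertices of the ear are genuinely distinct from the core vertices and from each other. This is implicit in the ear decomposition used in Corollary~\ref{butterfly-core}, where removing the last ear deletes exactly $l$ vertices. Otherwise the argument is pure bookkeeping.
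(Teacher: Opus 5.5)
Your proposal is correct and follows the same route as the paper: the butterfly core has $k_1+k_2-t$ vertices, the ear adds exactly $l$ new internal vertices, and setting the total equal to $|V(G)|=m$ gives the relation. Your extra remarks are consistent with the paper's proof but not needed for it; these are the edge-count check $|E(G)|=(m-l+1)+(l+1)=m+2$ and the observation that $k_1+k_2-t=m'=m-l$ already follows from the proof of Corollary~\ref{butterfly-core}.
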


\begin{proof}
The core of $G$ is the butterfly digraph
$\mathcal{B}_{k_1,k_2}^{\,t}$. Its number of vertices is
\[
    k_1+k_2-t.
\]
Since $G$ is obtained from this core by inserting an ear of length $l$,
and this insertion adds exactly $l$ new vertices, the resulting digraph has
\[
    k_1+k_2-t+l
\]
vertices. On the other hand, $G\in\mathcal{SC}_{m+2}(m)$, and hence $G$
has exactly $m$ vertices. Therefore
\[
    k_1+k_2-t+l=m.
\]

\end{proof}

\section{Topological polynomials associated with digraph types in $\mathcal{SC}_{m+2}(m)$}
In what follows, we again omit the suffix $\text{-}\mathcal{B}_{k_1,k_2}^{\,t}$ ($\text{-}\mathcal{B}_{k_s,k_r}^{\,t}$) from the notation. We also refer the reader to Figure~\ref{fig:arb_butterfly}, which provides the reference structure for the butterfly core. The figures corresponding to the individual types are only schematic, and this reference figure should help the reader identify the relevant parts of the core. In the following figures, the wavy edge denotes the ear introduced in Figure~\ref{fig:ear}. 

The topological equation is derived from the recurrence relations associated
with the individual vertices of the digraph. For a vertex $u_i^r$, we denote
the corresponding generating function, and hence its recurrence relation,
by the capital letter with the same superscript and subscript, that is, by
$U_i^r$.

As described in detail in our previous work~\cite[Section~3]{Klech},
these recurrence relations are obtained directly from the outgoing edges of
each vertex. In general, if a vertex $u$ has out-neighbors
$v_1,\ldots,v_d$, and if $U,V_1,\ldots,V_d$ denote the corresponding
generating functions, then
\[
U=1+zV_1+\cdots+zV_d.
\]
Thus, each outgoing edge from $u$ to a vertex $v_j$ contributes the term
$zV_j$ to the recurrence relation associated with $u$. We use this rule
throughout this section without further comment.

For each type, it is also necessary to determine its realizability conditions.
Thus, let
\[
    \left(\mathcal{X}_i\mathcal{Y}_j\right)_l
\]
be one of the types described above. In order for this type to be realizable, we require
\[
    |\mathcal{X}|\geq 1
    \qquad\text{and}\qquad
    |\mathcal{Y}|\geq 1.
\]
In some cases, stronger conditions may be needed. Whenever this occurs, we will state these additional requirements explicitly.

We now describe the general procedure that will be used for each type:
\begin{enumerate}
    \item We provide a schematic figure of the given type.

    \item Using Lemma~\ref{recurence}, we derive the recurrence relations associated
    with the given type.

    \item We solve these recurrence relations with respect to the vertex $h_1$.
    In this way, we obtain the generating function $H_1(z)$. We then use
    Lemma~\ref{lem:topological_equation_from_vertex_gf} to identify the
    corresponding topological polynomial. The verification that $H_1(z)$ has
    the required form of the function $U(z)$ from
    Lemma~\ref{lem:topological_equation_from_vertex_gf} is routine and will not
    be repeated for each type separately. Indeed, in all cases considered below,
    the required form of $H_1(z)$, as well as the corresponding properties of
    the polynomial $\Phi(z)$, are apparent from the obtained expression.

    \item We determine the topological polynomial $\Phi(z)$.

    \item We perform the first $(i,j)$-optimization step for the parameters. More precisely,
    we optimize the parameters $i$ and $j$, which determine the positions of
    the inserted ear. After performing this optimization, we will indicate the relevant realizability conditions alongside each corresponding $(i,j)$\textit{-optimized} topological polynomial.

    \item Under the assumption $k_1\geq k_2$, we perform the second $(s,r)$-optimization
    step for the parameters $k_s$ and $k_r$. In this step, we optimize the choice of
    $s$ and $r$, where $s,r\in\{1,2\}$. The result will be an $(s,r,i,j)$\textit{-optimized} topological polynomial.
\end{enumerate}

\noindent\textbf{Remark.} Although the regions $\mathcal{H}$ and $\mathcal{B}$ together form the common part of the two
cycles of the butterfly digraph, we treat them separately in the
classification. When $\mathcal{H}$ or $\mathcal{B}$ occurs as the terminal region of an ear,
the corresponding types can often be described by a common recurrence
pattern, with $h_1$ playing the role of a boundary vertex of the body.
However, this is no longer the case when the ear starts at $h_1$.
Indeed, since $h_1$ is the unique vertex of out-degree $2$, an ear starting
at $h_1$ introduces an additional term directly into the recurrence relation
for $H_1$, whereas an ear starting at a vertex $b_i\in B$ modifies the
recurrence relation associated with $B_i$. We therefore keep $\mathcal{H}$ and $\mathcal{B}$
as distinct regions throughout the analysis, which avoids introducing
additional exceptional cases into the individual attachment types.

\subsection{Cross-region types}

In this section, we gradually derive the $(s,r,i,j)$-optimized
topological polynomials for the cross-region types. We begin with the type
\[
    (\mathcal{W}^s_{i}\mathcal{B}_{j})_l
\]
which serves as a model example for the general optimization procedure. Since
the arguments used to justify the optimal choices of parameters are almost the
same for all cross-region types, we provide the full optimization argument only
in this first case. In the subsequent cases, we record the relevant conditions,
the optimal parameter choices, and the resulting optimized topological
polynomial, without repeating the same reasoning in detail.

\medskip
Consider the topological polynomial associated with digraphs of type
\begin{equation*}
    (\mathcal{W}^s_{i}\mathcal{B}_{j})_l, \quad s\in\{1,2\}.
\end{equation*}

\begin{figure}[ht]
    \centering
    \includegraphics[width=0.5\textwidth]{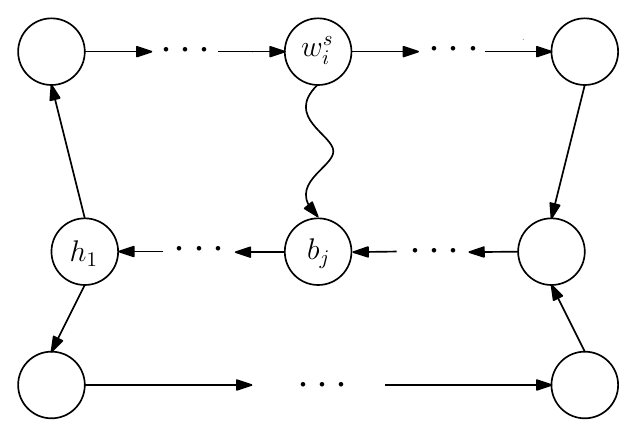}
    \caption{
    \((\mathcal{W}^s_{i}\mathcal{B}_{j})_l
    \text{-}\mathcal{B}_{k_s,k_r}^{\,t}\).}
    \label{fig:WB}
\end{figure}

\medskip

\noindent
\begin{minipage}[t]{0.4\textwidth}
\small
\textit{Recurrence relations.}
\begin{equation*}
\begin{aligned}
    H_1&=1+zW^s_{k_s}+zW^r_{k_r}, \\
    W^s_{k_s}&=\frac{1-z^{k_s-i}}{1-z}+z^{k_s-i}W^s_i, \\
    W^s_i&=1+zW^s_{i-1}+zE_1, \\
    E_1&=\frac{1-z^l}{1-z}+z^lB_j, \\
    W^s_{i-1}&=\frac{1-z^{i-2}}{1-z}+z^{i-2}H_1, \\
    W^r_{k_r}&=\frac{1-z^{k_r-1}}{1-z}+z^{k_r-1}H_1, \\
    B_j&=\frac{1-z^{j-1}}{1-z}+z^{j-1}H_1.
\end{aligned}
\end{equation*}
\end{minipage}
\hfill
\begin{minipage}[t]{0.6\textwidth}
\small
\textit{Solution for $H_1$.}
\begin{equation*}
H_1
=
\frac{
1+z-z^{k_s}-z^{k_r}
+z^{k_s-i+2}
-z^{k_s-i+l+j+1}
}{
(1-z)\left(
1-z^{k_s}-z^{k_r}
-z^{k_s-i+l+j+1}
\right)
}.
\end{equation*}

\vspace{1em}

\textit{Topological polynomial.}
\begin{equation*}
    T\left((\mathcal{W}^s_i\mathcal{B}_j)_l;z\right)
    =
    1-z^{k_s}-z^{k_r}
    -z^{k_s-i+l+j+1}.
\end{equation*}
\end{minipage}

\medskip

Since $\rho(G)=R^{-1}$, minimizing the spectral radius is equivalent to maximizing the root $R\in(0,1)$ of the corresponding topological equation. Therefore, we need to maximize the exponent
\[
    k_s-i+l+j+1.
\]
Equivalently, we need to maximize the difference $j-i$. Since
\[
i\in\{k_s,k_s-1,\dots,t+1\} \quad \text{and} \quad j\in\{t,t-1,\dots,2\},
\]
the optimal choice of the parameters is $i=t+1$ and $j=t$. However, this choice is valid only when $l\neq 0$. If $l=0$, then the inserted ear is just a single edge, and the above choice of $i$ and $j$ is not possible, since the edge joining the vertices $w^s_{t+1}$ and $b_t$ already belongs to the original digraph
$\mathcal{B}_{k_s,k_r}^{\,t}$ (we will use this observation for several other types below, without mentioning it explicitly each time). Therefore, one of the indices has to be shifted.

In this case, the optimal choices are
\[
(i,j)=(t+2,t)
\qquad\text{or}\qquad
(i,j)=(t+1,t-1).
\]
Both choices give $j-i=-2$ and hence they are equivalent with respect to their contribution to the exponent $k_s-i+l+j+1$. The realizability conditions, however, are different:
\begin{align*}
    (i,j)&=(t+2,t): && |\mathcal{W}^s|\geq 2,\; |\mathcal{B}|\geq 1\\
    (i,j)&=(t+1,t-1): && |\mathcal{W}^s|\geq1,\; |\mathcal{B}|\geq 2.
\end{align*}

Thus, we obtain the following $(i,j)$-optimized equations. For $l\neq 0$, we have
\begin{equation}\label{2}
    T\left((\mathcal{W}^s_{t+1}\mathcal{B}_t)_{l\neq0};z\right)
    =
    1-z^{k_s}-z^{k_r}-z^{k_s+l}, \quad |\mathcal{W}^s|\geq 1,\; |\mathcal{B}|\geq 1. 
\end{equation}
For $l=0$, we obtain
\begin{equation}
\begin{aligned}\label{(3)}
    T\left((\mathcal{W}^s_{t+2}\mathcal{B}_t)_0;z\right) =&\;1-z^{k_s}-z^{k_r}-z^{k_s-1}, \quad|\mathcal{W}^s|\geq 2,\; |\mathcal{B}|\geq 1,  \\
    T\left((\mathcal{W}^s_{t+1}\mathcal{B}_{t-1})_0;z\right) =&\;1-z^{k_s}-z^{k_r}-z^{k_s-1}, \quad|\mathcal{W}^s|\geq 1,\; |\mathcal{B}|\geq 2.
\end{aligned}
\end{equation}

We now optimize the parameters $s$ and $r$. In the last term of
equations \eqref{2} and \eqref{(3)}, the parameter $k_s$ appears in the
exponent with a positive contribution. Since we want this exponent to be maximized, the assumption $k_1\geq k_2$ implies that the optimal
choice is $s=1$ and $r=2$. Hence, for $l\neq 0$, we have
\begin{equation*}
    T\left((\mathcal{W}^1_{t+1}\mathcal{B}_t)_{l\neq0};z\right)
    =
    1-z^{k_1}-z^{k_2}-z^{k_1+l}=0, \quad |\mathcal{W}^1|\geq 1,\; |\mathcal{B}|\geq 1.
\end{equation*}
For $l=0$, we obtain
\begin{equation*}
\begin{aligned}
    T\left((\mathcal{W}^1_{t+2}\mathcal{B}_t)_0;z\right) =&\;1-z^{k_1}-z^{k_2}-z^{k_1-1}=0, \quad|\mathcal{W}^1|\geq 2,\; |\mathcal{B}|\geq 1,  \\
    T\left((\mathcal{W}^1_{t+1}\mathcal{B}_{t-1})_0;z\right) =&\;1-z^{k_1}-z^{k_2}-z^{k_1-1}=0, \quad|\mathcal{W}^1|\geq 1,\; |\mathcal{B}|\geq 2.
\end{aligned}
\end{equation*}

\begin{lemma}\label{lem:WB_l=0}
For $l\geq 1$, we have
\[\label{eq:izom-WB}
(\mathcal{W}^1_{t+1}\mathcal{B}_t)_{l}\text{-}\mathcal{B}_{k_1,k_2}^{\,t}
\cong
(\mathcal{W}^1_{t+1+l}\mathcal{B}_t)_{0}\text{-}\mathcal{B}_{k_1+l,k_2}^{\,t}.
\]
\end{lemma}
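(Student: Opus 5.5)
The isomorphism will be built directly as a relabelling of vertices. We fix the labelling of $\mathcal{B}^{\,t}_{k_1,k_2}$ implicit in the recurrence relations above. The head is $h_1$. Its two out-edges go to $w^1_{k_1}$ and $w^2_{k_2}$. The first cycle is
\[
h_1\to w^1_{k_1}\to w^1_{k_1-1}\to\cdots\to w^1_{t+1}\to b_t\to b_{t-1}\to\cdots\to b_2\to h_1 ,
\]
and the second cycle is
\[
h_1\to w^2_{k_2}\to\cdots\to w^2_{t+1}\to b_t\to\cdots\to b_2\to h_1 .
\]
In $G=(\mathcal{W}^1_{t+1}\mathcal{B}_t)_l\text{-}\mathcal{B}^{\,t}_{k_1,k_2}$ there is in addition the ear
\[
w^1_{t+1}\to e_1\to e_2\to\cdots\to e_l\to b_t .
\]
So $w^1_{t+1}$ is the only vertex besides $h_1$ of out-degree $2$. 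Its out-neighbours are $b_t$ (the original edge) and $e_1$ (the ear).

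The key observation is that this graph is the union of the path $h_1\to w^1_{k_1}\to\cdots\to w^1_{t+1}$ with two internally disjoint paths from $w^1_{t+1}$ to $b_t$. One of them is the direct edge, of length $1$. The other is the ear, of length $l+1$. These two paths play symmetric roles, so we may regard the ear as the continuation of the wing and the direct edge as the new ear of length $0$. Concretely, we define $\phi:V(G)\to V(H)$, where
\[
H=(\mathcal{W}^1_{t+1+l}\mathcal{B}_t)_0\text{-}\mathcal{B}^{\,t}_{k_1+l,k_2}.
\]
The map $\phi$ fixes $h_1$, all $b_j$ and all $w^2_j$. It sends $w^1_j\mapsto w^1_{j+l}$ for $t+1\le j\le k_1$, and it sends $e_p\mapsto w^1_{t+1+l-p}$ for $1\le p\le l$. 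Both graphs have $k_1+k_2-t+l$ vertices, and $\phi$ is a bijection. In $H$ the first wing has $k_1+l-t$ vertices $w^1_{t+1},\dots,w^1_{k_1+l}$, and the ear of length $0$ is the single edge $w^1_{t+1+l}\to b_t$.

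Next we check edges, splitting them into groups.
\begin{enumerate}
\item Edges inside the second cycle, and the body path $b_t\to\cdots\to b_2\to h_1$, are fixed by $\phi$.
\item The edge $h_1\to w^1_{k_1}$ goes to $h_1\to w^1_{k_1+l}$.
\item The wing edges $w^1_{j}\to w^1_{j-1}$ for $j\ge t+2$ go to $w^1_{j+l}\to w^1_{j+l-1}$.
\item The ear edges $w^1_{t+1}\to e_1$, $e_p\to e_{p+1}$ and $e_l\to b_t$ go to $w^1_{t+1+l}\to w^1_{t+l}$, then $w^1_{t+1+l-p}\to w^1_{t+l-p}$, and finally $w^1_{t+1}\to b_t$. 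Together these form the remaining wing edges of $H$ and the edge from the wing into the body.
\item The original edge $w^1_{t+1}\to b_t$ goes to $w^1_{t+1+l}\to b_t$, which is exactly the added ear of length $0$ in $H$.
\end{enumerate}
Both graphs have $m+2$ edges, and $\phi$ maps edges injectively to edges. Hence it maps $E(G)$ onto $E(H)$, so $\phi$ is an isomorphism.

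The only delicate point is bookkeeping, not mathematics. We must check that the parameters of $H$ are admissible. Its first cycle has length $k_1+l$, it meets the second cycle in $t$ vertices, and $t\le\min\{k_1+l,k_2\}$ holds. The vertex count $k_1+l+k_2-t=m$ agrees with \eqref{params}. We must also check that the ear in $H$ is not a duplicate edge. This holds because $l\ge1$ gives $t+1+l\ne t+1$, so the edge $w^1_{t+1+l}\to b_t$ is not already an edge of $\mathcal{B}^{\,t}_{k_1+l,k_2}$. This is where the hypothesis $l\ge 1$ is used.

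As a sanity check, the topological polynomial of $H$ from \eqref{(3)} with $k_1$ replaced by $k_1+l$ is
\[
1-z^{k_1+l}-z^{k_2}-z^{k_1+l-1}.
\]
Substituting $i=t+1+l$, $j=t$ and ear length $0$ into the general formula for $(\mathcal{W}^1_i\mathcal{B}_j)$, we get the exponent $(k_1+l)-(t+1+l)+0+t+1=k_1$. This gives $1-z^{k_1+l}-z^{k_2}-z^{k_1}$, which matches \eqref{2}.
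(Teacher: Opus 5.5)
Your proof is correct and is essentially the paper's proof. You use exactly the same relabelling $\phi$ (sending $w^1_j\mapsto w^1_{j+l}$ and $e_p\mapsto w^1_{t+1+l-p}$), and your edge-by-edge check together with the count $|E(G)|=|E(H)|=m+2$ makes explicit what the paper asserts follows ``directly from the definition of the edges.''

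One small slip is in your optional sanity check. The $l=0$ polynomial $1-z^{k_s}-z^{k_r}-z^{k_s-1}$ applies to $i=t+2$, so it describes $H$ only when $l=1$. For general $l$ the polynomial of $H$ is $1-z^{k_1+l}-z^{k_2}-z^{k_1}$, as your second computation correctly shows, not $1-z^{k_1+l}-z^{k_2}-z^{k_1+l-1}$.
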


\begin{proof}
Let
\[
G=(\mathcal{W}^1_{t+1}\mathcal{B}_t)_{l}\text{-}\mathcal{B}_{k_1,k_2}^{\,t}
\]
and
\[
H=(\mathcal{W}^1_{t+1+l}\mathcal{B}_t)_{0}\text{-}\mathcal{B}_{k_1+l,k_2}^{\,t}.
\]
We define a map
\[
\phi:V(G)\to V(H)
\]
by
\[
\phi(v)=
\begin{cases}
w^1_{\alpha+l}, & \text{if } v=w^1_\alpha,\ \alpha=t+1,t+2,\dots,k_1,\\
w^1_{t+l+1-\alpha}, & \text{if } v=e_\alpha,\ \alpha=1,\dots,l,\\
v, & \text{otherwise}.
\end{cases}
\]
The map $\phi$ is bijective. Moreover, it maps the vertices 
\[
e_l,e_{l-1},\dots,e_1
\]
onto
\[
w^1_{t+1},w^1_{t+2},\dots,w^1_{t+l},
\]
respectively, and shifts the original vertices $w^1_{k_1},\dots,w^1_{t+1}$ to
$w^1_{k_1+l},\dots,w^1_{t+l+1}$. All remaining vertices are fixed.

It follows directly from the definition of the edges in the two digraphs that
\[
(u,v)\in E(G)
\quad\Longleftrightarrow\quad
(\phi(u),\phi(v))\in E(H).
\]
Hence $\phi$ is a digraph isomorphism, and therefore $G\cong H$.
\end{proof}

\begin{corollary}\label{cor:WB_l0}
To find the minimum spectral radius in the class of digraphs
\[
(\mathcal{W}_i^s\mathcal{B}_j)_l,
\]
it is sufficient to consider only the case $l=0$.
\end{corollary}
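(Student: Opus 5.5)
The plan is to use Lemma~\ref{lem:WB_l=0} to reduce every $(i,j)$-optimized digraph with $l\geq1$ to a digraph with $l=0$ inside the same family $(\mathcal{W}^s_i\mathcal{B}_j)_l$. Since isomorphic digraphs have the same spectral radius, the minimum over the family is then attained (or at least achieved) among digraphs with $l=0$.

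First I fix $m$ and an arbitrary digraph $G=(\mathcal{W}^s_i\mathcal{B}_j)_l\text{-}\mathcal{B}^{\,t}_{k_s,k_r}$ in the family, with $l\geq 1$. By the $(i,j)$- and $(s,r)$-optimization carried out above, replacing $G$ by $G_{\mathrm{opt}}=(\mathcal{W}^1_{t+1}\mathcal{B}_t)_l\text{-}\mathcal{B}^{\,t}_{k_1,k_2}$ with the same $t,k_1,k_2,l$ does not decrease the root $R$. This is because the exponent $k_s-i+l+j+1$ in the last term of the topological polynomial is maximized by this choice. The monotonicity fact I rely on is that $1-z^{a}-z^{b}-z^{c}$ is strictly decreasing on $(0,1)$, and increasing any exponent increases the polynomial pointwise on $(0,1)$. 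Hence its unique root moves right, so $\rho(G_{\mathrm{opt}})\leq\rho(G)$.

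Second, I apply Lemma~\ref{lem:WB_l=0}, which gives $G_{\mathrm{opt}}\cong(\mathcal{W}^1_{t+1+l}\mathcal{B}_t)_0\text{-}\mathcal{B}^{\,t}_{k_1+l,k_2}$. I check that the right-hand side lies in $\mathcal{SC}_{m+2}(m)$ with the same $m$: by \eqref{params}, $(k_1+l)+k_2-t+0=m$. The realizability conditions $|\mathcal{W}^1|=k_1+l-t\geq 1$ and $|\mathcal{B}|\geq1$ still hold. As a consistency check, the topological polynomials agree. The $l=0$ formula with $i=t+1+l$ and $j=t$ gives last exponent $(k_1+l)-(t+1+l)+0+t+1=k_1$. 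The other terms are $1-z^{k_1+l}-z^{k_2}$. This matches $1-z^{k_1}-z^{k_2}-z^{k_1+l}$ from \eqref{2} up to the order of terms, so the root $R$ and hence $\rho$ coincide.

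Therefore every digraph in the family with $l\geq1$ has spectral radius at least that of some $l=0$ member, so the minimum can be searched among $l=0$ digraphs only. The main obstacle is making sure the isomorphic image is an admissible $l=0$ digraph, that is, not a duplicated edge. This holds because $i-j=l+1\geq 2$, so the inserted edge $w^1_{t+1+l}\to b_t$ is not the core edge $w^1_{t+1}\to b_t$. That is exactly the point where the $l=0$ analysis required shifting an index.
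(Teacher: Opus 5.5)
Your proposal is correct and follows essentially the paper's argument: use Lemma~\ref{lem:WB_l=0} to replace each optimized $l\geq 1$ digraph by an isomorphic member of the same family with $l=0$, so it cannot beat the minimum over the $l=0$ subclass. The differences are only in presentation. You make explicit the monotonicity reduction of an arbitrary $l\geq1$ digraph to the optimized one, and you check the admissibility of the image (same $m$, realizability, no duplicated edge). The paper instead compares the image's index pair $(t+1+l,t)$ with the optimal $l=0$ pairs $(t+2,t)$ and $(t+1,t-1)$.
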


\begin{proof}
    By Lemma~\ref{lem:WB_l=0}, for every $l\geq 1$ we have
    \[
    (\mathcal{W}^1_{t+1}\mathcal{B}_t)_{l}\text{-}\mathcal{B}_{k_1,k_2}^{\,t}
    \cong
    (\mathcal{W}^1_{t+1+l}\mathcal{B}_t)_{0}\text{-}\mathcal{B}_{k_1+l,k_2}^{\,t}.
    \]
    Since isomorphic digraphs have the same spectral radius, we may replace a digraph with $l\geq 1$ by its isomorphic representative with ear of length $0$.

    Suppose that
    \[
    (\mathcal{W}^1_{t+1}\mathcal{B}_t)_{l}\text{-}\mathcal{B}_{k_1,k_2}^{\,t}
    \]
    is an $(s,r,i,j)$-optimized digraph for $l\neq 0$. Its isomorphic representative
    \[
    (\mathcal{W}^1_{t+1+l}\mathcal{B}_t)_{0}\text{-}\mathcal{B}_{k_1+l,k_2}^{\,t}
    \]
    is obtained by inserting an ear of length $0$ between the vertices $w^1_i$ and $b_j$, where
    \[
    i=t+1+l
    \qquad\text{and}\qquad
    j=t.
    \]

    However, we have shown above that, in the case of an ear of length $0$, the optimal choice for attaining the minimum is either
    \[
    (i,j)_1=(t+2,t) \quad \text{or} \quad (i,j)_2=(t+1,t-1).
    \]
    If $l>1$, then
    \[
    (i,j)=(t+1+l,t)
    \]
    is not one of these optimal choices. Hence the corresponding digraph with $l>1$ cannot attain a smaller spectral radius than the minimum already attained in the subclass with $l=0$.

    In the case $l=1$, we obtain
    \[
    (i,j)=(t+2,t)=(i,j)_1,
    \]
    which is precisely one of the optimal choices for $l=0$. Thus this case is also already included in the optimization for $l=0$.

    Therefore, in order to find the minimum spectral radius in the given class, it is sufficient to consider only the case $l=0$.
\end{proof}
\medskip

Next, consider the type 
\begin{equation*}
    (\mathcal{W}^s_i\mathcal{W}^r_j)_l, \quad s,r\in\{1,2\}, \quad s\neq r.
\end{equation*}

\begin{figure}[ht]
    \centering
    \includegraphics[width=0.5\textwidth]{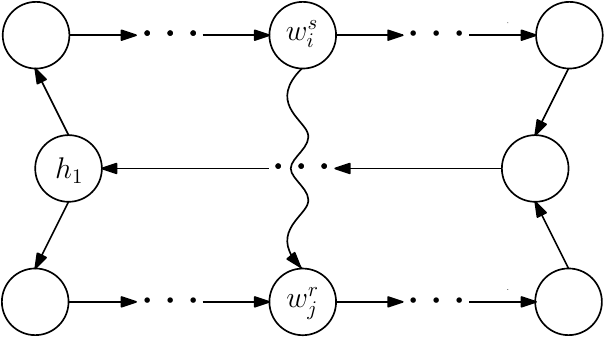}
    \caption{$(\mathcal{W}^s_i\mathcal{W}^r_j)_l\text{-}\mathcal{B}_{k_s,k_r}^{\,t}$.}
    \label{fig:WW}
\end{figure}

\medskip

\noindent
\begin{minipage}[t]{0.4\textwidth}
\small
\textit{Recurrence relations.}
\[
\begin{aligned}
    H_1&=1+zW^s_{k_s}+zW^r_{k_r}, \\
    W^s_{k_s}&=\frac{1-z^{k_s-i}}{1-z}+z^{k_s-i}W^s_i, \\
    W^s_i&=1+zW^s_{i-1}+zE_1, \\
    E_1&=\frac{1-z^l}{1-z}+z^lW^r_j, \\
    W^s_{i-1}&=\frac{1-z^{i-2}}{1-z}+z^{i-2}H_1, \\
    W^r_{k_r}&=\frac{1-z^{k_r-1}}{1-z}+z^{k_r-1}H_1, \\
    W^r_j&=\frac{1-z^{j-1}}{1-z}+z^{j-1}H_1.
\end{aligned}
\]
\end{minipage}
\hfill
\begin{minipage}[t]{0.6\textwidth}
\small
\textit{Solution for $H_1$.}
\begin{equation*}
H_1
=
\frac{
1+z-z^{k_s}-z^{k_r}
+z^{k_s-i+2}-z^{k_s-i+l+j+1}
}{
(1-z)\left(
1-z^{k_s}-z^{k_r}
-z^{k_s-i+l+j+1}
\right)
}.
\end{equation*}

\vspace{1em}

\textit{Topological polynomial.}
\begin{equation*}
    T\left((\mathcal{W}^s_i\mathcal{W}^r_j)_l;z\right) =
1-z^{k_s}-z^{k_r}
-z^{k_s-i+l+j+1}.
\end{equation*}
\end{minipage}

\medskip

\noindent
\begin{minipage}[t]{0.48\textwidth}
\small
\noindent
$(i,j)$\textit{-optimization.}

\medskip

\[
\begin{aligned}
&i=t+1,\quad j=k_r, \\[0.3em]
T\left((\mathcal{W}^s_{t+1}\mathcal{W}^r_{k_r})_l;z\right)
&=
1-z^{k_s}-z^{k_r}-z^{k_s+k_r-t+l} \\
&=
1-z^{k_s}-z^{k_r}-z^m, \\[0.3em]
&|\mathcal{W}^s|\geq1,\quad |\mathcal{W}^r|\geq1.
\end{aligned}
\]
\end{minipage}
\hfill
\begin{minipage}[t]{0.48\textwidth}
\small
\noindent
$(s,r)$\textit{-optimization.}

\medskip

\[
s,r\in\{1,2\}, \quad s\neq r.
\]

\[
\begin{aligned}
T\left((\mathcal{W}^1_{t+1}\mathcal{W}^2_{k_2})_l;z\right)
&=
1-z^{k_1}-z^{k_2}-z^m, \\[0.3em]
&|\mathcal{W}^1|\geq1,\quad |\mathcal{W}^2|\geq1.
\end{aligned}
\]
\end{minipage}

\bigskip

A digraph of type
\begin{equation*}
    (\mathcal{W}^s_i\mathcal{H})_l,
    \qquad s\in\{1,2\}.
\end{equation*}

\begin{figure}[ht]
    \centering
    \includegraphics[width=0.5\textwidth]{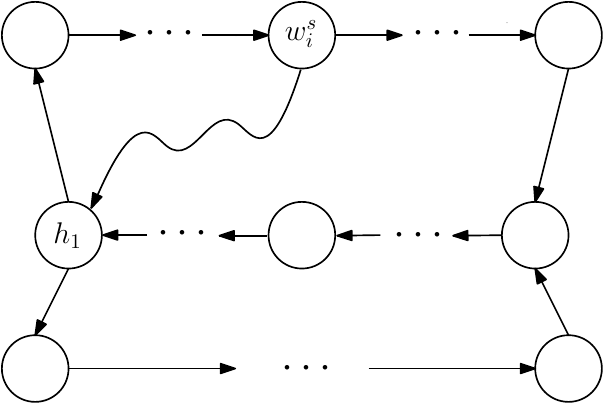}
    \caption{
    \((\mathcal{W}^s_i\mathcal{H})_l
    \text{-}\mathcal{B}_{k_s,k_r}^{\,t}\).}
    \label{fig:WH}
\end{figure}

\medskip

\noindent
\begin{minipage}[t]{0.4\textwidth}
\small
\textit{Recurrence relations.}
\[
\begin{aligned}
    H_1&=1+zW^s_{k_s}+zW^r_{k_r}, \\
    W^s_{k_s}&=\frac{1-z^{k_s-i}}{1-z}+z^{k_s-i}W^s_i, \\
    W^s_i&=1+zW^s_{i-1}+zE_1, \\
    E_1&=\frac{1-z^l}{1-z}+z^lH_1, \\
    W^s_{i-1}&=\frac{1-z^{i-2}}{1-z}+z^{i-2}H_1, \\
    W^r_{k_r}&=\frac{1-z^{k_r-1}}{1-z}+z^{k_r-1}H_1.
\end{aligned}
\]
\end{minipage}
\hfill
\begin{minipage}[t]{0.6\textwidth}
\small
\textit{Solution for $H_1$.}
\begin{equation*}
H_1
=
\frac{
1+z-z^{k_s}-z^{k_r}
+z^{k_s-i+2}
-z^{k_s-i+l+2}
}{
(1-z)\left(
1-z^{k_s}-z^{k_r}
-z^{k_s-i+l+2}
\right)
}.
\end{equation*}

\vspace{1em}

\textit{Topological polynomial.}
\begin{equation*}
    T\left((\mathcal{W}^s_i\mathcal{H})_l;z\right)
    =
    1-z^{k_s}-z^{k_r}
    -z^{k_s-i+l+2}.
\end{equation*}
\end{minipage}

\medskip

In this case, we have to derive the optimal choice of the parameter $i$ in
more detail. If $|\mathcal{B}|\geq 1$ or $l\neq 0$, then the optimal choice is $i=t+1$.
On the other hand, if $|\mathcal{B}|=0$ and $l=0$,
then we have to shift the index to $i=t+2$.
Indeed, in this case the edge $(w^s_{t+1},h_1)$ already belongs to the
original digraph $\mathcal{B}_{k_s,k_r}^{\,t}$. Consequently, the
realizability condition also becomes stronger, namely
\[
    |\mathcal{W}^s|\geq 2.
\]   
Moreover, if $ |\mathcal{B}|=0 $, then necessarily $t=1$. To avoid unnecessarily lengthening the text, we directly make the optimal choice
\[
s=1
\qquad\text{and}\qquad
r=2.
\]
\medskip

\noindent
$(s,r,i,j)$\textit{-optimization.}

\medskip

\noindent
\begin{minipage}[t]{0.48\textwidth}
\small
\noindent
$|\mathcal{B}|\geq1 \text{ or }l\neq0:$

\[
\begin{aligned}
&i=t+1,\quad s=1,\quad r=2, \\[0.3em]
T\left((\mathcal{W}^1_{t+1}\mathcal{H})_l;z\right)
&=
1-z^{k_1}-z^{k_2}-z^{k_1-t+l+1},\\[0.3em]
&|\mathcal{W}^1|\geq 1,\quad |\mathcal{B}|\geq1 \text{ or } l\neq0.
\end{aligned}
\]
\end{minipage}
\hfill
\begin{minipage}[t]{0.48\textwidth}
\small
\noindent
$|\mathcal{B}|=0$ and $l=0:$

\[
\begin{aligned}
&i=t+2,\quad s=1,\quad r=2,\\[0.3em]
T\left((\mathcal{W}^1_{t+2}\mathcal{H})_0;z\right)
&=
1-z^{k_1}-z^{k_2}-z^{k_1-1},\\[0.3em]
&|\mathcal{W}^1|\geq 2,\quad |\mathcal{B}|=0, \quad l=0.
\end{aligned}
\]
\end{minipage}

\medskip

Next, consider the type
\begin{equation*}
    (\mathcal{B}_i\mathcal{W}^s_j)_l,
    \qquad s\in\{1,2\}.
\end{equation*}

\begin{figure}[ht]
    \centering
    \includegraphics[width=0.5\textwidth]{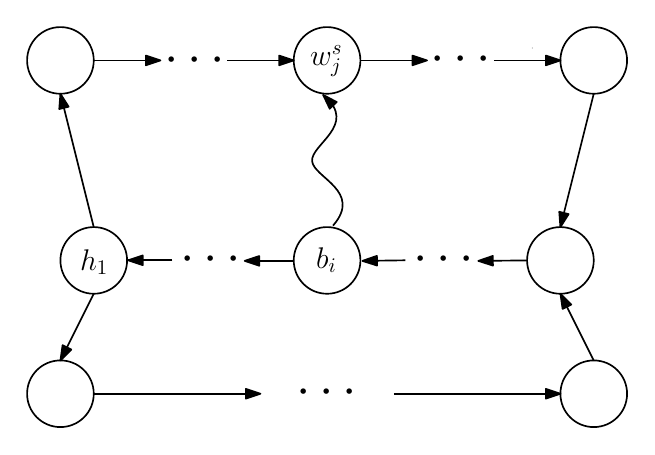}
    \caption{$(\mathcal{B}_i\mathcal{W}^s_j)_l
    \text{-}\mathcal{B}_{k_s,k_r}^{\,t}$.}
    \label{fig:BW}
\end{figure}

\medskip

\noindent
\begin{minipage}[t]{0.4\textwidth}
\small
\textit{Recurrence relations.}
\[
\begin{aligned}
    H_1&=1+zW^s_{k_s}+zW^r_{k_r}, \\
    W^s_{k_s}&=\frac{1-z^{k_s-i}}{1-z}+z^{k_s-i}B_i, \\
    B_i&=1+zB_{i-1}+zE_1, \\
    E_1&=\frac{1-z^l}{1-z}+z^lW^s_j, \\
    W_j^s&=\frac{1-z^{j-i}}{1-z}+z^{j-i}B_i, \\
    B_{i-1}&=\frac{1-z^{i-2}}{1-z}+z^{i-2}H_1, \\
    W^r_{k_r}&=\frac{1-z^{k_r-i}}{1-z}+z^{k_r-i}B_i.
\end{aligned}
\]
\end{minipage}
\hfill
\begin{minipage}[t]{0.6\textwidth}
\small
\textit{Solution for $H_1$.}
\begin{equation*}
H_1
=
\frac{
Q(z)
}{
(1-z)
\left(
1-z^{k_s}-z^{k_r}
-z^{l+j-i+1}
\right)
}.
\end{equation*}

\vspace{1em}

\textit{Topological polynomial.}
\begin{equation*}
    T\left((\mathcal{B}_i\mathcal{W}^s_j)_l;z\right)
    =
    1-z^{k_s}-z^{k_r}
    -z^{l+j-i+1}.
\end{equation*}
\end{minipage}

\medskip

\noindent
where
\begin{equation*}
    Q(z)
    =
    1+z
    -z^{l+j-i+1}
    -z^{l+j-i+2}
    +z^{k_s-i+2}
    +z^{k_r-i+2}
    -z^{k_s}
    -z^{k_r}.
\end{equation*}

\medskip

\noindent
\begin{minipage}[t]{0.48\textwidth}
\small
\noindent
$(i,j)$\textit{-optimization.}

\medskip

\[
\begin{aligned}
&i=2,\quad j=k_s, \\[0.3em]
T\left((\mathcal{B}_2\mathcal{W}^s_{k_s})_l;z\right)
&=
1-z^{k_s}-z^{k_r}
-z^{k_s+l-1}, \\[0.3em]
&|\mathcal{W}^s|\geq1,\quad |\mathcal{B}|\geq1.
\end{aligned}
\]
\end{minipage}
\hfill
\begin{minipage}[t]{0.48\textwidth}
\small
\noindent
$(s,r)$\textit{-optimization.}

\medskip

\[
\begin{aligned}
&s=1,\quad r=2, \\
T\left((\mathcal{B}_2\mathcal{W}^1_{k_1})_l;z\right)
&=
1-z^{k_1}-z^{k_2}
-z^{k_1+l-1}, \\[0.3em]
&|\mathcal{W}^1|\geq1,\quad |\mathcal{B}|\geq1.
\end{aligned}
\]
\end{minipage}

\medskip

Next, consider the type
\begin{equation*}
    (\mathcal{B}_i\mathcal{H})_l.
\end{equation*}

\begin{figure}[ht]
    \centering
    \includegraphics[width=0.5\textwidth]{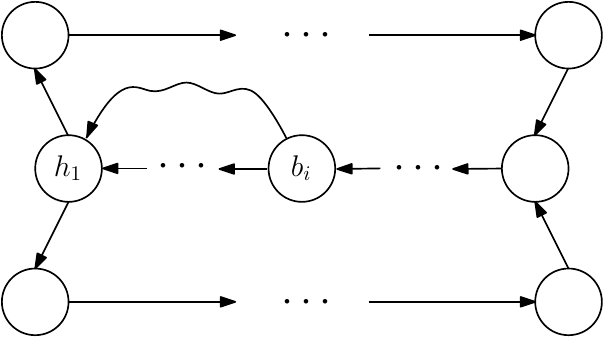}
    \caption{$(\mathcal{B}_i\mathcal{H})_l
    \text{-}\mathcal{B}_{k_s,k_r}^{\,t}$.}
    \label{fig:BH}
\end{figure}

\medskip

\noindent
\begin{minipage}[t]{0.4\textwidth}
\small
\textit{Recurrence relations.}
\[
\begin{aligned}
    H_1&=1+zW^s_{k_s}+zW^r_{k_r}, \\
    W^s_{k_s}&=\frac{1-z^{k_s-i}}{1-z}+z^{k_s-i}B_i, \\
    W^r_{k_r}&=\frac{1-z^{k_r-i}}{1-z}+z^{k_r-i}B_i, \\
    B_i&=1+zE_1+zB_{i-1}, \\
    E_1&=\frac{1-z^l}{1-z}+z^lH_1, \\
    B_{i-1}&=\frac{1-z^{i-2}}{1-z}+z^{i-2}H_1.
\end{aligned}
\]
\end{minipage}
\hfill
\begin{minipage}[t]{0.6\textwidth}
\small
\textit{Solution for $H_1$.}
\begin{equation*}
H_1
=
\frac{
Q(z)
}{
(1-z)\left(
1-z^{k_s}-z^{k_r}
-z^{k_s-i+l+2}
-z^{k_r-i+l+2}
\right)
}.
\end{equation*}

\vspace{1em}

\textit{Topological polynomial.}
\begin{equation*}
    T\left((\mathcal{B}_i\mathcal{H})_l;z\right)
    =
    1-z^{k_s}-z^{k_r}
    -z^{k_s-i+l+2}
    -z^{k_r-i+l+2}.
\end{equation*}
\end{minipage}

\medskip

\noindent
where
\begin{equation*}
    Q(z)
    =
    1+z
    +z^{k_s-i+2}
    +z^{k_r-i+2}
    -z^{k_s}
    -z^{k_r}
    -z^{k_s-i+l+2}
    -z^{k_r-i+l+2}.
\end{equation*}

\medskip

\noindent
\begin{minipage}[t]{0.48\textwidth}
\small
\noindent
$(i)$\textit{-optimization.}

\medskip

\noindent
$l\neq 0:$
\[
\begin{aligned}
&i=2, \\[0.3em]
T\left((\mathcal{B}_2\mathcal{H})_l;z\right)
&=
1-z^{k_s}-z^{k_r}
-z^{k_s+l}
-z^{k_r+l}, \\[0.3em]
&|\mathcal{B}|\geq1.
\end{aligned}
\]

\noindent
$l=0:$
\[
\begin{aligned}
&i=3, \\[0.3em]
T\left((\mathcal{B}_3\mathcal{H})_0;z\right)
&=
1-z^{k_s}-z^{k_r}
-z^{k_s-1}
-z^{k_r-1}, \\[0.3em]
&|\mathcal{B}|\geq2.
\end{aligned}
\]
\end{minipage}
\hfill
\begin{minipage}[t]{0.48\textwidth}
\small
\noindent
$(s,r)$\textit{-optimization.}

\medskip

\[
s,r\in \{1,2\}, \quad s\neq r.
\]

\noindent
$l\neq 0:$
\[
\begin{aligned}
T\left((\mathcal{B}_2\mathcal{H})_l;z\right)
&=
1-z^{k_1}-z^{k_2}
-z^{k_1+l}
-z^{k_2+l}, \\[0.3em]
&|\mathcal{B}|\geq1.
\end{aligned}
\]

\noindent
$l=0:$
\[
\begin{aligned}
T\left((\mathcal{B}_3\mathcal{H})_0;z\right)
&=
1-z^{k_1}-z^{k_2}
-z^{k_1-1}
-z^{k_2-1}, \\[0.3em]
&|\mathcal{B}|\geq2.
\end{aligned}
\]
\end{minipage}

\medskip

\begin{lemma}\label{lem:BH_l0}
    For $l\geq1$,
    \[
    (\mathcal{B}_2\mathcal{H})_l\text{-}\mathcal{B}_{k_1,k_2}^{\,t}\cong(\mathcal{B}_{l+2}\mathcal{H})_0\text{-}\mathcal{B}_{k_1+l,k_2+l}^{\,t+l}.
    \]
\end{lemma}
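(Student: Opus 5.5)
Following Lemma~\ref{lem:WB_l=0}, I will write down an explicit bijection $\phi:V(G)\to V(H)$ and check that it preserves edges. Here
\[
G=(\mathcal{B}_2\mathcal{H})_l\text{-}\mathcal{B}_{k_1,k_2}^{\,t},
\qquad
H=(\mathcal{B}_{l+2}\mathcal{H})_0\text{-}\mathcal{B}_{k_1+l,k_2+l}^{\,t+l}.
\]

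First I fix the labelling implicit in the recurrences. In $\mathcal{B}^{\,t}_{k_1,k_2}$ the head $h_1$ sends edges to $w^1_{k_1}$ and $w^2_{k_2}$. Each wing path runs down to index $t+1$ and then enters $b_t$. The body path $b_t\to b_{t-1}\to\dots\to b_2$ ends with the edge $(b_2,h_1)$. In $G$ the ear is $b_2\to e_1\to\dots\to e_l\to h_1$. So $b_2$ has out-neighbours $h_1$ and $e_1$, and there are two parallel routes from $b_2$ to $h_1$, of lengths $1$ and $l+1$.

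In $H$ the body has $t+l-1$ vertices $b_{t+l},\dots,b_2$, and the extra edge is $(b_{l+2},h_1)$. Thus $b_{l+2}$ has out-neighbours $b_{l+1}$ and $h_1$. The route $b_{l+2}\to b_{l+1}\to\dots\to b_2\to h_1$ has length $l+1$, and the chord has length $1$.

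This suggests the map:
\[
\phi(b_\alpha)=b_{\alpha+l}\ (2\le\alpha\le t),\qquad \phi(e_\alpha)=b_{l+2-\alpha}\ (1\le\alpha\le l),
\]
\[
\phi(w^s_\alpha)=w^s_{\alpha+l}\ (t+1\le\alpha\le k_s,\ s=1,2),\qquad \phi(h_1)=h_1 .
\]
So the ear vertices $e_1,\dots,e_l$ become the new body vertices $b_{l+1},\dots,b_2$. The old body and both wings are shifted up by $l$. The short edge $(b_2,h_1)$ of $G$ becomes the chord $(b_{l+2},h_1)$ of $H$.

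Next I check the counts. Both digraphs have $k_1+k_2-t+l$ vertices and $k_1+k_2-t+l+2$ edges, and $\phi$ is a bijection between the index ranges. It therefore suffices to show that $\phi$ maps each edge of $G$ to an edge of $H$. I will go through the edges by type:
\begin{itemize}
\item head edges: $(h_1,w^s_{k_s})\mapsto(h_1,w^s_{k_s+l})$;
\item wing edges: these shift by $l$ inside each wing;
\item wing-to-body edges: $(w^s_{t+1},b_t)\mapsto(w^s_{t+l+1},b_{t+l})$;
\item body edges: these shift by $l$;
\item ear edges: $(b_2,e_1)\mapsto(b_{l+2},b_{l+1})$, $(e_\alpha,e_{\alpha+1})\mapsto(b_{l+2-\alpha},b_{l+1-\alpha})$, and $(e_l,h_1)\mapsto(b_2,h_1)$;
\item the remaining edge: $(b_2,h_1)\mapsto(b_{l+2},h_1)$, which is exactly the inserted edge of $H$.
\end{itemize}

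The main obstacle is the degenerate boundary cases, not the algebra. When $t=1$ the body of $G$ is empty, but type $(\mathcal{B}_2\mathcal{H})_l$ requires $|\mathcal{B}|\ge1$, so $t\ge2$ and $b_2$ exists. I must also confirm that the inserted edge $(b_{l+2},h_1)$ is not already an edge of $\mathcal{B}^{\,t+l}$, which holds because $l\ge1$ gives $l+2\neq2$. Finally, the parameters of $H$ must be admissible: $t+l\le\min\{k_1+l,k_2+l\}$, and $|\mathcal{B}(H)|=t+l-1\ge2$, matching the realizability condition for $(\mathcal{B}_3\mathcal{H})_0$-type insertions. With the edge count equal and $\phi$ injective on edges, the map is a digraph isomorphism.
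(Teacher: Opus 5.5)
Your proof is correct and follows the same route as the paper. Both construct an explicit relabelling that sends the ear $e_1,\dots,e_l$ onto $b_{l+1},\dots,b_2$, shifts the old body up by $l$, and turns the edge $(b_2,h_1)$ into the inserted chord $(b_{l+2},h_1)$, and then check the edges. Your map is in fact slightly more careful than the paper's, which writes ``$v$ otherwise'' for the remaining vertices: since the wings of $\mathcal{B}^{\,t+l}_{k_1+l,k_2+l}$ begin at index $t+l+1$, the shift $w^s_\alpha\mapsto w^s_{\alpha+l}$ that you include is genuinely needed.
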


\begin{proof}
The proof is analogous to that of Lemma~\ref{lem:WB_l=0}. We only give the corresponding isomorphism.

Let
\[
G=(\mathcal{B}_2\mathcal{H})_l\text{-}\mathcal{B}_{k_1,k_2}^{\,t}
\]
and
\[
H=(\mathcal{B}_{l+2}\mathcal{H})_0\text{-}\mathcal{B}_{k_1+l,k_2+l}^{\,t+l}.
\]
Define
\[
\phi:V(G)\to V(H)
\]
by
\[
\phi(v)=
\begin{cases}
b_{l+2-\alpha}, & \text{if } v=e_\alpha,\ \alpha=1,\dots,l,\\
b_{l+\alpha}, & \text{if } v=b_\alpha,\ \alpha=2,\dots,t,\\
v, & \text{otherwise}.
\end{cases}
\]
As in Lemma~\ref{lem:WB_l=0}, this map is a digraph isomorphism. Hence $G\cong H$.
\end{proof}

\begin{corollary}
To find the minimum spectral radius in the class of digraphs
\[
(\mathcal{B}_i\mathcal{H})_l,
\]
it is sufficient to consider only the case $l=0$.
\end{corollary}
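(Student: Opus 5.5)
The plan is to mirror the proof of Corollary~\ref{cor:WB_l0}, using Lemma~\ref{lem:BH_l0} in place of Lemma~\ref{lem:WB_l=0}. Let $l\geq1$ and let $G$ be a digraph of type $(\mathcal{B}_i\mathcal{H})_l$. The $(i)$-optimization step for $l\neq0$ shows that the largest root in $(0,1)$, and hence the smallest spectral radius, is attained at $i=2$ with topological polynomial $1-z^{k_1}-z^{k_2}-z^{k_1+l}-z^{k_2+l}$. So for fixed $k_1,k_2,t,l$ we may assume $i=2$. Since isomorphic digraphs have the same spectral radius, Lemma~\ref{lem:BH_l0} then lets us replace $G$ by
\[
H=(\mathcal{B}_{l+2}\mathcal{H})_0\text{-}\mathcal{B}_{k_1+l,k_2+l}^{\,t+l}.
\]
This is a digraph with an ear of length $0$ attached to a butterfly core whose parameters satisfy $(k_1+l)+(k_2+l)-(t+l)=k_1+k_2-t+l=m$. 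Hence $H$ lies in the same class $\mathcal{SC}_{m+2}(m)$ and belongs to the subfamily $l=0$ of type $(\mathcal{B}_i\mathcal{H})_0$.

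Next, place $H$ within the $l=0$ optimization. For $l=0$ and fixed core $\mathcal{B}^{\,t'}_{k_1',k_2'}$, the optimal choice is $i=3$, subject to $|\mathcal{B}|\geq2$. In $H$ the core is $\mathcal{B}^{\,t+l}_{k_1+l,k_2+l}$, whose body has $t+l-1\geq l\geq1$ vertices, and the ear starts at $b_{l+2}$.
\begin{itemize}
\item If $l=1$, then $i=3$, which is exactly the optimal $l=0$ position. The realizability condition $|\mathcal{B}|=t\geq 2$ holds automatically, because $b_3$ must exist in the core $\mathcal{B}^{\,t+1}$.
\item If $l>1$, then $i=l+2>3$ is not optimal. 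The digraph $H$ is still a member of the $l=0$ subclass, so its spectral radius is at least the minimum taken over that subclass.
\end{itemize}
In either case, every digraph with $l\geq1$ is isomorphic to a digraph of the same type with $l=0$ and the same number $m$ of vertices. Therefore the minimum over the type $(\mathcal{B}_i\mathcal{H})_l$ equals the minimum over $l=0$.

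The main point needing care is that $H$ is a legitimate member of the $l=0$ family. Three things must hold: the new core parameters satisfy the butterfly constraints $1\le t+l\le\min\{k_1+l,k_2+l\}$, which follows from $t\le\min\{k_1,k_2\}$; the vertex $b_{l+2}$ lies in the body, which needs $l+2\le t+l$, i.e.\ $t\geq2$, and this is exactly the hypothesis $i=2\le t$ used for $G$; and the single added edge $(b_{l+2},h_1)$ is not already an edge of the core, which holds since the out-neighbour of $b_{l+2}$ in the core is $b_{l+1}\neq h_1$. With these checks the argument goes through.
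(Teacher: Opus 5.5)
Your proposal is correct and follows the paper's own route: the paper proves this corollary by direct analogy with Corollary~\ref{cor:WB_l0} via Lemma~\ref{lem:BH_l0}. That is, it reduces to the $(i)$-optimized representative $i=2$ for $l\geq1$ and transports it to $(\mathcal{B}_{l+2}\mathcal{H})_0$ on $\mathcal{B}^{\,t+l}_{k_1+l,k_2+l}$, and your explicit checks that this is a legitimate member of the $l=0$ family are a welcome addition. The one slip is the phrase ``every digraph with $l\geq1$ is isomorphic to a digraph of the same type with $l=0$'', which is true only for the $i=2$ representatives (it fails, e.g., for $(\mathcal{B}_3\mathcal{H})_1\text{-}\mathcal{B}^{\,3}_{4,4}$), but your argument only invokes it after the reduction to $i=2$, so nothing breaks.
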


\begin{proof}
The proof is analogous to that of Corollary~\ref{cor:WB_l0}, using Lemma~\ref{lem:BH_l0}.
\end{proof}

Next, consider the type
\begin{equation*}
    (\mathcal{H}\mathcal{W}^s_j)_l,
    \qquad s\in\{1,2\}.
\end{equation*}

\begin{figure}[ht]
    \centering
    \includegraphics[width=0.5\textwidth]{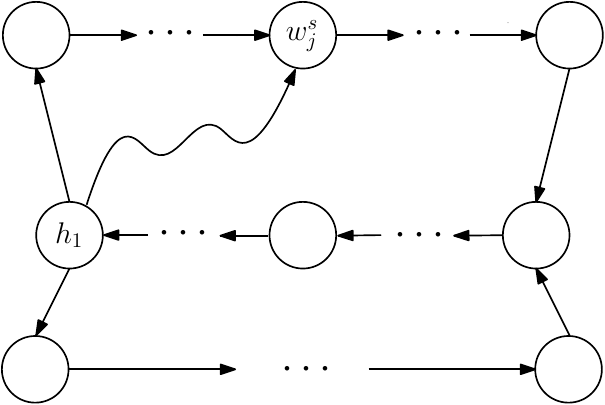}
    \caption{$(\mathcal{H}\mathcal{W}^s_j)_l
    \text{-}\mathcal{B}_{k_s,k_r}^{\,t}$.}
    \label{fig:HW}
\end{figure}

\medskip

\noindent
\begin{minipage}[t]{0.4\textwidth}
\small
\textit{Recurrence relations.}
\[
\begin{aligned}
    H_1&=1+zW^s_{k_s}+zW^r_{k_r}+zE_1, \\
    W^s_{k_s}&=\frac{1-z^{k_s-1}}{1-z}+z^{k_s-1}H_1, \\
    W^r_{k_r}&=\frac{1-z^{k_r-1}}{1-z}+z^{k_r-1}H_1, \\
    E_1&=\frac{1-z^l}{1-z}+z^lW^s_j, \\
    W^s_j&=\frac{1-z^{j-1}}{1-z}+z^{j-1}H_1.
\end{aligned}
\]
\end{minipage}
\hfill
\begin{minipage}[t]{0.6\textwidth}
\small
\textit{Solution for $H_1$.}
\begin{equation*}
H_1
=
\frac{
1+2z
-z^{k_s}
-z^{k_r}
-z^{l+j}
}{
(1-z)\left(
1-z^{k_s}
-z^{k_r}
-z^{l+j}
\right)
}.
\end{equation*}

\vspace{1em}

\textit{Topological polynomial.}
\begin{equation*}
    T\left((\mathcal{H}\mathcal{W}^s_j)_l;z\right)
    =
    1-z^{k_s}
    -z^{k_r}
    -z^{l+j}.
\end{equation*}
\end{minipage}

\medskip

\noindent
\begin{minipage}[t]{0.48\textwidth}
\small
\noindent
$(j)$\textit{-optimization.}

\medskip

\noindent
$l\neq 0:$
\[
\begin{aligned}
&j=k_s, \\[0.3em]
T\left((\mathcal{H}\mathcal{W}^s_{k_s})_l;z\right)
&=
1-z^{k_s}
-z^{k_r}
-z^{k_s+l}, \\[0.3em]
&|\mathcal{W}^s|\geq1.
\end{aligned}
\]

\noindent
$l=0:$
\[
\begin{aligned}
&j=k_s-1, \\[0.3em]
T\left((\mathcal{H}\mathcal{W}^s_{k_s-1})_0;z\right)
&=
1-z^{k_s}
-z^{k_r}
-z^{k_s-1}, \\[0.3em]
&|\mathcal{W}^s|\geq2.
\end{aligned}
\]
\end{minipage}
\hfill
\begin{minipage}[t]{0.48\textwidth}
\small
\noindent
$(s,r)$\textit{-optimization.}

\medskip

\[
s=1,\quad r=2:
\]

\noindent
$l\neq 0:$
\[
\begin{aligned}
T\left((\mathcal{H}\mathcal{W}^1_{k_1})_l;z\right)
&=
1-z^{k_1}
-z^{k_2}
-z^{k_1+l}, \\[0.3em]
&|\mathcal{W}^1|\geq1.
\end{aligned}
\]

\noindent
$l=0:$
\[
\begin{aligned}
T\left((\mathcal{H}\mathcal{W}^1_{k_1-1})_0;z\right)
&=
1-z^{k_1}
-z^{k_2}
-z^{k_1-1}, \\[0.3em]
&|\mathcal{W}^1|\geq2.
\end{aligned}
\]
\end{minipage}

\begin{lemma}\label{lem:HW_l=0}
    For $l\geq1$,
    \[
    (\mathcal{H}\mathcal{W}^1_{k_1})_l\text{-}\mathcal{B}^{\,t}_{k_1,k_2}\cong (\mathcal{H}\mathcal{W}^1_{k_1})_0\text{-}\mathcal{B}^{\,t}_{k_1+l,k_2}.
    \]
\end{lemma}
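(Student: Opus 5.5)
The plan is to write down an explicit bijection $\phi:V(G)\to V(H)$, in the same spirit as Lemma~\ref{lem:WB_l=0} and Lemma~\ref{lem:BH_l0}, where
\[
G=(\mathcal{H}\mathcal{W}^1_{k_1})_l\text{-}\mathcal{B}^{\,t}_{k_1,k_2},
\qquad
H=(\mathcal{H}\mathcal{W}^1_{k_1})_0\text{-}\mathcal{B}^{\,t}_{k_1+l,k_2}.
\]
I will then check edge preservation directly.

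\textbf{Step 1: fix the labelling.} I use the convention implicit in the recurrence relations above. In $\mathcal{B}^{\,t}_{k_1,k_2}$ the first cycle is
\[
h_1\to w^1_{k_1}\to w^1_{k_1-1}\to\cdots\to w^1_{t+1}\to b_t\to\cdots\to b_2\to h_1 .
\]
This is consistent with $W^1_{k_1}=\frac{1-z^{k_1-1}}{1-z}+z^{k_1-1}H_1$. The second cycle runs through $w^2_{k_2},\dots,w^2_{t+1}$ and the same body.

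In $G$ the ear is $h_1\to e_1\to\cdots\to e_l\to w^1_{k_1}$. Hence $N^+(h_1)=\{w^1_{k_1},w^2_{k_2},e_1\}$ in $G$.

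In $H$ the first wing is longer: it is $h_1\to w^1_{k_1+l}\to\cdots\to w^1_{k_1}\to\cdots\to w^1_{t+1}$. The ear of length $0$ is the single chord $(h_1,w^1_{k_1})$, which is not already an edge of the core because $l\ge1$.

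\textbf{Step 2: define the map.} Set
\[
\phi(v)=
\begin{cases}
w^1_{k_1+l+1-\alpha}, & \text{if } v=e_\alpha,\ \alpha=1,\dots,l,\\
v, & \text{otherwise}.
\end{cases}
\]
The ear vertices $e_1,\dots,e_l$ go onto the new wing vertices $w^1_{k_1+l},\dots,w^1_{k_1+1}$, and every other vertex is fixed. Since both digraphs have $k_1+k_2-t+l$ vertices, $\phi$ is clearly a bijection.

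\textbf{Step 3: check the edges.} Both digraphs have $m+2$ edges, so it suffices to check that $\phi$ maps $E(G)$ into $E(H)$. The edges of $G$ not incident with the ear are unchanged: the rest of the first wing from $w^1_{k_1}$ onward, the body, and the second cycle. The ear edges map as follows:
\begin{itemize}
\item $(h_1,e_1)\mapsto(h_1,w^1_{k_1+l})$, the first edge of the extended wing;
\item $(e_\alpha,e_{\alpha+1})\mapsto(w^1_{k_1+l+1-\alpha},w^1_{k_1+l-\alpha})$, consecutive wing edges;
\item $(e_l,w^1_{k_1})\mapsto(w^1_{k_1+1},w^1_{k_1})$, also a wing edge.
\end{itemize}
Finally, the original wing edge $(h_1,w^1_{k_1})$ of $G$ maps to the chord $(h_1,w^1_{k_1})$ of $H$. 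So $\phi$ is an isomorphism.

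The only real point of care is the labelling convention, namely the direction of decreasing indices along the wing. Once that is read off correctly from the recurrence relations, the verification is routine and parallels the proof of Lemma~\ref{lem:WB_l=0}.
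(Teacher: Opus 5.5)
Your proposal is correct and uses exactly the paper's isomorphism: $e_\alpha\mapsto w^1_{k_1+l+1-\alpha}$, with every other vertex fixed. The paper only asserts edge preservation by analogy with Lemma~\ref{lem:WB_l=0}. You verify the ear edges explicitly and obtain the reverse implication from the equal edge counts, which fills in the routine step the paper omits.
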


\begin{proof}
The proof is analogous to that of Lemma~\ref{lem:WB_l=0}. We only give the corresponding isomorphism.

Let
\[
G=(\mathcal{H}\mathcal{W}^1_{k_1})_l\text{-}\mathcal{B}^{\,t}_{k_1,k_2}
\]
and
\[
H=(\mathcal{H}\mathcal{W}^1_{k_1})_0\text{-}\mathcal{B}^{\,t}_{k_1+l,k_2}.
\]
Define
\[
\phi:V(G)\to V(H)
\]
by
\[
\phi(v)=
\begin{cases}
w^1_{k_1+l+1-\alpha}, & \text{if } v=e_\alpha,\ \alpha=1,\dots,l,\\
v, & \text{otherwise}.
\end{cases}
\]
As in Lemma~\ref{lem:WB_l=0}, this map is a bijection and preserves directed edges. Hence $\phi$ is a digraph isomorphism, and therefore $G\cong H$.
\end{proof}

\begin{corollary}
To find the minimum spectral radius in the class of digraphs
\[
(\mathcal{H}\mathcal{W}^s_j)_l,
\]
it is sufficient to consider only the case $l=0$.
\end{corollary}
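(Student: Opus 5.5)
We follow the pattern of Corollary~\ref{cor:WB_l0}, now using Lemma~\ref{lem:HW_l=0} in place of Lemma~\ref{lem:WB_l=0}. Since isomorphic digraphs have the same spectral radius, every $(s,r,j)$-optimized digraph of type $(\mathcal{H}\mathcal{W}^s_j)_l$ with $l\geq1$ can be replaced by an isomorphic representative of type $(\mathcal{H}\mathcal{W}^1_{j'})_0$ on a larger butterfly core.

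By the $(s,r)$-optimization we may assume $s=1$, $r=2$. For $l\neq0$ the optimal choice is then $j=k_1$, giving
\[
(\mathcal{H}\mathcal{W}^1_{k_1})_l\text{-}\mathcal{B}^{\,t}_{k_1,k_2}.
\]
Lemma~\ref{lem:HW_l=0} shows this digraph is isomorphic to
\[
(\mathcal{H}\mathcal{W}^1_{k_1})_0\text{-}\mathcal{B}^{\,t}_{k_1+l,k_2}.
\]
Setting $k_1'=k_1+l$, this is an ear of length $0$ from $h_1$ to $w^1_{k_1'-l}$ in $\mathcal{B}^{\,t}_{k_1',k_2}$, and it still has $m$ vertices by \eqref{params}.

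We compare this with the $l=0$ optimum $j=k_1'-1$. If $l=1$, the index $k_1'-l=k_1'-1$ is exactly the optimal $l=0$ choice, so the digraph already lies in the optimized $l=0$ subclass. If $l>1$, the index is strictly smaller than $k_1'-1$. The topological polynomial $1-z^{k_1'}-z^{k_2}-z^{j}$ then has a smaller last exponent, hence a larger value of $\Phi$'s subtracted term on $(0,1)$ and a smaller root $R$. So its spectral radius is not below the $l=0$ minimum, and the case is dominated.

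A secondary check, which I expect to be the only real obstacle, concerns realizability. The $l=0$ optimum needs $|\mathcal{W}^1|\geq2$ in the new core, that is, $k_1'-t\geq2$. This holds because $k_1'-t=k_1+l-t\geq1+l\geq2$ whenever $|\mathcal{W}^1|\geq1$ and $l\geq1$. With this condition verified, the reduction to $l=0$ is complete.
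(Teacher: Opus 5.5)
Your proposal is correct and follows the paper's own argument. The paper's proof just says to repeat Corollary~\ref{cor:WB_l0} using Lemma~\ref{lem:HW_l=0}, and that is what you do, including the split into $l=1$ (which lands on the $l=0$ optimum $j=k_1'-1$) and $l>1$ (a non-optimal $l=0$ choice), plus a check the paper leaves implicit: that the enlarged core satisfies $|\mathcal{W}^1|=k_1+l-t\geq 2$.
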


\begin{proof}
The proof is analogous to that of Corollary~\ref{cor:WB_l0}, using Lemma~\ref{lem:HW_l=0}.
\end{proof}

\medskip

It remains to consider the last cross-region type, namely
\begin{equation*}
    (\mathcal{H}\mathcal{B}_j)_l.
\end{equation*}

\begin{figure}[ht]
    \centering
    \includegraphics[width=0.5\textwidth]{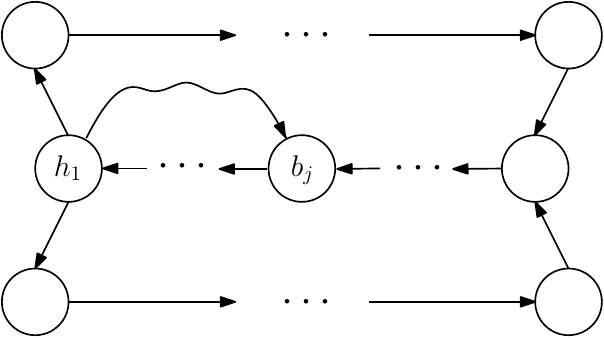}
    \caption{$(\mathcal{H}\mathcal{B}_j)_l
    \text{-}\mathcal{B}_{k_s,k_r}^{\,t}$.}
    \label{fig:HB}
\end{figure}

\medskip

\noindent
\begin{minipage}[t]{0.4\textwidth}
\small
\textit{Recurrence relations.}
\[
\begin{aligned}
    H_1&=1+zW^s_{k_s}+zW^r_{k_r}+zE_1, \\
    W^s_{k_s}&=\frac{1-z^{k_s-1}}{1-z}+z^{k_s-1}H_1, \\
    W^r_{k_r}&=\frac{1-z^{k_r-1}}{1-z}+z^{k_r-1}H_1, \\
    E_1&=\frac{1-z^l}{1-z}+z^lB_j, \\
    B_j&=\frac{1-z^{j-1}}{1-z}+z^{j-1}H_1.
\end{aligned}
\]
\end{minipage}
\hfill
\begin{minipage}[t]{0.6\textwidth}
\small
\textit{Solution for $H_1$.}
\begin{equation*}
H_1
=
\frac{
1+2z
-z^{k_s}
-z^{k_r}
-z^{l+j}
}{
(1-z)\left(
1-z^{k_s}
-z^{k_r}
-z^{l+j}
\right)
}.
\end{equation*}

\vspace{1em}

\textit{Topological polynomial.}
\begin{equation*}
    T\left((\mathcal{H}\mathcal{B}_j)_l;z\right)
    =
    1-z^{k_s}
    -z^{k_r}
    -z^{l+j}.
\end{equation*}
\end{minipage}

\medskip

\noindent
\begin{minipage}[t]{0.48\textwidth}
\small
\noindent
$(j)$\textit{-optimization.}

\medskip

\noindent
$l\neq 0 \text{ or } |\mathcal{W}^2|\neq 0:$
\[
\begin{aligned}
&j=t, \\[0.3em]
T\left((\mathcal{H}\mathcal{B}_t)_l;z\right)
&=
1-z^{k_s}
-z^{k_r}
-z^{l+t}, \\[0.3em]
&|\mathcal{B}|\geq1.
\end{aligned}
\]

\noindent
$l=0,\ |\mathcal{W}^2|=0:$
\[
\begin{aligned}
&j=t-1, \\[0.3em]
T\left((\mathcal{H}\mathcal{B}_{t-1})_0;z\right)
&=
1-z^{k_s}
-z^{k_r}
-z^{t-1}, \\[0.3em]
&|\mathcal{B}|\geq2.
\end{aligned}
\]
\end{minipage}
\hfill
\begin{minipage}[t]{0.48\textwidth}
\small
\noindent
$(s,r)$\textit{-optimization.}

\medskip

\[
s,r\in\{1,2\}, \quad s\neq r.
\]

\noindent
$l\neq 0 \text{ or } |\mathcal{W}^2|\neq 0:$
\[
\begin{aligned}
T\left((\mathcal{H}\mathcal{B}_t)_l;z\right)
&=
1-z^{k_1}
-z^{k_2}
-z^{l+t}, \\[0.3em]
&|\mathcal{B}|\geq1.
\end{aligned}
\]

\noindent
$l=0,\ |\mathcal{W}^2|=0:$
\[
\begin{aligned}
T\left((\mathcal{H}\mathcal{B}_{t-1})_0;z\right)
&=
1-z^{k_1}
-z^{k_2}
-z^{t-1}, \\[0.3em]
&|\mathcal{B}|\geq2.
\end{aligned}
\]
\end{minipage}

\medskip

\subsection{Intra-region types}

Consider the type
\[
(\mathcal{W}^s_i\mathcal{W}^s_j)_l^+, \quad s\in\{1,2\}.
\]

\begin{figure}[H]
    \centering
    \includegraphics[width=0.55\textwidth]{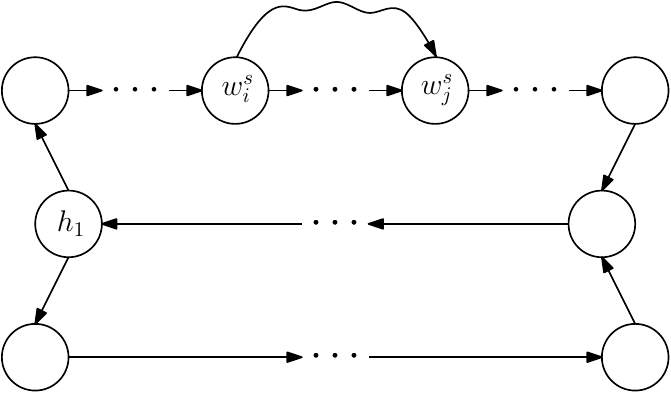}
    \caption{$(\mathcal{W}^s_i\mathcal{W}^s_j)_l^+
    \text{-}\mathcal{B}_{k_s,k_r}^{\,t}$.}
    \label{fig:WW+}
\end{figure}

\medskip

\noindent
\begin{minipage}[t]{0.4\textwidth}
\small
\textit{Recurrence relations.}
\[
\begin{aligned}
    H_1&=1+zW^s_{k_s}+zW^r_{k_r}, \\
    W^s_{k_s}&=\frac{1-z^{k_s-i}}{1-z}+z^{k_s-i}W^s_i, \\
    W^s_i&=1+zW^s_{i-1}+zE_1, \\
    W^s_{i-1}&=\frac{1-z^{i-2}}{1-z}+z^{i-2}H_1, \\
    E_1&=\frac{1-z^l}{1-z}+z^lW^s_j, \\
    W^s_j&=\frac{1-z^{j-1}}{1-z}+z^{j-1}H_1, \\
    W^r_{k_r}&=\frac{1-z^{k_r-1}}{1-z}+z^{k_r-1}H_1.
\end{aligned}
\]
\end{minipage}
\hfill
\begin{minipage}[t]{0.6\textwidth}
\small
\textit{Solution for $H_1$.}
\begin{equation*}
H_1
=
\frac{
1+z-z^{k_s}-z^{k_r}
+z^{k_s-i+2}
-z^{k_s-i+l+j+1}
}{
(1-z)\left(
1-z^{k_s}
-z^{k_r}
-z^{k_s-i+l+j+1}
\right)
}.
\end{equation*}

\vspace{1em}

\textit{Topological polynomial.}
\begin{equation*}
    T\left((\mathcal{W}^s_i\mathcal{W}^s_j)^+_l;z\right)
    =
    1-z^{k_s}
    -z^{k_r}
    -z^{k_s-i+l+j+1}.
\end{equation*}
\end{minipage}

\noindent
\begin{minipage}[t]{0.48\textwidth}
\small
\noindent
$(i,j)$\textit{-optimization.}

\medskip

\noindent
$l\neq 0:$
\[
\begin{aligned}
&j=i-1, \\[0.3em]
T\left((\mathcal{W}^s_i\mathcal{W}^s_{i-1})^+_l;z\right)
&=
1-z^{k_s}
-z^{k_r}
-z^{k_s+l}, \\[0.3em]
&|\mathcal{W}^s|\geq2.
\end{aligned}
\]

\noindent
$l=0:$
\[
\begin{aligned}
&j=i-2, \\[0.3em]
T\left((\mathcal{W}^s_i\mathcal{W}^s_{i-2})^+_0;z\right)
&=
1-z^{k_s}
-z^{k_r}
-z^{k_s-1}, \\[0.3em]
&|\mathcal{W}^s|\geq3.
\end{aligned}
\]
\end{minipage}
\hfill
\begin{minipage}[t]{0.48\textwidth}
\small
\noindent
$(s,r)$\textit{-optimization.}

\medskip

\noindent
$l\neq 0:$
\[
\begin{aligned}
&s=1,\quad r=2, \\[0.3em]
T\left((\mathcal{W}^1_i\mathcal{W}^1_{i-1})^+_l;z\right)
&=
1-z^{k_1}
-z^{k_2}
-z^{k_1+l}, \\[0.3em]
&|\mathcal{W}^1|\geq2.
\end{aligned}
\]

\noindent
$l=0:$
\[
\begin{aligned}
&s=1,\quad r=2, \\[0.3em]
T\left((\mathcal{W}^1_i\mathcal{W}^1_{i-2})^+_0;z\right)
&=
1-z^{k_1}
-z^{k_2}
-z^{k_1-1}, \\[0.3em]
&|\mathcal{W}^1|\geq3.
\end{aligned}
\]
\end{minipage}

\begin{lemma}\label{lem:WW_l=0}
    For $l\geq1$,
    \[
    (\mathcal{W}^1_i\mathcal{W}^1_{i-1})^+_l\text{-}\mathcal{B}^{\,t}_{k_1,k_2}\cong
    (\mathcal{W}^1_{i+l}\mathcal{W}^1_{i-1})^+_0\text{-}\mathcal{B}^{\,t}_{k_1+l,k_2}.
    \]
\end{lemma}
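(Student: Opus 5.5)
The plan is to imitate the proof of Lemma~\ref{lem:WB_l=0}: write down an explicit bijection between the vertex sets and check that it preserves directed edges. First we fix the labelling. In $\mathcal{B}^{\,t}_{k_1,k_2}$ the first wing is the directed path
\[
h_1\to w^1_{k_1}\to w^1_{k_1-1}\to\cdots\to w^1_{t+1}\to b_t\to\cdots
\]
with indices decreasing along the edges. The ear of $G=(\mathcal{W}^1_i\mathcal{W}^1_{i-1})^+_l\text{-}\mathcal{B}^{\,t}_{k_1,k_2}$ is $w^1_i\to e_1\to\cdots\to e_l\to w^1_{i-1}$, running parallel to the wing edge $(w^1_i,w^1_{i-1})$. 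In $H=(\mathcal{W}^1_{i+l}\mathcal{W}^1_{i-1})^+_0\text{-}\mathcal{B}^{\,t}_{k_1+l,k_2}$ the wing is longer by $l$. It contains the path $w^1_{i+l}\to w^1_{i+l-1}\to\cdots\to w^1_{i}\to w^1_{i-1}$, together with the extra chord $(w^1_{i+l},w^1_{i-1})$.

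The key observation is that $G$ and $H$ have the same abstract shape: two internally disjoint paths from one vertex to $w^1_{i-1}$, one of length $l+1$ and one of length $1$. In $G$ the long path is the ear and the short one is the wing edge; in $H$ the long path lies in the wing and the short one is the chord. So we map the ear onto the new wing vertices and shift the part of the wing above it by $l$. Define $\phi:V(G)\to V(H)$ by
\[
\phi(v)=
\begin{cases}
w^1_{\alpha+l}, & \text{if } v=w^1_\alpha,\ \alpha=i,i+1,\dots,k_1,\\
w^1_{i+l-\alpha}, & \text{if } v=e_\alpha,\ \alpha=1,\dots,l,\\
v, & \text{otherwise}.
\end{cases}
\]
Thus $e_1,\dots,e_l$ go to $w^1_{i+l-1},\dots,w^1_{i}$, and $w^1_i$ goes to $w^1_{i+l}$.

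Next we check that $\phi$ is a bijection. The images of the $e_\alpha$ are exactly the $l$ new wing indices $i,\dots,i+l-1$. The shifted vertices fill the indices $i+l,\dots,k_1+l$. All other vertices are fixed, and the vertex counts agree by \eqref{params}.

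Then we check the edges in three groups.
\begin{enumerate}
\item Wing edges $(w^1_\alpha,w^1_{\alpha-1})$ with $\alpha>i$, and the edge $(h_1,w^1_{k_1})$, are shifted to the corresponding wing edges of $H$, the latter going to $(h_1,w^1_{k_1+l})$.
\item The ear edges are sent as follows: $(w^1_i,e_1)\mapsto(w^1_{i+l},w^1_{i+l-1})$, $(e_\alpha,e_{\alpha+1})\mapsto(w^1_{i+l-\alpha},w^1_{i+l-\alpha-1})$, and $(e_l,w^1_{i-1})\mapsto(w^1_i,w^1_{i-1})$. These are all wing edges of $H$.
\item The original wing edge $(w^1_i,w^1_{i-1})$ goes to the chord $(w^1_{i+l},w^1_{i-1})$. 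All remaining edges, including those below $w^1_{i-1}$, the body, the head and the second cycle, are fixed.
\end{enumerate}
Both digraphs have $m+2$ edges and $\phi$ is injective on edges, so it is a bijection on edge sets. Hence $G\cong H$.

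The main obstacle is bookkeeping at the boundary. When $i=k_1$, the first group degenerates: $w^1_i$ is the out-neighbour of $h_1$, and we must check that $(h_1,w^1_{k_1})$ maps to $(h_1,w^1_{k_1+l})$. When $i-1=t+1$ or $i=t+2$, the vertex $w^1_{i-1}$ is the wing vertex adjacent to $b_t$. We will verify that these cases fit the general formula, so no separate argument is needed. We also note that the realizability condition $|\mathcal{W}^1|\ge 2$ in $G$ becomes $|\mathcal{W}^1|\ge l+2\ge3$ in $H$, consistent with the $l=0$ condition.
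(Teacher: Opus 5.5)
Your proposal is correct and uses exactly the paper's isomorphism: the map $\phi$ sending $w^1_\alpha\mapsto w^1_{\alpha+l}$ for $\alpha=i,\dots,k_1$ and $e_\alpha\mapsto w^1_{i+l-\alpha}$, fixing every other vertex. Your edge-by-edge check fills in the verification that the paper only asserts by analogy with Lemma~\ref{lem:WB_l=0}. In that check, $(w^1_i,w^1_{i-1})$ goes to the chord $(w^1_{i+l},w^1_{i-1})$, and the equal-edge-count argument gives the converse direction.
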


\begin{proof}
The proof is analogous to that of Lemma~\ref{lem:WB_l=0}. We only give the corresponding isomorphism.

Let
\[
G=(\mathcal{W}^1_i\mathcal{W}^1_{i-1})^+_l\text{-}\mathcal{B}^{\,t}_{k_1,k_2}
\]
and
\[
H=(\mathcal{W}^1_{i+l}\mathcal{W}^1_{i-1})^+_0\text{-}\mathcal{B}^{\,t}_{k_1+l,k_2}.
\]
Define
\[
\phi:V(G)\to V(H)
\]
by
\[
\phi(v)=
\begin{cases}
w^1_{l+\alpha}, & \text{if } v=w^1_\alpha,\ \alpha=i,\dots,k_1, \\
w^1_{i+l-\alpha}, & \text{if } v=e_\alpha,\ \alpha=1,\dots,l, \\
v, & \text{otherwise}.
\end{cases}
\]
As in Lemma~\ref{lem:WB_l=0}, this map is a digraph isomorphism. Hence $G\cong H$.
\end{proof}

\begin{corollary}
To find the minimum spectral radius in the class of digraphs
\[
(\mathcal{W}^s_i\mathcal{W}^s_j)_l^+,
\]
it is sufficient to consider only the case $l=0$.
\end{corollary}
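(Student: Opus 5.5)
The plan follows Corollary~\ref{cor:WB_l0} exactly, with Lemma~\ref{lem:WW_l=0} in place of Lemma~\ref{lem:WB_l=0}. First I reduce to the $(s,r,i,j)$-optimized representatives. After the optimization carried out above, the only digraphs with $l\ge 1$ that can attain the minimum in the family $(\mathcal{W}^s_i\mathcal{W}^s_j)_l^+$ are
\[
(\mathcal{W}^1_i\mathcal{W}^1_{i-1})^+_l\text{-}\mathcal{B}^{\,t}_{k_1,k_2},
\qquad |\mathcal{W}^1|\geq 2 .
\]
Any other choice of $s$, $i$, $j$ gives a strictly smaller exponent $k_s-i+l+j+1$ in the last term of the topological polynomial. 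Since the other terms are fixed, the root in $(0,1)$ is then strictly smaller, and so the spectral radius is strictly larger.

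Next I apply Lemma~\ref{lem:WW_l=0}. It gives
\[
(\mathcal{W}^1_i\mathcal{W}^1_{i-1})^+_l\text{-}\mathcal{B}^{\,t}_{k_1,k_2}\cong
(\mathcal{W}^1_{i+l}\mathcal{W}^1_{i-1})^+_0\text{-}\mathcal{B}^{\,t}_{k_1+l,k_2}.
\]
Isomorphic digraphs have the same spectral radius, so each optimized digraph with $l\ge1$ is represented by a digraph with an ear of length $0$ in the same family. The total vertex count $k_1+k_2-t+l=m$ is unchanged, in line with \eqref{params}. The realizability condition $|\mathcal{W}^1|\ge 2$ for the original becomes $|\mathcal{W}^1|=k_1+l-t\ge 3$ for the image, which is exactly the condition required in the $l=0$ optimization.

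Finally, I compare the image with the $l=0$ optimum, which is $j=i'-2$ with index gap $i'-j=2$. The image has $i'=i+l$ and $j=i-1$, so its gap is $l+1$. For $l=1$ the gap is $2$, so the image is exactly an optimal $l=0$ configuration and is already covered by the $l=0$ analysis. For $l>1$ the gap exceeds $2$. Its topological polynomial is then $1-z^{k_1+l}-z^{k_2}-z^{k_1+l+1-(l+1)}$, and the last exponent is smaller than the optimal value $k_1+l-1$ for that core. So it cannot beat the $l=0$ optimum.

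The only step needing care is the index bookkeeping in Lemma~\ref{lem:WW_l=0}. One must check that $\phi$ sends the relabelled wing vertices and the ear vertices onto a consecutive block of $\mathcal{W}^1$ in the new core, and that the image still lies in the intra-region, in-direction ($+$) type. Both follow from the explicit formula for $\phi$. Hence it suffices to consider $l=0$.
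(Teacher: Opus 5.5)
Your proposal is correct and follows the paper's route. The paper proves this corollary by repeating the argument of Corollary~\ref{cor:WB_l0} with Lemma~\ref{lem:WW_l=0}: it reduces to the optimized representative, transports it to an ear of length $0$, and observes that the image is an optimal $l=0$ configuration for $l=1$ and a suboptimal one for $l>1$. That is exactly what you do, and you also make the realizability check $k_1+l-t\geq 3$ explicit.

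One small imprecision: when $k_1=k_2$, the choice $s=2$ gives the same exponent rather than a strictly smaller one. Only the non-strict inequality is needed, so the argument is unaffected.
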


\begin{proof}
The proof is analogous to that of Corollary~\ref{cor:WB_l0}, using Lemma~\ref{lem:WW_l=0}.
\end{proof}

\medskip

The type
\[
(\mathcal{W}^s_i\mathcal{W}^s_j)_l^-, \quad s\in\{1,2\}.
\]

\begin{figure}[H]
    \centering
    \includegraphics[width=0.55\textwidth]{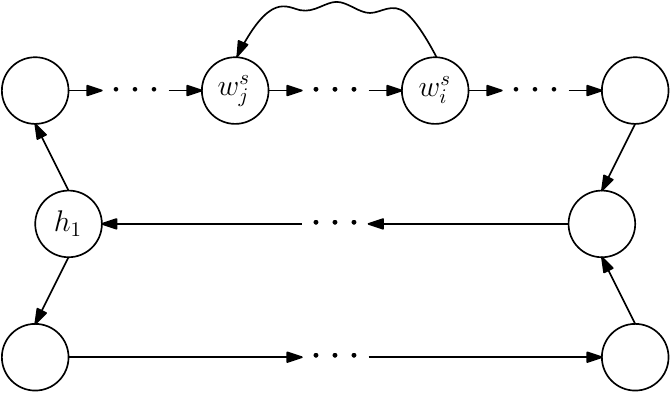}
    \caption{$(\mathcal{W}^s_i\mathcal{W}^s_j)_l^-
    \text{-}\mathcal{B}_{k_s,k_r}^{\,t}$.}
    \label{fig:WW-}
\end{figure}

\medskip

\noindent
\begin{minipage}[t]{0.4\textwidth}
\small
\textit{Recurrence relations.}
\[
\begin{aligned}
    H_1&=1+zW^s_{k_s}+zW^r_{k_r}, \\
    W^s_{k_s}&=\frac{1-z^{k_s-i}}{1-z}+z^{k_s-i}W^s_i, \\
    W^s_i&=1+zW^s_{i-1}+zE_1, \\
    E_1&=\frac{1-z^l}{1-z}+z^lW^s_j, \\
    W^s_j&=\frac{1-z^{j-i}}{1-z}+z^{j-i}W^s_i, \\
    W^s_{i-1}&=\frac{1-z^{i-2}}{1-z}+z^{i-2}H_1, \\
    W^r_{k_r}&=\frac{1-z^{k_r-1}}{1-z}+z^{k_r-1}H_1.
\end{aligned}
\]
\end{minipage}
\hfill
\begin{minipage}[t]{0.6\textwidth}
\small
\textit{Solution for $H_1$.}
\begin{equation*}
H_1
=
\frac{
Q(z)
}{
(1-z)\left(
1-z^{k_s}
-z^{k_r}
-z^{l+j-i+1}
+z^{k_r+l+j-i+1}
\right)
}.
\end{equation*}

\vspace{1em}

\textit{Topological polynomial.}
\begin{equation*}
    T\left((\mathcal{W}^s_i\mathcal{W}^s_j)^-_l;z\right)
    =
    1-z^{k_s}
    -z^{k_r}
    -z^{l+j-i+1}
    +z^{k_r+l+j-i+1}.
\end{equation*}
\end{minipage}

\medskip
\noindent
where
\[
Q(z)=1+z
-z^{l+j-i+1}
-z^{l+j-i+2}
+z^{k_s-i+2}
+z^{k_r+l+j-i+1}
-z^{k_s}
-z^{k_r}.
\]

We show that the polynomial
\[
T\left((\mathcal{W}^s_i\mathcal{W}^s_j)^-_l;z\right)
=
1-z^{k_s}
-z^{k_r}
-z^{l+j-i+1}
+z^{k_r+l+j-i+1}
\]
is strictly decreasing on the interval $(0,1)$ and takes the values $1$ and $-1$ at $z=0$ and $z=1$, respectively. Therefore, it satisfies the assumptions of Lemma~\ref{lem:topological_equation_from_vertex_gf}.

Since $k_s,k_r\geq 1$, $l\geq 0$, and
\[
j\in\{k_s,k_s-1,\dots,t+2\}
\quad \text{and} \quad
i\in\{j-1,j-2,\dots,t+1\},
\]
all exponents appearing in
\[
T\left((\mathcal{W}^s_i\mathcal{W}^s_j)^-_l;z\right)
\]
are at least $1$.

Differentiating, we obtain
\begin{align*}
T'\left((\mathcal{W}^s_i\mathcal{W}^s_j)^-_l;z\right)
&=
-k_s z^{k_s-1}
-k_r z^{k_r-1} 
-(l+j-i+1)z^{l+j-i} \\
&\quad\;
+(k_r+l+j-i+1)z^{k_r+l+j-i}
\\
&=
-k_s z^{k_s-1}
-k_r z^{k_r-1}\left(1-z^{l+j-i+1}\right)
-(l+j-i+1)z^{l+j-i}\left(1-z^{k_r}\right).
\end{align*}
For every $z\in(0,1)$, all three terms in the last expression are strictly negative. Hence
\[
T'\left((\mathcal{W}^s_i\mathcal{W}^s_j)^-_l;z\right)<0,
\]
and therefore
\[
T\left((\mathcal{W}^s_i\mathcal{W}^s_j)^-_l;z\right)
\]
is strictly decreasing on $(0,1)$. Moreover,
\[
T\left((\mathcal{W}^s_i\mathcal{W}^s_j)^-_l;0\right)=1
\quad \text{and} \quad
T\left((\mathcal{W}^s_i\mathcal{W}^s_j)^-_l;1\right)=-1.
\]
Therefore, this polynomial satisfies the conditions of Lemma~\ref{lem:topological_equation_from_vertex_gf}.

In order to optimize the difference $j-i$ in the exponents of the terms of the
topological polynomial
\[
T\left((\mathcal{W}^s_i\mathcal{W}^s_j)^-_l;z\right),
\]
we rewrite it in the form
\[
1-z^{k_s}-z^{k_r}-z^{l+j-i+1}\left(1-z^{k_r}\right).
\]
Since $1-z^{k_r}>0$ for every $z\in(0,1)$, the largest possible root is
obtained by maximizing the difference $j-i$. Therefore, the optimal choice is $j=k_s$ and $i=t+1$.
Thus, we obtain the $(i,j)$-optimized topological polynomial
\begin{align*}
    T\left((\mathcal{W}^s_{t+1}\mathcal{W}^s_{k_s})^-_l;z\right)
    &=
    1-z^{k_s}-z^{k_r}-z^{k_s+l-t}+z^{k_s+k_r+l-t} \\
    &=
    1-z^{k_s}-z^{k_r}-z^{k_s+l-t}+z^m,
    \qquad |\mathcal{W}^s|\geq 2,
\end{align*}

By choosing $s=1$ and $r=2$ we obtain the $(s,r,i,j)$-optimized polynomial
\[
T\left((\mathcal{W}^1_{t+1}\mathcal{W}^1_{k_1})^-_l;z\right)
=
1-z^{k_1}-z^{k_2}-z^{k_1+l-t}+z^m, \qquad |\mathcal{W}^1|\geq 2.
\]
\medskip

Next, consider the type
\[
(\mathcal{B}_{i}\mathcal{B}_{j})^+_l.
\]

\begin{figure}[H]
    \centering
    \includegraphics[width=0.55\textwidth]{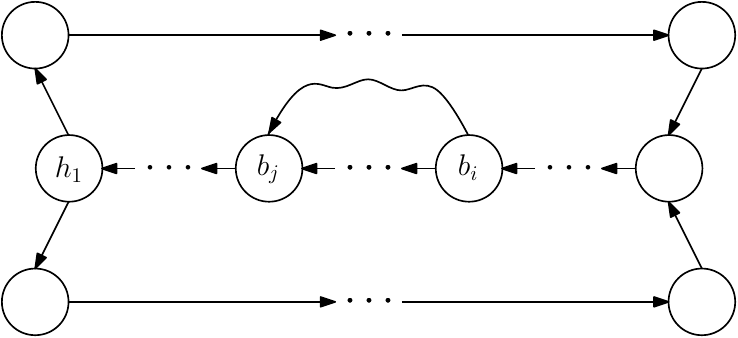}
    \caption{$(\mathcal{B}_{i}\mathcal{B}_{j})^+_l
    \text{-}\mathcal{B}_{k_s,k_r}^{\,t}$.}
    \label{fig:BB+}
\end{figure}

\medskip

\noindent
\begin{minipage}[t]{0.4\textwidth}
\small
\textit{Recurrence relations.}
\[
\begin{aligned}
    H_1&=1+zW^s_{k_s}+zW^r_{k_r}, \\
    W^s_{k_s}&=\frac{1-z^{k_s-i}}{1-z}+z^{k_s-i}B_i, \\
    W^r_{k_r}&=\frac{1-z^{k_r-i}}{1-z}+z^{k_r-i}B_i, \\
    B_i&=1+zE_1+zB_{i-1}, \\
    E_1&=\frac{1-z^l}{1-z}+z^lB_j, \\
    B_{i-1}&=\frac{1-z^{i-2}}{1-z}+z^{i-2}H_1, \\
    B_j&=\frac{1-z^{j-1}}{1-z}+z^{j-1}H_1.
\end{aligned}
\]
\end{minipage}
\hfill
\begin{minipage}[t]{0.6\textwidth}
\small
\textit{Solution for $H_1$.}
\begin{equation*}
H_1
=
\frac{
Q(z)
}{
(1-z)\left(
1-z^{k_s}
-z^{k_r}
-z^{k_s-i+l+j+1}
-z^{k_r-i+l+j+1}
\right)
}.
\end{equation*}

\vspace{1em}

\textit{Topological polynomial.}
\begin{equation*}
    T\left((\mathcal{B}_i\mathcal{B}_j)^+_l;z\right)
    =
    1-z^{k_s}
    -z^{k_r}
    -z^{k_s-i+l+j+1}
    -z^{k_r-i+l+j+1}.
\end{equation*}
\end{minipage}

\medskip
\noindent
where 
\[
Q(z)=1+z-z^{k_s}-z^{k_r}
+z^{k_s-i+2}
+z^{k_r-i+2}
-z^{k_s-i+l+j+1}
-z^{k_r-i+l+j+1}.
\]

\noindent
\begin{minipage}[t]{0.48\textwidth}
\small
\noindent
$(i,j)$\textit{-optimization.}

\medskip

\noindent
$l\neq 0:$
\[
\begin{aligned}
&j=i-1, \\[0.3em]
T\left((\mathcal{B}_i\mathcal{B}_{i-1})^+_l;z\right)
&=
1-z^{k_s}-z^{k_r} \\
&\quad
-z^{k_s+l}-z^{k_r+l}, \\[0.3em]
&|\mathcal{B}|\geq2.
\end{aligned}
\]

\noindent
$l=0:$
\[
\begin{aligned}
&j=i-2, \\[0.3em]
T\left((\mathcal{B}_i\mathcal{B}_{i-2})^+_0;z\right)
&=
1-z^{k_s}
-z^{k_r} \\
&\quad
-z^{k_s-1}
-z^{k_r-1}, \\[0.3em]
&|\mathcal{B}|\geq3.
\end{aligned}
\]
\end{minipage}
\hfill
\begin{minipage}[t]{0.48\textwidth}
\small
\noindent
$(s,r)$\textit{-optimization.}

\medskip

\noindent
$l\neq 0:$
\[
\begin{aligned}
&\hspace{-4em}s,r\in\{1,2\},\quad s\neq r, \\[0.3em]
T\left((\mathcal{B}_i\mathcal{B}_{i-1})^+_l;z\right)
&=
1-z^{k_1}-z^{k_2} \\
&\quad
-z^{k_1+l}-z^{k_2+l}, \\[0.3em]
&|\mathcal{B}|\geq2.
\end{aligned}
\]

\noindent
$l=0:$
\[
\begin{aligned}
&\hspace{-4em}s,r\in\{1,2\},\quad s\neq r, \\[0.3em]
T\left((\mathcal{B}_i\mathcal{B}_{i-2})^+_0;z\right)
&=
1-z^{k_1}
-z^{k_2} \\
&\quad
-z^{k_1-1}
-z^{k_2-1}, \\[0.3em]
&|\mathcal{B}|\geq3.
\end{aligned}
\]
\end{minipage}

\begin{lemma}\label{lem:BB_l=0}
    For $l\geq1$,
    \[
    (\mathcal{B}_{i}\mathcal{B}_{i-1})^+_l\text{-}\mathcal{B}^{\,t}_{k_1,k_2}\cong
    (\mathcal{B}_{i+l}\mathcal{B}_{i-1})^+_0\text{-}\mathcal{B}^{\,t+l}_{k_1+l,k_2+l}.
    \]
\end{lemma}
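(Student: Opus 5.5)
We will proceed exactly as in Lemma~\ref{lem:WB_l=0} and Lemma~\ref{lem:BH_l0}: write down an explicit vertex bijection $\phi$ from $G=(\mathcal{B}_{i}\mathcal{B}_{i-1})^+_l\text{-}\mathcal{B}^{\,t}_{k_1,k_2}$ to $H=(\mathcal{B}_{i+l}\mathcal{B}_{i-1})^+_0\text{-}\mathcal{B}^{\,t+l}_{k_1+l,k_2+l}$, and check that it preserves directed edges. The vertex count matches, since $|V(H)|=(k_1+l)+(k_2+l)-(t+l)=k_1+k_2-t+l=|V(G)|$.

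The idea is that in $G$ the ear $b_i\to e_1\to\cdots\to e_l\to b_{i-1}$ runs parallel to the single body edge $(b_i,b_{i-1})$. We therefore absorb the ear into the body and reinterpret the old edge $(b_i,b_{i-1})$ as the length-$0$ ear. The body indexing decreases along the direction of the edges: the head $h_1=b_1$, $b_2$ points to $h_1$, and the wings feed into $b_t$. So we define
\[
\phi(v)=
\begin{cases}
b_{i+l-\alpha}, & \text{if } v=e_\alpha,\ \alpha=1,\dots,l,\\
b_{\alpha+l}, & \text{if } v=b_\alpha,\ \alpha=i,\dots,t,\\
v, & \text{otherwise},
\end{cases}
\]
so that $b_1,\dots,b_{i-1}$, the head and all wing vertices are fixed. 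Then $e_1,\dots,e_l$ map to $b_{i+l-1},\dots,b_i$, and the old $b_i,\dots,b_t$ are shifted to $b_{i+l},\dots,b_{t+l}$.

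Next we verify the edges class by class:
\begin{enumerate}
\item The body edges $(b_\alpha,b_{\alpha-1})$ with $\alpha\le i-1$ are fixed.
\item The body edges with $\alpha\ge i+1$ map to $(b_{\alpha+l},b_{\alpha+l-1})$.
\item The ear edges become the consecutive body edges: $(b_i,e_1)\mapsto(b_{i+l},b_{i+l-1})$, then $(e_\alpha,e_{\alpha+1})\mapsto(b_{i+l-\alpha},b_{i+l-\alpha-1})$, and finally $(e_l,b_{i-1})\mapsto(b_i,b_{i-1})$.
\item The original edge $(b_i,b_{i-1})\mapsto(b_{i+l},b_{i-1})$, which is precisely the length-$0$ ear in $H$.
\item The wing edges: the edges from $h_1$ into $w^1_{k_1},w^2_{k_2}$ are unchanged, and the last wing vertices $w^s_{t+1}$, which pointed to $b_t$, now point to $\phi(b_t)=b_{t+l}$, the top of the body of $H$.
\end{enumerate}

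The main care point is the relabelling of the wing vertices. In $H$ the wing vertices carry indices $t+l+1,\dots,k_s+l$, whereas in $G$ they carry $t+1,\dots,k_s$, so the rule "otherwise $v$" must be read as fixing the vertex while shifting its index by $l$ (equivalently, using region-independent labels). We should state this explicitly. Beyond that, we must handle the degenerate cases $i=2$ (so $b_{i-1}=h_1$) and $i=t$, and these need no special treatment. Since $|E(G)|=|E(H)|$ and $\phi$ maps edges to edges injectively, we obtain the equivalence $(u,v)\in E(G)\Leftrightarrow(\phi(u),\phi(v))\in E(H)$, so $G\cong H$.
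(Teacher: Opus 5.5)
Your proposal is correct and uses exactly the isomorphism the paper gives ($e_\alpha\mapsto b_{i+l-\alpha}$, $b_\alpha\mapsto b_{\alpha+l}$ for $\alpha\ge i$, all other vertices fixed), and your edge-by-edge check is more explicit than the paper's appeal to ``as in Lemma~\ref{lem:WB_l=0}''. Your remark that wing vertices must be read as $w^s_\beta\mapsto w^s_{\beta+l}$ is also right, since the wings of $\mathcal{B}^{\,t+l}_{k_1+l,k_2+l}$ carry indices $t+l+1,\dots,k_s+l$; this makes precise a point the paper's ``otherwise $v$'' clause leaves implicit.
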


\begin{proof}
The proof is analogous to that of Lemma~\ref{lem:WB_l=0}. We only give the corresponding isomorphism.

Let
\[
G=(\mathcal{B}_{i}\mathcal{B}_{i-1})^+_l\text{-}\mathcal{B}^{\,t}_{k_1,k_2}
\]
and
\[
H=(\mathcal{B}_{i+l}\mathcal{B}_{i-1})^+_0\text{-}\mathcal{B}^{\,t+l}_{k_1+l,k_2+l}.
\]
Define
\[
\phi:V(G)\to V(H)
\]
by
\[
\phi(v)=
\begin{cases}
b_{l+\alpha}, & \text{if } v=b_\alpha,\ \alpha=i,\dots,t, \\
b_{i+l-\alpha}, & \text{if } v=e_\alpha,\ \alpha=1,\dots,l, \\
v, & \text{otherwise}.
\end{cases}
\]
As in Lemma~\ref{lem:WB_l=0}, this map is a digraph isomorphism. Hence $G\cong H$.
\end{proof}

\begin{corollary}
To find the minimum spectral radius in the class of digraphs
\[
(\mathcal{B}_i\mathcal{B}_j)_l^+,
\]
it is sufficient to consider only the case $l=0$.
\end{corollary}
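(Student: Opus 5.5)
The argument follows Corollary~\ref{cor:WB_l0}, with Lemma~\ref{lem:BB_l=0} playing the role of Lemma~\ref{lem:WB_l=0}. For $l\geq1$, any $(s,r,i,j)$-optimized digraph $(\mathcal{B}_i\mathcal{B}_{i-1})^+_l\text{-}\mathcal{B}^{\,t}_{k_1,k_2}$ is isomorphic to
\[
(\mathcal{B}_{i+l}\mathcal{B}_{i-1})^+_0\text{-}\mathcal{B}^{\,t+l}_{k_1+l,k_2+l}.
\]
Isomorphic digraphs have the same spectral radius. Hence every optimized digraph with a nontrivial ear has a representative of the same type with an ear of length $0$, on a butterfly core with parameters $(t+l,k_1+l,k_2+l)$. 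The vertex count is unchanged by \eqref{params}: $(k_1+l)+(k_2+l)-(t+l)+0=k_1+k_2-t+l=m$.

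Next I place this representative within the $l=0$ optimization. For $l=0$ the topological polynomial is
\[
1-z^{k_1}-z^{k_2}-z^{k_1-i+j+1}-z^{k_2-i+j+1}.
\]
The root is maximized by maximizing $j-i$, and the optimum for $l=0$ is $j=i-2$, since $j=i-1$ would duplicate an existing body edge. The representative has ear from $b_{i+l}$ to $b_{i-1}$, so $j-i=-(l+1)$.

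If $l=1$, then $j-i=-2$, which is exactly the optimal $l=0$ choice $(\mathcal{B}_{i+1}\mathcal{B}_{i-1})^+_0$. Its realizability condition $|\mathcal{B}|\geq3$ holds, because the new core has $t+1$ shared vertices and $|\mathcal{B}|=t$, while the original condition $|\mathcal{B}|\geq2$ gives $t\geq3$. So this case is already covered by the $l=0$ analysis.

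If $l>1$, then $j-i=-(l+1)<-2$, so the index pair is not optimal. Within the same butterfly core, the polynomial with $j=i-2$ has a larger last pair of exponents and therefore a root at least as large. Hence the representative cannot give a smaller spectral radius than the $l=0$ optimum.

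The main point to check is this monotonicity claim. I will verify it by noting that, for fixed $z\in(0,1)$, the polynomial decreases strictly as the exponents $k_s-i+j+1$ decrease. Since every polynomial in the family is strictly decreasing from $1$ to a negative value on $(0,1)$, lowering $T$ pointwise moves the unique root to the left. A smaller root means a larger spectral radius. The other point is confirming that the shifted indices $i+l$ and $i-1$ lie within $\{2,\dots,t+l\}$, which follows directly from $2\le i-1<i\le t$.
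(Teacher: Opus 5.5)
Your proof is correct and takes the same route as the paper, which simply states that the argument is analogous to Corollary~\ref{cor:WB_l0}, using Lemma~\ref{lem:BB_l=0}; that is exactly the isomorphism-then-compare-with-the-$l=0$-optimum reasoning you carry out. Your explicit checks fill in details the paper leaves implicit: that for $l=1$ the representative satisfies $|\mathcal{B}|=t\geq 3$, and that the shifted indices stay in range (the $l=1$ versus $l>1$ split is not even needed, since the isomorphism alone already places every optimized $l\geq1$ digraph in the $l=0$ class).
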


\begin{proof}
The proof is analogous to that of Corollary~\ref{cor:WB_l0}, using Lemma~\ref{lem:BB_l=0}.
\end{proof}

\medskip

Next, consider the type
\[
(\mathcal{B}_{i}\mathcal{B}_{j})^-_l.
\]

\begin{figure}[H]
    \centering
    \includegraphics[width=0.55\textwidth]{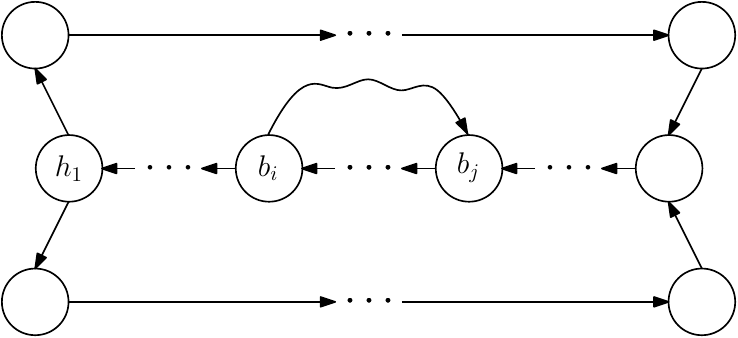}
    \caption{$(\mathcal{B}_{i}\mathcal{B}_{j})^-_l
    \text{-}\mathcal{B}_{k_s,k_r}^{\,t}$.}
    \label{fig:BB-}
\end{figure}

\medskip

\noindent
\begin{minipage}[t]{0.4\textwidth}
\small
\textit{Recurrence relations.}
\[
\begin{aligned}
    H_1&=1+zW^s_{k_s}+zW^r_{k_r}, \\
    W^s_{k_s}&=\frac{1-z^{k_s-i}}{1-z}+z^{k_s-i}B_i, \\
    W^r_{k_r}&=\frac{1-z^{k_r-i}}{1-z}+z^{k_r-i}B_i, \\
    B_i&=1+zB_{i-1}+zE_1, \\
    B_{i-1}&=\frac{1-z^{i-2}}{1-z}+z^{i-2}H_1, \\
    E_1&=\frac{1-z^l}{1-z}+z^lB_j, \\
    B_j&=\frac{1-z^{j-i}}{1-z}+z^{j-i}B_i.
\end{aligned}
\]
\end{minipage}
\hfill
\begin{minipage}[t]{0.6\textwidth}
\small
\textit{Solution for $H_1$.}
\begin{equation*}
H_1
=
\frac{
Q(z)
}{
(1-z)\left(
1-z^{k_s}
-z^{k_r}
-z^{l+j-i+1}
\right)
}.
\end{equation*}

\vspace{1em}

\textit{Topological polynomial.}
\begin{equation*}
    T\left((\mathcal{B}_i\mathcal{B}_j)^-_l;z\right)
    =
    1-z^{k_s}
    -z^{k_r}
    -z^{l+j-i+1}.
\end{equation*}
\end{minipage}

\medskip
\noindent
where
\[
Q(z)=1+z
-z^{l+j-i+1}
-z^{l+j-i+2}
+z^{k_s-i+2}
+z^{k_r-i+2}
-z^{k_s}
-z^{k_r}.
\]

\noindent
\begin{minipage}[t]{0.48\textwidth}
\small
\noindent
$(i,j)$\textit{-optimization.}

\medskip

\[
\begin{aligned}
&i=2,\quad j=t, \\[0.3em]
T\left((\mathcal{B}_2\mathcal{B}_t)^-_l;z\right)
&=
1-z^{k_s}
-z^{k_r}
-z^{l+t-2+1} \\
&=
1-z^{k_s}
-z^{k_r}
-z^{l+t-1}=0, \\[0.3em]
&|\mathcal{B}|\geq2.
\end{aligned}
\]
\end{minipage}
\hfill
\begin{minipage}[t]{0.48\textwidth}
\small
\noindent
$(s,r)$\textit{-optimization.}

\medskip

\[
\begin{aligned}
&s,r\in\{1,2\},\quad s\neq r, \\[0.3em]
T\left((\mathcal{B}_2\mathcal{B}_t)^-_l;z\right)
&=
1-z^{k_1}
-z^{k_2}
-z^{l+t-1}=0, \\[0.3em]
&|\mathcal{B}|\geq2.
\end{aligned}
\]
\end{minipage}

\medskip

Next, consider the type
\[
(\mathcal{H}\mathcal{H})_l.
\]

\begin{figure}[H]
    \centering
    \includegraphics[width=0.5\textwidth]{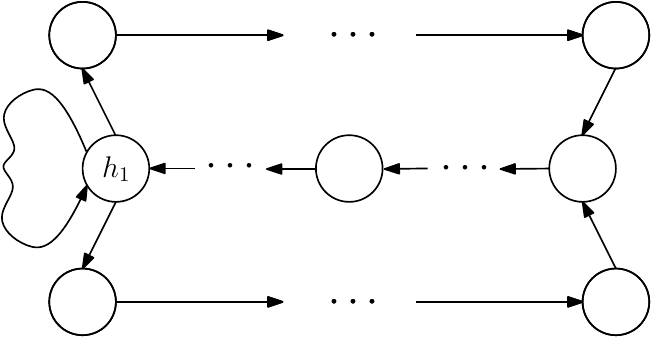}
    \caption{$(\mathcal{H}\mathcal{H})_l
    \text{-}\mathcal{B}_{k_s,k_r}^{\,t}$.}
    \label{fig:HH}
\end{figure}

\medskip

\noindent
\begin{minipage}[t]{0.4\textwidth}
\small
\textit{Recurrence relations.}
\[
\begin{aligned}
    H_1&=1+zW^s_{k_s}+zW^r_{k_r}+zE_1, \\
    W^s_{k_s}&=\frac{1-z^{k_s-1}}{1-z}+z^{k_s-1}H_1, \\
    W^r_{k_r}&=\frac{1-z^{k_r-1}}{1-z}+z^{k_r-1}H_1, \\
    E_1&=\frac{1-z^l}{1-z}+z^lH_1.
\end{aligned}
\]
\end{minipage}
\hfill
\begin{minipage}[t]{0.6\textwidth}
\small
\textit{Solution for $H_1$.}
\begin{equation*}
H_1
=
\frac{
1+2z
-z^{k_s}
-z^{k_r}
-z^{l+1}
}{
(1-z)\left(
1-z^{k_s}
-z^{k_r}
-z^{l+1}
\right)
}.
\end{equation*}

\vspace{1em}

\textit{Topological polynomial.}
\begin{equation*}
    T\left((\mathcal{H}\mathcal{H})_l;z\right)
    =
    1-z^{k_s}
    -z^{k_r}
    -z^{l+1}.
\end{equation*}
\end{minipage}

\medskip

In this case, no optimization of the parameters $s$ and $r$ is needed, since
the polynomial is symmetric in $k_s$ and $k_r$. Thus, we may write
$s=1$ and $r=2$.

The only point that requires additional attention is the case $l=0$. In this
case, the inserted ear is a loop at the vertex $h_1$. Since multiple edges
are not allowed, this loop is admissible if and only if the butterfly core
$\mathcal{B}^{\,t}_{k_1,k_2}$ does not already contain the loop $(h_1,h_1)$.

Under the assumption $k_1\geq k_2$, such a loop is already present precisely
when $k_2=t=1$. Since
\[
|\mathcal{W}^2|=k_2-t
\qquad\text{and}\qquad
|\mathcal{B}|=t-1,
\]
this excluded case is equivalent to
\[
|\mathcal{W}^2|=0
\qquad\text{and}\qquad
|\mathcal{B}|=0.
\]
Consequently, for $l=0$, the realizability condition is
\[
|\mathcal{W}^2|\geq 1
\qquad\text{or}\qquad
|\mathcal{B}|\geq 1.
\]

Thus, for $l\neq0$, we obtain
\[
T\left((\mathcal{H}\mathcal{H})_l;z\right)
=
1-z^{k_1}-z^{k_2}-z^{l+1}.
\]
For $l=0$, we obtain
\[
T\left((\mathcal{H}\mathcal{H})_0;z\right)
=
1-z^{k_1}-z^{k_2}-z,
\qquad
|\mathcal{W}^2|\geq1
\ \text{or}\
|\mathcal{B}|\geq1.
\]
\medskip

Finally, consider the two cycle types
\[
(\mathcal{W}^s_i\mathcal{W}^s_i)_l^0
\qquad\text{and}\qquad
(\mathcal{B}_i\mathcal{B}_i)_l^0.
\]
These two types are, in fact, already included in the types
\[
(\mathcal{W}^s_i\mathcal{W}^s_j)_l^-
\qquad\text{and}\qquad
(\mathcal{B}_i\mathcal{B}_j)_l^-,
\]
respectively. It is enough to substitute $j=i$ in the corresponding
recurrence relations.

For the type
\[
(\mathcal{W}^s_i\mathcal{W}^s_i)_l^0,
\]
the same argument as for
\[
(\mathcal{W}^s_i\mathcal{W}^s_j)_l^-
\]
remains valid after setting $j=i$. Hence, the corresponding polynomial
is strictly decreasing on $(0,1)$ and satisfies the conditions of
Lemma~\ref{lem:topological_equation_from_vertex_gf}.
For the type
\[
(\mathcal{B}_i\mathcal{B}_i)_l^0,
\]
the corresponding polynomial is clearly strictly decreasing on $(0,1)$,
with value $1$ at $z=0$ and a negative value at $z=1$, and hence it also
satisfies the conditions of
Lemma~\ref{lem:topological_equation_from_vertex_gf}.

Thus, we obtain the following $(s,r,i,j)$-optimized polynomials:
\[
T\left((\mathcal{W}^1_i\mathcal{W}^1_i)_l^0;z\right)
=
1-z^{k_1}-z^{k_2}-z^{l+1}+z^{k_2+l+1},
\qquad
|\mathcal{W}^1|\geq1,
\]
and
\[
T\left((\mathcal{B}_i\mathcal{B}_i)_l^0;z\right)
=
1-z^{k_1}-z^{k_2}-z^{l+1},
\qquad
|\mathcal{B}|\geq1.
\]

\section{Minimum spectral radius in the class $\mathcal{SC}_{m+2}(m)$}

\begin{table}[H]
\centering
\small
\setlength{\tabcolsep}{4pt}
\renewcommand{\arraystretch}{1.5}
\begin{tabular}{p{0.68\textwidth}|p{0.32\textwidth}}
\textbf{$(s,r,i,j)$-optimized topological polynomial}
&
\textbf{Realizability conditions}
\\
\hline
\hline

\(P_1(z):=T\left((\mathcal{W}^1_{t+2}\mathcal{B}_t)_0;z\right)
=1-z^{k_1}-z^{k_2}-z^{k_1-1}\)
&
$l=0,\ |\mathcal{W}^1|\geq2,\ |\mathcal{B}|\geq1$
\\
\hline

\(P_2(z):=T\left((\mathcal{W}^1_{t+1}\mathcal{B}_{t-1})_0;z\right)
=1-z^{k_1}-z^{k_2}-z^{k_1-1}\)
&
$l=0,\ |\mathcal{W}^1|\geq1,\ |\mathcal{B}|\geq2$
\\
\hline

\(P_3(z):=T\left((\mathcal{W}^1_{t+1}\mathcal{W}^2_{k_2})_l;z\right)
=1-z^{k_1}-z^{k_2}-z^m\)
&
$|\mathcal{W}^1|\geq1,\ |\mathcal{W}^2|\geq1$
\\
\hline

\(P_4(z):=T\left((\mathcal{W}^1_{t+1}\mathcal{H})_l;z\right)
=1-z^{k_1}-z^{k_2}-z^{k_1-t+l+1}\)
&
$|\mathcal{W}^1|\geq1,\ (|\mathcal{B}|\geq1 \text{ or } l\neq 0)$
\\
\hline

\(P_5(z):=T\left((\mathcal{W}^1_{t+2}\mathcal{H})_0;z\right)
=1-z^{k_1}-z^{k_2}-z^{k_1-1}\)
&
$l=0,\ |\mathcal{W}^1|\geq2,\ |\mathcal{B}|=0$
\\
\hline

\(P_6(z):=T\left((\mathcal{B}_2\mathcal{W}^1_{k_1})_l;z\right)
=1-z^{k_1}-z^{k_2}-z^{k_1+l-1}\)
&
$|\mathcal{W}^1|\geq1,\ |\mathcal{B}|\geq1$
\\
\hline

\(P_7(z):=T\left((\mathcal{B}_3\mathcal{H})_0;z\right)
=1-z^{k_1}-z^{k_2}-z^{k_1-1}-z^{k_2-1}\)
&
$l=0,\ |\mathcal{B}|\geq2$
\\
\hline

\(P_8(z):=T\left((\mathcal{H}\mathcal{W}^1_{k_1-1})_0;z\right)
=1-z^{k_1}-z^{k_2}-z^{k_1-1}\)
&
$l=0,\ |\mathcal{W}^1|\geq2$
\\
\hline

\(P_9(z):=T\left((\mathcal{H}\mathcal{B}_t)_l;z\right)
=1-z^{k_1}-z^{k_2}-z^{l+t}\)
&
$|\mathcal{B}|\geq1,\ (l\neq0 \text{ or } |\mathcal{W}^2|\geq 1)$
\\
\hline

\(P_{10}(z):=T\left((\mathcal{H}\mathcal{B}_{t-1})_0;z\right)
=
1-z^{k_1}
-z^{k_2}
-z^{t-1}\)
&$l=0,\ |\mathcal{B}|\geq2,\ |\mathcal{W}^2|=0$
\\
\hline

\(P_{11}(z):=T\left((\mathcal{W}^1_i\mathcal{W}^1_{i-2})^+_0;z\right)
=1-z^{k_1}-z^{k_2}-z^{k_1-1}\)
&
$l=0,\ |\mathcal{W}^1|\geq3$
\\
\hline

\(P_{12}(z):=T\left((\mathcal{W}^1_{t+1}\mathcal{W}^1_{k_1})^{-}_l;z\right)
=1-z^{k_1}-z^{k_2}-z^{k_1+l-t}+z^m\)
&
$|\mathcal{W}^1|\geq2$
\\
\hline

\(P_{13}(z):=T\left((\mathcal{B}_i\mathcal{B}_{i-2})^+_0;z\right)
=1-z^{k_1}-z^{k_2}-z^{k_1-1}-z^{k_2-1}\)
&
$l=0,\ |\mathcal{B}|\geq3$
\\
\hline

\(P_{14}(z):=T\left((\mathcal{B}_2\mathcal{B}_t)^{-}_l;z\right)
=1-z^{k_1}-z^{k_2}-z^{l+t-1}\)
&
$|\mathcal{B}|\geq2$
\\
\hline

\(P_{15}(z):=T\left((\mathcal{H}\mathcal{H})_l;z\right)
=1-z^{k_1}-z^{k_2}-z^{l+1}\)
&
$l\neq0$
\\
\hline

\(P_{16}(z):=T\left((\mathcal{H}\mathcal{H})_0;z\right)
=1-z^{k_1}-z^{k_2}-z\)
&
$l=0,\ (|\mathcal{W}^2|\geq1 \text{ or }|\mathcal{B}|\geq1)$
\\
\hline

\(P_{17}(z):=T\left((\mathcal{W}^1_i\mathcal{W}^1_i)_l^0;z\right)
=
1-z^{k_1}-z^{k_2}-z^{l+1}+z^{k_2+l+1}\)
&
$|\mathcal{W}^1|\geq1$
\\
\hline

\(P_{18}(z):=T\left((\mathcal{B}_i\mathcal{B}_i)_l^0;z\right)
=
1-z^{k_1}-z^{k_2}-z^{l+1}\)
&
$|\mathcal{B}|\geq1$

\end{tabular}
\caption{Summary of the $(s,r,i,j)$-\textit{optimized} topological polynomials and their realizability conditions.}
\label{tab:optimized-polynomials}
\end{table}

\begin{lemma}\label{lem:W1>0}
Let $G\in\mathcal{SC}_{m+2}(m)$. Unless a stronger condition is required, we always have
\[
|\mathcal{W}^1|\geq 1.
\]
\end{lemma}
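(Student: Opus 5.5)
The plan is to show that, with the standing normalization $k_1\geq k_2$, the case $|\mathcal{W}^1|=0$ cannot occur for a butterfly core $\mathcal{B}^{\,t}_{k_1,k_2}$. Then every row of Table~\ref{tab:optimized-polynomials} automatically carries the condition $|\mathcal{W}^1|\geq1$, and only the rows that ask for more (for instance $|\mathcal{W}^1|\geq2$ or $|\mathcal{W}^1|\geq3$) impose a genuinely additional restriction.

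\emph{Step 1: reduce to a statement about the core.} By Corollary~\ref{butterfly-core}, $G$ arises from a butterfly core $\mathcal{B}^{\,t}_{k_1,k_2}$ by attaching one ear. The regions $\mathcal{W}^1,\mathcal{W}^2,\mathcal{H},\mathcal{B}$ are defined on that core. Since $|\mathcal{W}^1|=k_1-t$, the claim is equivalent to $k_1>t$.

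\emph{Step 2: assume the contrary.} Suppose $k_1=t$. From $t\leq\min\{k_1,k_2\}$ and $k_1\geq k_2$ we get $k_1=k_2=t$. The two distinct cycles $\mathcal{C}_{k_1}$ and $\mathcal{C}_{k_2}$ then have the same vertex set of size $t$, which is the whole core, since the core has $k_1+k_2-t=t$ vertices.

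\emph{Step 3: derive a contradiction from the out-degrees.} Every vertex of the core other than $h_1$ has out-degree $1$, while $h_1$ has out-degree $2$, with out-neighbours $x\neq y$. A directed simple cycle through $h_1$ is determined by its first edge: from that edge on, the walk follows the unique out-edges until it returns to $h_1$. Every cycle of the core passes through $h_1$, because the subdigraph on the vertices of out-degree $1$ contains no cycle avoiding $h_1$; otherwise the core could not be strongly connected. Hence the two distinct cycles are the one starting $h_1\to x$ and the one starting $h_1\to y$.

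If the cycle starting with $h_1\to x$ visits all $t$ vertices, it passes through $y$ before returning to $h_1$. The cycle starting with $h_1\to y$ coincides with the tail of the first cycle from $y$ onward, and therefore misses $x$. Its length is then strictly less than $t$, contradicting $k_2=t$. The loop case $t=1$ falls under the same argument: a single vertex cannot carry two distinct cycles of length $1$, since multiple edges are excluded.

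\emph{Main obstacle.} The main care point is Step 3, specifically justifying that every cycle of the butterfly passes through $h_1$ and is determined by its first edge. I would argue this directly from the description of $\mathcal{B}^{\,t}_{k_1,k_2}$ as two cycles sharing the path $\mathcal{H}\cup\mathcal{B}$, using \cite[Lemma~4.3]{Klech}. The phrase ``unless a stronger condition is required'' then only records that some types need $|\mathcal{W}^1|\geq 2$ or $|\mathcal{W}^1|\geq3$, which is consistent with the lower bound established here.
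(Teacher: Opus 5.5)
Your proof is correct and follows the same route as the paper. You assume $|\mathcal{W}^1|=0$, use $k_1\geq k_2\geq t$ to force $k_1=k_2=t$, and then show that such a core $\mathcal{B}^{\,t}_{t,t}$ cannot exist. The only difference is the final contradiction. The paper simply observes that $\mathcal{B}^{\,t}_{t,t}$ would need the edge $(h_1,b_t)$ twice, whereas your out-degree argument (each cycle through $h_1$ is determined by its first edge, so the second cycle is a proper tail of the first) justifies this intrinsically, without relying on the labelled picture of the butterfly.
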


\begin{proof}
Let $G\in\mathcal{SC}_{m+2}(m)$, and suppose, for a contradiction, that
\[
|\mathcal{W}^1|=0.
\]
Then $k_1=t$. Since $k_1\geq k_2$ and $k_2\geq t$, it follows that $k_2=t$.
Thus, before inserting the ear of length $l$, we obtain the digraph
\[
\mathcal{B}_{t,t}^{\,t}.
\]
However, this digraph is not admissible in our setting, since it contains two identical edges $(h_1,b_t)$.
\end{proof}

\begin{lemma}\label{lem:W1>k}
Let $G\in\mathcal{SC}_{m+2}(m)$. Suppose that
\[
|\mathcal{W}^s|\geq k,
\]
where $k\geq 0$ and $s\in\{1,2\}$. Then
\[
k_r\leq m-l-k,
\]
where $r\in\{1,2\}$ and $r\neq s$.
\end{lemma}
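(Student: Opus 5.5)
The proof is a direct count based on the vertex identity \eqref{params}, namely $k_1+k_2-t+l=m$. First I will rewrite the size of each wing in terms of the butterfly parameters, using the partition of $V(\mathcal{B}^{\,t}_{k_1,k_2})$ into $\mathcal{W}^1,\mathcal{W}^2,\mathcal{H},\mathcal{B}$:
\[
|\mathcal{W}^s|=k_s-t, \qquad s\in\{1,2\}.
\]
Substituting $k_s=|\mathcal{W}^s|+t$ into \eqref{params} gives
\[
|\mathcal{W}^s|+t+k_r-t+l=m,
\]
that is,
\[
k_r=m-l-|\mathcal{W}^s|.
\]
Here $r\neq s$ is the index of the other cycle.

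With this identity the claim is immediate. If $|\mathcal{W}^s|\geq k$, then
\[
k_r=m-l-|\mathcal{W}^s|\leq m-l-k.
\]
The hypothesis $k\geq 0$ plays no role in the inequality. It only ensures the statement is meaningful as a realizability bound, as used with the conditions in Table~\ref{tab:optimized-polynomials}. The identity $k_1+k_2-t=|V(\mathcal{B}^{\,t}_{k_1,k_2})|$ also holds in the degenerate cases $t=k_r$ (empty wing $\mathcal{W}^r$) and $t=1$ (empty body), so no case split is needed.

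The only point needing care is the bookkeeping. The parameters $k_1,k_2,t$ here are those of the butterfly core from Corollary~\ref{butterfly-core}, and \eqref{params} is applied to that same core and ear length $l$. Relabelling via the $(s,r)$-optimization with $k_1\geq k_2$ does not affect the argument, because the identity is symmetric in $s$ and $r$. I expect no genuine obstacle; the lemma is essentially a one-line consequence of \eqref{params}.
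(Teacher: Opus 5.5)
Your proof is correct and is essentially the paper's own argument. The paper also rewrites $k_1+k_2-t+l=m$ as $k_r=m-l-(k_s-t)$ and then uses $k_s-t=|\mathcal{W}^s|\geq k$ to conclude $k_r\leq m-l-k$.
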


\begin{proof}
Since $|\mathcal{W}^s|\geq k$, we have $k_s\geq t+k$. 
Using the relation
\[
k_s+k_r-t+l=m,
\]
we obtain
\[
k_r=m-l-(k_s-t).
\]
Since $k_s-t\geq k$, it follows that
\[
k_r\leq m-l-k.
\]
\end{proof}

\begin{lemma}\label{lem:min}
Let
\[
\mathcal{A}=\{1,2,3,6,7,8,10,11,13\},
\]
and let $P_\alpha(z)$, $\alpha\in\mathcal{A}$, be the polynomials listed in Table~\ref{tab:optimized-polynomials}. Then the polynomial
\[
P_{\min}(z)=1-2z^{m-1}-z^m
\]
corresponds to the topological polynomial of a digraph with minimal spectral radius among the corresponding types. This minimum is attained by the digraphs
\[
(\mathcal{W}^1_{m-1}\mathcal{W}^2_{m-1})_0
\text{-}\mathcal{B}^{m-2}_{m-1,m-1}
\]
and
\[
(\mathcal{B}_2\mathcal{W}^1_m)_0
\text{-}\mathcal{B}^{m-1}_{m,m-1}.
\]
For $m\geq 4$, it is also attained by
\[
(\mathcal{W}^1_m\mathcal{B}_{m-2})_0
\text{-}\mathcal{B}^{m-1}_{m,m-1}.
\]
\end{lemma}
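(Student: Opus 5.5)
The plan is to compare roots via a monotonicity principle. Every polynomial in $\mathcal{A}$ has the form $1-\sum_{e\in S} z^{e}$ for a multiset $S$ of positive exponents, and is therefore strictly decreasing on $(0,1)$. Consequently, if two such polynomials $P$ and $\tilde P$ satisfy $P(z)\le \tilde P(z)$ on $(0,1)$, then the root of $P$ is at most the root of $\tilde P$. In practice this comparison holds whenever each exponent of $P$ can be paired with a distinct exponent of $\tilde P$ that is at least as large, and $\tilde P$ has no more terms. Since maximizing the root $R$ minimizes $\rho$, it suffices to show two things: each $P_\alpha$ ($\alpha\in\mathcal{A}$) is pointwise $\le P_{\min}$ on $(0,1)$ under its realizability conditions, and equality holds exactly for the listed parameter choices.

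The three-term families $P_1,P_2,P_3,P_6,P_8,P_{10},P_{11}$ are treated first. By Lemma~\ref{lem:W1>0} and Lemma~\ref{lem:W1>k}, together with $k_1\ge k_2$ and $k_1+k_2-t+l=m$, I would bound $k_1\le m$, $k_2\le m-1$, and the third exponent by $m-1$ or $m$, as appropriate.
\begin{itemize}
\item For $P_1,P_2,P_8,P_{11}$ (with $l=0$) the third exponent is $k_1-1$. Since $|\mathcal{B}|\ge1$ or $|\mathcal{W}^1|\ge2$ forces $k_2\le m-1$, the exponents are $\{k_1,k_1-1,k_2\}\le\{m,m-1,m-1\}$ termwise, with equality only when $k_1=m$, $k_2=m-1$, $t=m-1$. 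This equality case is admissible only if the realizability conditions allow it: $P_2$ requires $|\mathcal{B}|=m-2\ge2$, i.e.\ $m\ge4$, and gives the third listed digraph, while $P_1,P_8,P_{11}$ need $|\mathcal{W}^1|\ge2$, which is incompatible with $k_1=m$, $t=m-1$.
\item For $P_3$ the exponents are $k_1,k_2,m$ with $k_1,k_2\le m-1$ (both wings are nonempty). This is $\le P_{\min}$, with equality iff $k_1=k_2=m-1$, which forces $t=m-2$ and $l=0$, giving the first listed digraph.
\item For $P_6$ the exponents are $k_1,k_2,k_1+l-1$. Using $k_2\ge t+0$ with $|\mathcal{B}|\ge1$ and the vertex count, I would show $k_1+l\le m$ and $k_2\le m-1$. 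Equality then needs $l=0$, $k_1=m$, $k_2=m-1$, giving the second listed digraph.
\item $P_{10}$ has $|\mathcal{W}^2|=0$, so $k_2=t$, and its third exponent is $t-1\le m-2$. It is therefore strictly dominated.
\end{itemize}

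The four-term families $P_7$ and $P_{13}$ come next. Here I would use $k_1\le m$ and $k_2\le m-1$ (and in fact $k_1+k_2=m+t$) to show strict inequality. The idea is that $z^{k_1}+z^{k_1-1}\ge z^m+z^{m-1}$ already, and the extra positive terms $z^{k_2}+z^{k_2-1}>z^{m-1}$ make the total strictly larger than $2z^{m-1}+z^m$.

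The main obstacle is bookkeeping rather than analysis. The hardest part is verifying, in each family, that the exponent bounds extracted from the realizability conditions are sharp, and that the equality cases genuinely coincide with the listed digraphs. In particular, for $m=3$ the $P_2$ candidate drops out (it needs $|\mathcal{B}|\ge2$), which explains the restriction $m\ge4$ in the third item. I would handle this by writing, for each $\alpha$, the tight parameters $(k_1,k_2,t,l)$ explicitly and checking them against Table~\ref{tab:optimized-polynomials}.
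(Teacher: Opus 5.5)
Your proposal is correct and follows essentially the same route as the paper. Both arguments use that every $P_\alpha$, $\alpha\in\mathcal{A}$, has the form $1-\sum z^{e}$ and so is strictly decreasing on $(0,1)$, and both then compare roots by termwise domination of exponents under $k_1+k_2-t+l=m$ and the realizability conditions. You skip the paper's intermediate per-family optimized polynomials and compare each family directly with $P_{\min}$, which makes the equality cases explicit: why $P_2$ needs $m\geq4$, and why $P_1,P_8,P_{11}$ never reach equality.

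One small correction: in your first bullet, the bound $k_2\le m-1$ for $P_2$ comes from $|\mathcal{W}^1|\geq1$ (Lemmas~\ref{lem:W1>0} and~\ref{lem:W1>k}), not from $|\mathcal{B}|\geq1$.
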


\begin{proof}
For each polynomial $P_\alpha(z)$, $\alpha\in\mathcal{A}$, we maximize the
relevant exponents subject to the structural relation
\[
k_1+k_2-t+l=m
\]
and the corresponding realizability conditions from
Table~\ref{tab:optimized-polynomials}.

We first illustrate the complete optimization procedure for
\[
P_3(z)=1-z^{k_1}-z^{k_2}-z^m,
\]
whose realizability conditions are
$|\mathcal{W}^1|,|\mathcal{W}^2|\geq1$.
The remaining polynomials
$P_\alpha(z)$, $\alpha\in\mathcal{A}\smallsetminus\{3\}$,
are optimized analogously.

For every fixed $z\in(0,1)$, the function $z^a$ is strictly decreasing
with respect to the exponent $a$. Hence, increasing the exponents of the
negative terms increases the corresponding root in $(0,1)$. By
Lemmas~\ref{lem:W1>0} and~\ref{lem:W1>k}, the extremal admissible choice is
\[
k_1=k_2=m-l-1.
\]
For the same reason, the root is further increased by taking the smallest
admissible value of $l$, namely $l=0$. Therefore,
\[
P_3^{\min}(z)=1-2z^{m-1}-z^m.
\]

Applying the same optimization procedure to the remaining polynomials gives
\begin{align*}
    P_1^{\min}(z)&=1-z^{m-2}-z^{m-1}-z^m,
        &\quad (k_1,k_2)&=(m,m-2), \\
    P_2^{\min}(z)&=1-2z^{m-1}-z^m,
        &\quad (k_1,k_2)&=(m,m-1),\\
    P_3^{\min}(z)&=1-2z^{m-l-1}-z^m,
        &\quad (k_1,k_2)&=(m-l-1,m-l-1),\\
    P_6^{\min}(z)&=1-z^{m-l-1}-z^{m-l}-z^{m-1},
        &\quad (k_1,k_2)&=(m-l,m-l-1),\\
    P_7^{\min}(z)&=1-z^{m-2}-2z^{m-1}-z^m,
        &\quad (k_1,k_2)&=(m,m-1),\\
    P_8^{\min}(z)&=1-z^{m-2}-z^{m-1}-z^m,
        &\quad (k_1,k_2)&=(m,m-2),\\
    P_{10}^{\min}(z)&=1-z^{m-2}-z^{m-1}-z^m,
        &\quad (k_1,k_2,t)&=(m,m-1,m-1),\\
    P_{11}^{\min}(z)&=1-z^{m-3}-z^{m-1}-z^m,
        &\quad (k_1,k_2)&=(m,m-3),\\
    P_{13}^{\min}(z)&=1-z^{m-2}-2z^{m-1}-z^m,
        &\quad (k_1,k_2)&=(m,m-1).
\end{align*}

In the cases where the parameter $l$ still occurs, the root is maximized by
taking $l=0$. Hence the above list reduces to
\begin{align*}
    P_1^{\min}(z)&=1-z^{m-2}-z^{m-1}-z^m,
        &\quad (k_1,k_2)&=(m,m-2),\\
    P_2^{\min}(z)&=1-2z^{m-1}-z^m,
        &\quad (k_1,k_2)&=(m,m-1),\\
    P_3^{\min}(z)&=1-2z^{m-1}-z^m,
        &\quad (k_1,k_2)&=(m-1,m-1),\\
    P_6^{\min}(z)&=1-2z^{m-1}-z^m,
        &\quad (k_1,k_2)&=(m,m-1),\\
    P_7^{\min}(z)&=1-z^{m-2}-2z^{m-1}-z^m,
        &\quad (k_1,k_2)&=(m,m-1),\\
    P_8^{\min}(z)&=1-z^{m-2}-z^{m-1}-z^m,
        &\quad (k_1,k_2)&=(m,m-2),\\
    P_{10}^{\min}(z)&=1-z^{m-2}-z^{m-1}-z^m,
        &\quad (k_1,k_2,t)&=(m,m-1,m-1),\\
    P_{11}^{\min}(z)&=1-z^{m-3}-z^{m-1}-z^m,
        &\quad (k_1,k_2)&=(m,m-3),\\
    P_{13}^{\min}(z)&=1-z^{m-2}-2z^{m-1}-z^m,
        &\quad (k_1,k_2)&=(m,m-1).
\end{align*}

It remains to compare the roots of these polynomials. For every
$z\in(0,1)$,
\[
1-z^{m-2}-z^{m-1}-z^m
<
1-2z^{m-1}-z^m,
\]
since $z^{m-2}>z^{m-1}$. Moreover,
\[
1-z^{m-2}-2z^{m-1}-z^m
<
1-2z^{m-1}-z^m,
\]
and
\[
1-z^{m-3}-z^{m-1}-z^m
<
1-2z^{m-1}-z^m.
\]
Since all the corresponding topological polynomials are strictly decreasing
on $(0,1)$, these pointwise inequalities imply that the largest root in
$(0,1)$ is attained by
\[
P_2^{\min}(z)=P_3^{\min}(z)=P_6^{\min}(z)
=
1-2z^{m-1}-z^m.
\]
Thus,
\[
P_{\min}(z):=1-2z^{m-1}-z^m.
\]

The corresponding extremal parameter choices yield the digraphs
\begin{align*}
(\mathcal{W}^1_{m-1}\mathcal{W}^2_{m-1})_0
&\text{-}\mathcal{B}^{\,m-2}_{m-1,m-1},\\
(\mathcal{B}_{2}\mathcal{W}^1_{m})_0
&\text{-}\mathcal{B}^{\,m-1}_{m,m-1}.
\end{align*}
For $m\geq4$, the same polynomial is also attained by
\[
(\mathcal{W}^1_{m}\mathcal{B}_{m-2})_0
\text{-}\mathcal{B}^{\,m-1}_{m,m-1}.
\]

Therefore, $P_{\min}(z)$ has the largest root in $(0,1)$ among the
topological polynomials associated with the types under consideration.
Since the spectral radius is the reciprocal of this root, the corresponding
digraphs have minimal spectral radius among these types.
\end{proof}

\begin{definition}
For $m\geq 3$, we define the \textit{cross-chorded cycle} $\mathcal{C}^\times_m$ as the digraph with vertex set
\[
V(\mathcal{C}^\times_m)=\{v_1,v_2,\dots,v_m\}
\]
and edge set
\[
E(\mathcal{C}^\times_m)
=
\{(v_k,v_{k+1})\mid 1\leq k\leq m-1\}
\cup
\{(v_m,v_1),(v_1,v_3),(v_2,v_4)\},
\]
where the indices are understood cyclically, that is, $v_{m+r}=v_r$.
\end{definition}

\begin{figure}[H]
    \centering
    \includegraphics[width=0.37\textwidth]{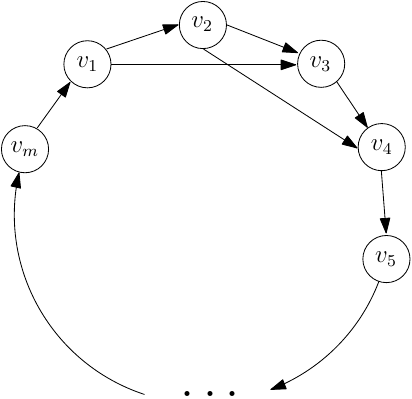}
    \caption{Digraph $\mathcal{C}^\times_m.$}
    \label{fig:min}
\end{figure}

\begin{lemma}\label{lem:isom}
For $m\geq 4$, we have
\[
\mathcal{C}^{\times}_m
\cong
(\mathcal{W}^1_m\mathcal{B}_{m-2})_0
\text{-}\mathcal{B}^{\,m-1}_{m,m-1}
\cong
(\mathcal{W}^1_{m-1}\mathcal{W}^2_{m-1})_0
\text{-}\mathcal{B}^{\,m-2}_{m-1,m-1}
\cong
(\mathcal{B}_2\mathcal{W}^1_m)_0
\text{-}\mathcal{B}^{\,m-1}_{m,m-1}.
\]

For $m=3$, we have
\[
\mathcal{C}^{\times}_3
\cong
(\mathcal{W}^1_2\mathcal{W}^2_2)_0
\text{-}\mathcal{B}^{\,1}_{2,2}
\cong
(\mathcal{B}_2\mathcal{W}^1_3)_0
\text{-}\mathcal{B}^{\,2}_{3,2}.
\]

Consequently, for every $m\geq 3$,
\[
T(\mathcal{C}^{\times}_m;z)
=
P_{\min}(z)
=
1-2z^{m-1}-z^m.
\]
\end{lemma}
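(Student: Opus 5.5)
The plan is to prove all the isomorphisms by writing down explicit vertex bijections onto $\mathcal{C}^\times_m$, in the style of Lemma~\ref{lem:WB_l=0}. I will read the labelling of $\mathcal{B}^{\,t}_{k_1,k_2}$ off the recurrence relations used in Section~4:
\begin{itemize}
\item $h_1$ has out-neighbours $w^1_{k_1}$ and $w^2_{k_2}$;
\item each wing is a path $w^s_{k_s}\to w^s_{k_s-1}\to\dots\to w^s_{t+1}\to b_t$;
\item the body is a path $b_t\to b_{t-1}\to\dots\to b_2\to h_1$;
\item if $|\mathcal{W}^s|=0$, the role of $w^s_{k_s}$ is taken by $b_t$.
\end{itemize}
In every case the strategy is the same. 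First locate a Hamiltonian cycle of length $m$ in the candidate digraph. Then check that the two remaining edges are ``skip-one'' chords $x\to x^{++}$ and $x^{+}\to x^{+++}$, where $x^{+}$ denotes the successor of $x$ on that cycle. This is exactly the pattern $(v_1,v_3),(v_2,v_4)$.

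For $(\mathcal{B}_2\mathcal{W}^1_m)_0\text{-}\mathcal{B}^{\,m-1}_{m,m-1}$, we have $\mathcal{W}^1=\{w^1_m\}$, $\mathcal{W}^2=\emptyset$ and body $b_{m-1},\dots,b_2$. The edges are:
\begin{itemize}
\item $h_1\to w^1_m$ and $h_1\to b_{m-1}$;
\item $w^1_m\to b_{m-1}$;
\item the path $b_{m-1}\to\dots\to b_2\to h_1$;
\item the ear edge $b_2\to w^1_m$.
\end{itemize}
The cycle $h_1\to w^1_m\to b_{m-1}\to\dots\to b_2\to h_1$ is Hamiltonian. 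I would set
\[
v_1=b_2,\quad v_2=h_1,\quad v_3=w^1_m,\quad v_4=b_{m-1},\quad v_5=b_{m-2},\ \dots,\quad v_m=b_3 .
\]
Under this map the two extra edges become $(v_1,v_3)$ and $(v_2,v_4)$. For $m=3$ the body is just $\{b_2\}$, so $v_4$ is $v_1$ cyclically, and the map is still correct.

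The same procedure handles $(\mathcal{W}^1_{m-1}\mathcal{W}^2_{m-1})_0\text{-}\mathcal{B}^{\,m-2}_{m-1,m-1}$. Here the wings are $\{w^1_{m-1}\}$ and $\{w^2_{m-1}\}$, and the ear is $w^1_{m-1}\to w^2_{m-1}$. The Hamiltonian cycle is
\[
h_1\to w^1_{m-1}\to w^2_{m-1}\to b_{m-2}\to\dots\to b_2\to h_1 .
\]
The chords are $h_1\to w^2_{m-1}$ and $w^1_{m-1}\to b_{m-2}$, so $v_1=h_1$, $v_2=w^1_{m-1}$, and so on. This also works for $m=3$, where $t=1$ and $b_{t}$ is replaced by $h_1$.

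The same procedure handles $(\mathcal{W}^1_m\mathcal{B}_{m-2})_0\text{-}\mathcal{B}^{\,m-1}_{m,m-1}$, with ear $w^1_m\to b_{m-2}$. The Hamiltonian cycle is again $h_1\to w^1_m\to b_{m-1}\to\dots\to b_2\to h_1$. The chords are $h_1\to b_{m-1}$ and $w^1_m\to b_{m-2}$, so $v_1=h_1$, $v_2=w^1_m$, $v_3=b_{m-1}$, $v_4=b_{m-2}$. This requires $b_{m-2}$ to be a genuine body vertex distinct from $b_{m-1}$, i.e.\ $|\mathcal{B}|\geq2$. That is precisely the condition $m\geq4$, which explains why this digraph is listed only for $m\geq4$. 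In each case I will verify edge preservation by comparing the two edge lists of size $m+2$ directly.

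For the final assertion, I would invoke the computation for the type $(\mathcal{B}_i\mathcal{W}^s_j)_l$. Substituting $i=2$, $j=k_1=m$, $k_2=m-1$ and $l=0$ gives
\[
1-z^{m}-z^{m-1}-z^{m-1}=1-2z^{m-1}-z^m .
\]
This polynomial equals $1$ at $0$, equals $-2$ at $1$, and has negative derivative on $(0,1)$. Hence Lemma~\ref{lem:topological_equation_from_vertex_gf} applies, so it is the topological polynomial of that digraph. Since isomorphic digraphs have identical path-counting generating functions, it is also the topological polynomial of $\mathcal{C}^\times_m$, for every $m\geq3$.

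The main obstacle is purely bookkeeping. One must fix the butterfly labelling and direction conventions consistently with the recurrences, and handle the degenerate small cases, namely empty wings and $t=1$ when $m=3$. There, ``$b_t$'' must be reinterpreted as $h_1$, and one must check that no duplicate edge arises.
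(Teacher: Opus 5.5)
Your proposal is correct and takes essentially the paper's route: explicit vertex bijections (your map for $(\mathcal{W}^1_m\mathcal{B}_{m-2})_0\text{-}\mathcal{B}^{\,m-1}_{m,m-1}$ coincides with the paper's $\phi_3$), followed by reading off $1-2z^{m-1}-z^m$ from the $(\mathcal{B}_i\mathcal{W}^s_j)_l$ computation via Lemma~\ref{lem:topological_equation_from_vertex_gf}. The only difference is that you send each representative directly onto $\mathcal{C}^\times_m$ using its Hamiltonian cycle and two skip-one chords, instead of routing through the paper's hub digraph $G_1$; this has the small advantage of covering $m=3$ explicitly, where the paper only says the isomorphisms are "directly seen".
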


\begin{proof}
Let
\begin{align*}
G_1&=(\mathcal{W}^1_{m}\mathcal{B}_{m-2})_0\text{-}\mathcal{B}^{\,m-1}_{m,m-1}, \\
G_2&=(\mathcal{W}^1_{m-1}\mathcal{W}^2_{m-1})_0\text{-}\mathcal{B}^{\,m-2}_{m-1,m-1}, \\
G_3&=(\mathcal{B}_{2}\mathcal{W}^1_{m})_0\text{-}\mathcal{B}^{\,m-1}_{m,m-1}.
\end{align*}
It is enough to give isomorphisms
\[
\phi_1:V(G_2)\to V(G_1),
\quad
\phi_2:V(G_3)\to V(G_1)
\quad \text{and} \quad \phi_3:V(\mathcal{C}^\times_m)\to V(G_1).
\]

Define
\[
\begin{aligned}[t]
\phi_1(v)&=
\begin{cases}
b_{m-1}, & \text{if } v=w^2_{m-1}, \\
w^1_m, & \text{if } v=w^1_{m-1}, \\
v, & \text{otherwise},
\end{cases}
\end{aligned}
\qquad\qquad
\begin{aligned}[t]
\phi_2(v)&=
\begin{cases}
h_1, & \text{if } v=b_2, \\
w^1_m, & \text{if } v=h_1, \\
b_{m-1}, & \text{if } v=w^1_m, \\
b_{\alpha-1}, & \text{if } v=b_\alpha,\ \alpha=3,\dots,m-1,
\end{cases}
\end{aligned}
\]
and
\[
\phi_3(v_\alpha)=
\begin{cases}
h_1, & \text{if } \alpha=1, \\
w^1_m, & \text{if } \alpha=2, \\
b_{m-\alpha+2}, & \text{if } \alpha=3,\dots,m.
\end{cases}
\]

As in the previous isomorphism lemmas, the maps $\phi_1$, $\phi_2$ and $\phi_3$ are digraph isomorphisms. Hence, for $m\geq 4$,
\[
\mathcal{C}^\times_m\cong G_1, \quad
G_2\cong G_1
\quad\text{and}\quad
G_3\cong G_1.
\]
Therefore
\[
\mathcal{C}^\times_m\cong G_1\cong G_2\cong G_3.
\]

For $m=3$, the first digraph $G_1$ is not realizable. The two remaining
digraphs are directly seen to be isomorphic to $\mathcal{C}^{\times}_3$,
and hence
\[
\mathcal{C}^{\times}_3
\cong
(\mathcal{W}^1_2\mathcal{W}^2_2)_0
\text{-}\mathcal{B}^{\,1}_{2,2}
\cong
(\mathcal{B}_2\mathcal{W}^1_3)_0
\text{-}\mathcal{B}^{\,2}_{3,2}.
\]

In both cases, the corresponding topological polynomial is
\[
T(\mathcal{C}^{\times}_m;z)
=
P_{\min}(z)
=
1-2z^{m-1}-z^m.
\]
\end{proof}

\begin{lemma}\label{lem:HH}
The type
\[
(\mathcal{H}\mathcal{H})_0
\]
does not attain the minimum spectral radius in the class
$\mathcal{SC}_{m+2}(m)$.
\end{lemma}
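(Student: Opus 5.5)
We show that every digraph of type $(\mathcal{H}\mathcal{H})_0$ has topological polynomial pointwise smaller on $(0,1)$ than $P_{\min}$. Then its root is strictly smaller than $R_m$, and its spectral radius is strictly larger than $\rho(\mathcal{C}^\times_m)=R_m^{-1}$.

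By Table~\ref{tab:optimized-polynomials}, such a digraph has
\[
P_{16}(z)=1-z^{k_1}-z^{k_2}-z,
\]
with $k_1+k_2-t=m$ (here $l=0$), $k_1\geq k_2$, and the realizability condition $|\mathcal{W}^2|\geq1$ or $|\mathcal{B}|\geq1$. First we note that $P_{16}$ has value $1$ at $0$, is negative at $1$, and is strictly decreasing on $(0,1)$. Hence Lemma~\ref{lem:topological_equation_from_vertex_gf} applies, and the unique root $R_{16}\in(0,1)$ of $P_{16}$ is the radius of convergence for this digraph. The same holds for $P_{\min}(z)=1-2z^{m-1}-z^m$, with root $R_m$.

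The key step is the pointwise comparison. For $z\in(0,1)$ we have
\[
P_{16}(z)<P_{\min}(z)
\iff
z^{k_1}+z^{k_2}+z>2z^{m-1}+z^m.
\]
This holds with room to spare. For $m\geq3$ we have $z\geq z^{m-1}$, $z^{k_2}\geq z^{k_1}\geq z^m$ (since $k_2\leq k_1\leq m$), and $z^{k_2}>z^{m}$. Bounding $z^{k_2}$ against $z^{m-1}$ needs $k_2\leq m-1$. This follows from $k_2\leq k_1$ and $k_1+k_2-t=m$: if $k_2=m$ then $k_1=m$ and $t=m$, so $|\mathcal{W}^1|=0$, which contradicts Lemma~\ref{lem:W1>0}. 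Summing $z\geq z^{m-1}$, $z^{k_2}\geq z^{m-1}$ and $z^{k_1}\geq z^m$, with at least one inequality strict, gives the claim. To be safe, we can simply argue $z>z^{m-1}$ strictly for $m\geq3$, which gives strictness immediately.

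Since both polynomials are strictly decreasing and $P_{16}<P_{\min}$ on $(0,1)$, we get $P_{16}(R_m)<P_{\min}(R_m)=0$. Hence $R_{16}<R_m$, and therefore $\rho=R_{16}^{-1}>R_m^{-1}=\rho(\mathcal{C}^\times_m)$.

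The only delicate point is confirming that every $(\mathcal{H}\mathcal{H})_0$ digraph is described by $P_{16}$ with admissible parameters satisfying $k_1\leq m$. This is immediate from \eqref{params} with $l=0$ and $t\geq1$. The realizability condition itself plays no role in the inequality.
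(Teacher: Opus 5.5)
Your proposal is correct and is essentially the paper's argument: a pointwise comparison $P_{16}<P_{\min}$ on $(0,1)$ resting on $z>z^{m-1}$ for $m\geq 3$. The only difference is that you use the uniform bounds $k_1\leq m$, $k_2\leq m-1$ for all admissible parameters, whereas the paper splits cases according to the realizability condition ($|\mathcal{W}^2|\geq 1$ versus $|\mathcal{W}^2|=0$, $|\mathcal{B}|\geq 1$), which, as you note, is not needed. One small correction: $k_1\leq m$ follows from $k_1+k_2-t=m$ together with $k_2\geq t$, not from $t\geq 1$.
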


\begin{proof}
For the type $(\mathcal{H}\mathcal{H})_0$, we have $k_1+k_2-t=m$ and the realizability condition
\[
|\mathcal{W}^2|\geq1
\qquad\text{or}\qquad
|\mathcal{B}|\geq1.
\]

First suppose that $|\mathcal{W}^2|\geq1$. Then $k_2-t\geq 1$.
Using $k_1+k_2-t=m$, we obtain
\[
k_1\leq m-1.
\]
Since $k_1\geq k_2$, the exponents are maximized by
\[
k_1=k_2=m-1,
\qquad
t=m-2,
\]
and hence
\[
P_{16}(z)
\leq
1-z-2z^{m-1}.
\]

Now suppose that $|\mathcal{W}^2|=0$. By the realizability condition, we must
have $|\mathcal{B}|\geq1$. Hence $k_2=t\geq2$.
The structural relation $k_1+k_2-t=m$ gives $k_1=m$.
Moreover, by Lemma~\ref{lem:W1>0},
\[
|\mathcal{W}^1|=k_1-t\geq1,
\]
so that $t\leq m-1$.
Therefore the exponents are maximized for
\[
(k_1,k_2,t)=(m,m-1,m-1),
\]
which gives
\[
P_{16}(z)
\leq
1-z-z^{m-1}-z^m.
\]

For every $z\in(0,1)$ and $m\geq3$,
\[
1-z-2z^{m-1}
<
1-z-z^{m-1}-z^m,
\]
because $z^{m-1}>z^m$.
Hence the optimal polynomial for this type is
\[
P_{16}^{\min}(z)
=
1-z-z^{m-1}-z^m.
\]

Finally,
\[
P_{16}^{\min}(z)
<
1-2z^{m-1}-z^m
=
P_{\min}(z),
\]
since $z>z^{m-1}$ for every $z\in(0,1)$ and $m\geq3$.
Thus the root of $P_{16}^{\min}$ in $(0,1)$ is strictly smaller than the root
of $P_{\min}$, and therefore the type $(\mathcal{H}\mathcal{H})_0$ cannot
attain the minimum spectral radius.
\end{proof}

\begin{lemma}\label{lem:omez_moc}
Let $G\in\mathcal{SC}_{m+2}(m)$ be described as one of the types above. Then its parameters $l$ and $t$ satisfy
\[
l+1\leq l+t\leq m-1.
\]
\end{lemma}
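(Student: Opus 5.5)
The left inequality $l+1\le l+t$ is just $t\ge 1$, which is part of the definition of the butterfly digraph $\mathcal{B}^{\,t}_{k_1,k_2}$ (namely $1\le t\le\min\{k_1,k_2\}$). So the content of the statement is the upper bound $l+t\le m-1$. I will derive it from the structural relation \eqref{params},
\[
k_1+k_2-t+l=m,
\]
which can be rewritten as
\[
l+t=m-(k_1-t)-(k_2-t)+t-t = m-|\mathcal{W}^1|-|\mathcal{W}^2|.
\]
More directly: $m-(l+t)=k_1+k_2-2t=|\mathcal{W}^1|+|\mathcal{W}^2|$. Hence it suffices to show
\[
|\mathcal{W}^1|+|\mathcal{W}^2|\geq 1 .
\]

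For this I will invoke Lemma~\ref{lem:W1>0}, which gives $|\mathcal{W}^1|\geq1$ under the standing normalization $k_1\geq k_2$. Since $|\mathcal{W}^2|=k_2-t\geq0$, the sum is at least $1$, and therefore $l+t\le m-1$.

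To be safe, I will also record the underlying reason rather than rely only on the normalization. If both wings were empty, then $k_1=k_2=t$, and the two cycles of the butterfly core would have identical vertex sets. They would then be the same cycle, or they would require a doubled edge $(h_1,b_t)$. Either possibility contradicts the definition, which requires two distinct cycles and forbids multiple edges. This is exactly the argument in the proof of Lemma~\ref{lem:W1>0}.

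The only potential obstacle is making sure the argument applies to every type in Table~\ref{tab:optimized-polynomials}, including those whose realizability conditions do not mention $\mathcal{W}^1$. This is handled by the fact that every $G\in\mathcal{SC}_{m+2}(m)$ arises from a genuine butterfly core by Corollary~\ref{butterfly-core}. The bound $k_1+k_2-2t\ge1$ depends only on that core and not on where the ear is attached, so it holds uniformly for all types.
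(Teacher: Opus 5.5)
Your proof is correct and is essentially the paper's argument: both combine the relation $k_1+k_2-t+l=m$ with $|\mathcal{W}^1|\geq 1$ from Lemma~\ref{lem:W1>0} and $|\mathcal{W}^2|\geq 0$ to obtain $m-(l+t)=|\mathcal{W}^1|+|\mathcal{W}^2|\geq 1$, and both get the left inequality from $t\geq 1$. Your observation that only $|\mathcal{W}^1|+|\mathcal{W}^2|\geq 1$ is needed, which holds without the normalization $k_1\geq k_2$, is a harmless and slightly more symmetric refinement.
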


\begin{proof}
By Lemma~\ref{lem:W1>0}, we have
\[
|\mathcal{W}^1|\geq 1.
\]
Hence $k_1\geq t+1$. Moreover, since $|\mathcal{W}^2|\geq 0$, we have $k_2\geq t$.
Using the relation
\[
k_1+k_2-t+l=m,
\]
we obtain
\[
m
=
k_1+k_2-t+l
\geq
t+l+1.
\]
Therefore
\[
l+t\leq m-1.
\]

Furthermore, since $|\mathcal{B}|=t-1\geq 0$, we have $t\geq 1$. Combining this with $l+t\leq m-1$, we obtain
\[
l+1\leq l+t\leq m-1.
\]
\end{proof}

\begin{lemma}\label{lem:m=3}
To find the minimum spectral radius in the class $\mathcal{SC}_{m+2}(m)$,
it is necessary to consider the type
\[
(\mathcal{W}^1_{t+1}\mathcal{H})_l
\]
only in the case $m=3$. Moreover, for $m=3$, the optimized digraph
\[
(\mathcal{W}^1_{3}\mathcal{H})_0\text{-}\mathcal{B}^{\,2}_{3,2}
\]
is isomorphic to $\mathcal{C}_3^\times$.
\end{lemma}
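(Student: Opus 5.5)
The plan is to show that for $m\ge 4$ the polynomial $P_4$ is pointwise strictly smaller than $P_{\min}$ on $(0,1)$, for every admissible choice of parameters. Then its root in $(0,1)$ is strictly smaller, exactly as in the comparisons in the proof of Lemma~\ref{lem:min}. For $m=3$ I will identify the optimized digraph explicitly.

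\textbf{Rewriting $P_4$.} Using the structural relation \eqref{params}, $k_1+k_2-t+l=m$, the last exponent of $P_4$ becomes $k_1-t+l+1=m-k_2+1$. Hence
\[
P_4(z)=1-z^{k_1}-z^{k_2}-z^{m-k_2+1}.
\]
By Lemma~\ref{lem:W1>k} with $|\mathcal{W}^1|\ge1$ (Lemma~\ref{lem:W1>0}) we have $k_2\le m-1$. Also $k_1=m-l-(k_2-t)\le m$, with equality if and only if $l=0$ and $k_2=t$.

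\textbf{Case analysis, comparing term by term with $1-2z^{m-1}-z^m$.}
\begin{enumerate}
\item[(a)] $k_2=1$. Then $t=1$ and $|\mathcal{B}|=0$, so the realizability condition forces $l\neq0$. Then $P_4$ contains the term $-z$, and $z>z^{m-1}$ gives $P_4<P_{\min}$, because the remaining exponents are $k_1\le m-1$ and $m$.
\item[(b)] $k_2\ge2$ and $k_1\le m-1$. Here $z^{k_1}\ge z^{m-1}$ and $z^{k_2}\ge z^{m-1}$. Moreover $z^{m-k_2+1}>z^m$, so $P_4<P_{\min}$.
\item[(c)] $k_2\ge2$ and $k_1=m$. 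Then $l=0$ and $k_2=t$, and the condition $|\mathcal{B}|\ge1$ holds automatically since $t\ge2$. The polynomial is $1-z^m-z^{k_2}-z^{m-k_2+1}$.
\begin{itemize}
\item If $k_2\le m-2$, then $z^{k_2}>z^{m-1}$ and $z^{m-k_2+1}\ge z^{m-1}$, so $P_4<P_{\min}$.
\item If $k_2=m-1$, then $m-k_2+1=2<m-1$ once $m\ge4$, so again $P_4<P_{\min}$.
\end{itemize}
\end{enumerate}
Thus for $m\ge4$ no digraph of this type reaches $R_m$. For $m=3$ the only surviving choice is $k_1=3$, $k_2=t=2$, $l=0$, which gives exactly $1-2z^2-z^3=P_{\min}$. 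This is the digraph $(\mathcal{W}^1_3\mathcal{H})_0\text{-}\mathcal{B}^{\,2}_{3,2}$.

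\textbf{The isomorphism for $m=3$.} The vertices are $h_1,w^1_3,b_2$. The butterfly edges are $h_1\to w^1_3\to b_2\to h_1$ and $h_1\to b_2$, and the ear contributes the edge $w^1_3\to h_1$. I match out-neighbourhoods with $\mathcal{C}_3^\times$, whose edges are $v_1\to v_2$, $v_2\to v_3$, $v_3\to v_1$, $v_1\to v_3$ and $v_2\to v_1$ (since $v_4=v_1$). The map is $v_1\mapsto h_1$, $v_2\mapsto w^1_3$, $v_3\mapsto b_2$, and I will check directly that it sends the five edges onto the five edges.

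The main obstacle is the bookkeeping in case (c): the realizability condition ``$|\mathcal{B}|\ge1$ or $l\neq0$'' must be used correctly so that case (a) genuinely needs $l\ge1$. I also need the observation that $m-k_2+1\le 2$ beats $m-1$ only when $m\ge4$; this is precisely what isolates $m=3$.
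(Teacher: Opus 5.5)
Your proof is correct, and it takes a somewhat different route from the paper's. The paper splits according to the realizability condition. For $|\mathcal{B}|\geq1$ it uses $t\geq2$ to get $P_4\leq P_6$ pointwise, then imports Lemma~\ref{lem:min} (the root of $P_6$ is at most $R_m$) to dispose of $t\geq3$ in one step. Only for $t=2$ does it rewrite the last exponent as $k_1+l-1=m+1-k_2$ and match exponents against $m,m-1,m-1$, which forces $k_1=m$, $l=0$, $k_2=2$ and $m=3$. The remaining branch $|\mathcal{B}|=0$, $l\neq0$ is handled by separate direct bounds.

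You instead observe that the identity $k_1-t+l+1=m-k_2+1$ holds for every $t$, not just $t=2$. So $P_4$ depends only on $(k_1,k_2)$, with $k_2\leq m-1$ and $k_1\leq m$ (equality iff $l=0$ and $k_2=t$). A single three-way split on $k_2$ and $k_1$, with termwise comparison against $P_{\min}$, then covers both of the paper's branches.

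Your version is self-contained: it never compares with $P_6$ and does not rely on Lemma~\ref{lem:min}, whose treatment of $P_6$ the paper only sketches. It also makes the equality case an explicit pointwise inequality, replacing the paper's rather informal step ``if $P_4$ were of the form $P_{\min}$''. The paper's version buys brevity by reusing earlier work.

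Your isomorphism for $m=3$ is the inverse of the paper's map $\phi$. Your edge list for $(\mathcal{W}^1_3\mathcal{H})_0\text{-}\mathcal{B}^{\,2}_{3,2}$ is correct.
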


\begin{proof}
For this type we have
\[
P_4(z)=1-z^{k_1}-z^{k_2}-z^{k_1-t+l+1}.
\]
The realizability conditions are
\[
|\mathcal{W}^1|\geq1,
\qquad
|\mathcal{B}|\geq1 \ \text{or}\ l\neq0.
\]

First assume that $|\mathcal{B}|\geq1$. Then $t\geq2$. We compare $P_4$ with
\[
P_6(z)=1-z^{k_1}-z^{k_2}-z^{k_1+l-1}.
\]
Since $t\geq2$, we have
\[
k_1-t+l+1\leq k_1+l-1.
\]
Thus, for every $z\in(0,1)$, $P_4(z)\leq P_6(z)$.
By Lemma~\ref{lem:min}, the root of $P_6$ is not larger than the root of
\[
P_{\min}(z)=1-2z^{m-1}-z^m.
\]
Hence $P_4$ can attain the minimum only in the case where equality with the extremal polynomial is possible.

This can occur only if $t=2$. In that case
\[
P_4(z)=1-z^{k_1}-z^{k_2}-z^{k_1+l-1}.
\]
From $k_1+k_2-t+l=m$ we get $k_1+k_2-2+l=m$ and hence
\[
k_1+l-1=m+1-k_2.
\]
Since $k_2\geq t=2$, it follows that $k_1+l-1\leq m-1$.
Also,
\[
k_2\leq m-l-1\leq m-1.
\]
Therefore, if $P_4$ were of the form
\[
P_{\min}(z)=1-2z^{m-1}-z^m,
\]
the exponent $m$ would have to be $k_1$. Hence $k_1=m$. Since
$k_1\leq m-l$, we get $l=0$. Substituting into
\[
k_1+k_2-2+l=m
\]
gives $k_2=2$. Thus the exponents of $P_4$ are
\[
m,\quad 2,\quad m-1.
\]
They coincide with $m,m-1,m-1$ only if $m=3$.

For $m=3$, we obtain
\[
t=2,\qquad l=0,\qquad k_1=3,\qquad k_2=2.
\]
Hence the corresponding digraph is
\[
(\mathcal{W}^1_{3}\mathcal{H})_0\text{-}\mathcal{B}^{\,2}_{3,2},
\]
with topological polynomial
\[
T\left((\mathcal{W}^1_{3}\mathcal{H})_0
\text{-}\mathcal{B}^{\,2}_{3,2};z\right)
=
1-2z^2-z^3.
\]
This is precisely $P_{\min}(z)$ for $m=3$, and the corresponding isomorphism
\[
\phi:
V\left(
\left(\mathcal{W}^1_{3}\mathcal{H}\right)_0
\text{-}\mathcal{B}^{2}_{3,2}
\right)
\to
V\left(\mathcal{C}_3^\times\right)
\]
is given by
\[
\phi(v)=
\begin{cases}
v_1, & \text{if } v=h_1,\\
v_2, & \text{if } v=w_3^1,\\
v_3, & \text{if } v=b_2.
\end{cases}
\]

It remains to consider the case
\[
|\mathcal{B}|=0,\qquad l\neq0.
\]
Then $t=1$ and $l\geq1$. Hence
\[
P_4(z)
=
1-z^{k_1}-z^{k_2}-z^{k_1-t+l+1}
=
1-z^{k_1}-z^{k_2}-z^{k_1+l}.
\]
Since
\[
k_1\leq m-l\leq m-1, \qquad k_2\leq m-l-1\leq m-2,
\]
we also have $k_1+l\leq m$.

Therefore, for every $z\in(0,1)$,
\[
z^{k_1}+z^{k_2}+z^{k_1+l}
\geq
z^{m-1}+z^{m-2}+z^m
>
2z^{m-1}+z^m.
\]
Consequently,
\[
P_4(z)<1-2z^{m-1}-z^m=P_{\min}(z)
\]
for all $z\in(0,1)$. Thus this remaining case cannot attain the minimum.
\end{proof}

\begin{lemma}\label{lem:-1}
To find the minimum spectral radius in the class
$\mathcal{SC}_{m+2}(m)$, it is not necessary to consider the type
\[
(\mathcal{W}^1_{t+2}\mathcal{H})_0.
\]
\end{lemma}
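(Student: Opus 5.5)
The plan is to show that, after parameter optimization, the topological polynomial of $(\mathcal{W}^1_{t+2}\mathcal{H})_0$ is pointwise strictly smaller on $(0,1)$ than $P_{\min}(z)=1-2z^{m-1}-z^m$. Then its root is strictly smaller, so its spectral radius is strictly larger. From Table~\ref{tab:optimized-polynomials},
\[
P_5(z)=1-z^{k_1}-z^{k_2}-z^{k_1-1},
\]
with realizability conditions $l=0$, $|\mathcal{W}^1|\geq 2$ and $|\mathcal{B}|=0$. The last condition forces $t=1$. The structural relation \eqref{params} with $l=0$ then gives $k_1+k_2-1=m$, that is, $k_2=m+1-k_1$.

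Next I bound the exponents. Since $|\mathcal{W}^1|=k_1-1\geq 2$, Lemma~\ref{lem:W1>k} applied with $s=1$, $k=2$ gives
\[
k_2\leq m-2 .
\]
In the other direction, $k_2\geq t=1$ gives $k_1\leq m$, hence $k_1-1\leq m-1$. So the three exponents satisfy
\[
k_1\leq m,\qquad k_1-1\leq m-1,\qquad k_2\leq m-2 .
\]
For $z\in(0,1)$ the map $a\mapsto z^a$ is strictly decreasing, so
\[
z^{k_1}+z^{k_2}+z^{k_1-1}\geq z^m+z^{m-2}+z^{m-1}>z^m+2z^{m-1}.
\]
Therefore $P_5(z)<P_{\min}(z)$ for every $z\in(0,1)$. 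This is the same comparison used for $P_1$, $P_8$ and $P_{10}$ in Lemma~\ref{lem:min}.

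To conclude, both polynomials are strictly decreasing on $(0,1)$, positive at $0$ and negative at $1$. The pointwise inequality then implies that the unique root of $P_5$ in $(0,1)$ lies strictly to the left of $R_m$. By Proposition~\ref{spec_rad}, every digraph of this type has spectral radius strictly larger than $R_m^{-1}$, and this value is attained by $\mathcal{C}_m^\times$ (Lemma~\ref{lem:isom}). So the type can be excluded.

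The only delicate point is the bound on $k_2$. It must come from $|\mathcal{W}^1|\geq2$, not merely from Lemma~\ref{lem:W1>0}: with $k_2=m-1$ the comparison would fail. Small $m$ causes no extra trouble, since if $m<4$ the type is simply not realizable and the claim is vacuous.
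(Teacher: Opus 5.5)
Your argument is correct and is essentially the paper's proof. You bound $k_1\le m$ and $k_2\le m-2$ (the latter from $|\mathcal{W}^1|\ge 2$), deduce $P_5(z)\le 1-z^m-z^{m-2}-z^{m-1}<P_{\min}(z)$ on $(0,1)$, and compare roots, exactly as the paper does.

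One side remark is false, though harmless. The type is realizable for $m=3$, namely $(\mathcal{W}^1_3\mathcal{H})_0\text{-}\mathcal{B}^{\,1}_{3,1}$ with $P_5(z)=1-z-z^2-z^3$. Your general inequality never uses $m\ge 4$, so this case is already covered by the main argument.
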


\begin{proof}
For this type, the topological polynomial and the realizability conditions are
$$P_{5}(z) = 1 - z^{k_{1}} - z^{k_{2}} - z^{k_{1}-1}, \quad l=0, \quad |\mathcal{W}^{1}| \ge 2, \quad |\mathcal{B}| = 0.$$

From the structural bounds, we know that $k_{1} \le m$ and $k_{2} \le m-2$. 
By substituting these maximal possible values, we obtain the following bound for $z \in (0,1)$:
$$P_{5}(z) \le 1 - z^{m} - z^{m-2} - z^{m-1}.$$

It follows that
$$1 - z^{m} - z^{m-2} - z^{m-1} < 1 - z^{m} - 2z^{m-1} = P_{\min}(z).$$

Therefore, $P_{5}(z) < P_{\min}(z)$ for all $z \in (0,1)$. Hence the corresponding root is smaller than the root of $P_{\min}(z)$, which implies that this type yields a strictly larger spectral radius and therefore need not be considered when searching for the minimum.
\end{proof}

\begin{lemma}\label{lem:-3}
To find the minimum spectral radius in the class $\mathcal{SC}_{m+2}(m)$,
it is not necessary to consider the types
\[
(\mathcal{H}\mathcal{B}_t)_l,\qquad
(\mathcal{B}_2\mathcal{B}_t)^{-}_l,\qquad
(\mathcal{H}\mathcal{H})_l.
\]
\end{lemma}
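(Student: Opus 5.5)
We have to show that $P_9$, $P_{14}$ and $P_{15}$ can never beat $P_{\min}(z)=1-2z^{m-1}-z^m$. All three have the same shape,
\[
1-z^{k_1}-z^{k_2}-z^{a},
\]
where $a=l+t$ for $P_9$, $a=l+t-1$ for $P_{14}$, and $a=l+1$ for $P_{15}$. Each is strictly decreasing on $(0,1)$. So, as in Lemma~\ref{lem:-1}, it suffices to bound every exponent from above and obtain a pointwise inequality $P_\alpha(z)\le 1-z^{e_1}-z^{e_2}-z^{e_3}<P_{\min}(z)$ on $(0,1)$. This places the root of $P_\alpha$ strictly below $R_m$.

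\textbf{Bounding the third exponent.} Lemma~\ref{lem:omez_moc} gives $l+t\le m-1$. Hence $a\le m-1$ for $P_9$ and $a\le m-2$ for $P_{14}$. For $P_{15}$ we have $t\ge1$, so $a=l+1\le l+t\le m-1$.

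\textbf{Bounding the cycle lengths.} From $k_1+k_2-t+l=m$ together with Lemma~\ref{lem:W1>0} and Lemma~\ref{lem:W1>k}, we get $k_1\le m-l$ and $k_2\le m-l-1$ (and $k_2\le k_1$).

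\textbf{Showing all three exponents cannot be large at once.} This is the key step. If the bounds were independent, we would get $z^{k_1}+z^{k_2}+z^a\ge z^m+z^{m-1}+z^{m-1}$, which only yields $\le$, not $<$. To get strictness, we use the coupling: $a$ grows with $l$ and $t$, while $k_1,k_2$ shrink as $l$ grows.

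\emph{Case $P_{15}$.} We have $k_2\le m-l-1$ and $a=l+1$, so $k_2+a\le m$. This forces $\min(k_2,a)\le m/2$. For $m\ge3$ that gives $\min(k_2,a)\le m-2$. Hence $z^{k_2}+z^a\ge z^{m-2}+z^{m-1}>2z^{m-1}$. Since $z^{k_1}\ge z^m$, we conclude $P_{15}(z)<P_{\min}(z)$.

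\emph{Case $P_9$ and $P_{14}$.} Here we use $k_2\ge t$, which holds because $t$ counts shared vertices, so $t\le\min\{k_1,k_2\}$. We also use $|\mathcal{B}|\ge1$, i.e. $t\ge2$, and $k_1\ge t+1$. From the structural relation, $k_2=m-l-(k_1-t)$. Two subcases:
\begin{itemize}
\item If $k_1\le m-1$, then $z^{k_1}\ge z^{m-1}$. Also $k_2\le k_1\le m-1$ and $a\le m-1$. One of the inequalities is strict unless $k_1=k_2=a=m-1$. That equality case would require $l=0$ and $t=m-1$ (for $P_9$), and then $k_1\ge t+1=m$, a contradiction. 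So some term exceeds $z^{m-1}$ or $z^m$ as needed. Concretely, $z^{k_1}+z^{k_2}+z^a\ge 3z^{m-1}>2z^{m-1}+z^m$.
\item If $k_1=m$, then $l=0$ and $k_2=t$. For $P_9$, $a=t=k_2$, so the sum is $z^m+2z^{t}$. Since $k_1\ge t+1$, we have $t\le m-1$, and equality $t=m-1$ is exactly the critical configuration. For $P_{14}$, $a=t-1\le m-2$, so the sum is strictly larger, giving strict inequality.
\end{itemize}

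\textbf{Main obstacle.} The obstacle is this boundary configuration for $P_9$: $(k_1,k_2,t,l)=(m,m-1,m-1,0)$ gives exactly $P_{\min}$. I expect the resolution to come from the realizability condition of $P_9$. With $l=0$, $P_9$ requires $|\mathcal{W}^2|\ge1$, i.e. $k_2\ge t+1$. This contradicts $k_2=t$, so the case is excluded and strict inequality holds throughout. I would also check small $m$ (e.g. $m=3$) separately, where the bounds $m-2$ versus $m-1$ degenerate.
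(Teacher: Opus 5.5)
Your argument is correct and is essentially the paper's: bound each exponent via Lemmas~\ref{lem:W1>0}, \ref{lem:W1>k} and~\ref{lem:omez_moc} to get $P_\alpha<P_{\min}$ pointwise on $(0,1)$. As you found, the realizability condition ($l\neq0$ or $|\mathcal{W}^2|\geq1$) of $P_9$ rules out the only tight configuration $(k_1,k_2,t,l)=(m,m-1,m-1,0)$.

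The paper's route differs only in bookkeeping: it splits $P_9$ directly along that disjunction, uses $l\geq1$ for $P_{15}$ where you use $k_2+(l+1)\leq m$, and needs no case split for $P_{14}$ because $l+t-1\leq m-2$ already gives $z^{m}+z^{m-1}+z^{m-2}$. Your equality-case remark in the subcase $k_1\leq m-1$ is superfluous, since $3z^{m-1}>2z^{m-1}+z^m$ is already strict.
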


\begin{proof}
We prove that, for every admissible choice of parameters,
\[
P_\alpha(z)<P_{\min}(z),\qquad \alpha=9,14,15,
\]
for all $z\in(0,1)$, where
\[
P_{\min}(z)=1-2z^{m-1}-z^m.
\]
Throughout the proof, we use Lemmas~\ref{lem:W1>0}, \ref{lem:W1>k}, and~\ref{lem:omez_moc} to obtain the required bounds.

First, consider
\[
P_9(z)
=
T\left((\mathcal{H}\mathcal{B}_t)_l;z\right)
=
1-z^{k_1}-z^{k_2}-z^{l+t}.
\]
If $l\neq0$, then $l\geq1$, and
\[
k_1\leq m-l\leq m-1,\qquad k_2\leq m-l-1\leq m-2,\qquad l+t\leq m-1.
\]
Hence, for $z\in(0,1)$,
\[
z^{k_1}+z^{k_2}+z^{l+t}
\geq
2z^{m-1}+z^{m-2}
>
2z^{m-1}+z^m.
\]
Therefore
\[
P_9(z)<P_{\min}(z).
\]

If instead $|\mathcal{W}^2|\geq1$, then
\[
k_1\leq m-l-1,\qquad k_2\leq m-l-1,\qquad l+t\leq m-1.
\]
Even for $l=0$, all three exponents are at most $m-1$. Therefore
\[
z^{k_1}+z^{k_2}+z^{l+t}
\geq
3z^{m-1}
>
2z^{m-1}+z^m,
\]
and again
\[
P_9(z)<P_{\min}(z).
\]

Next, consider
\[
P_{14}(z)
=
T\left((\mathcal{B}_2\mathcal{B}_t)^{-}_l;z\right)
=
1-z^{k_1}-z^{k_2}-z^{l+t-1}.
\]
Here
\[
k_1\leq m-l,\qquad k_2\leq m-l-1,\qquad l+t-1\leq m-2.
\]
In particular,
\[
k_1\leq m,\qquad k_2\leq m-1,\qquad l+t-1\leq m-2.
\]
Hence
\[
z^{k_1}+z^{k_2}+z^{l+t-1}
\geq
z^m+z^{m-1}+z^{m-2}
>
z^m+2z^{m-1}.
\]
Thus
\[
P_{14}(z)<P_{\min}(z).
\]

Finally, consider
\[
P_{15}(z)
=
T\left((\mathcal{H}\mathcal{H})_l;z\right)
=
1-z^{k_1}-z^{k_2}-z^{l+1},
\qquad l\neq0.
\]
Since $l\geq1$, the bounds
\[
k_1\leq m-l,\qquad k_2\leq m-l-1,\qquad l+1\leq m-1
\]
give
\[
k_1\leq m-1,\qquad k_2\leq m-2,\qquad l+1\leq m-1.
\]
Therefore
\[
z^{k_1}+z^{k_2}+z^{l+1}
\geq
2z^{m-1}+z^{m-2}
>
2z^{m-1}+z^m,
\]
and hence
\[
P_{15}(z)<P_{\min}(z).
\]

Consequently, for each of the listed types, the corresponding polynomial is strictly smaller than $P_{\min}$ on $(0,1)$. Hence its root in $(0,1)$ is smaller than the root of $P_{\min}$, and these types need not be considered when searching for the minimum.
\end{proof}

\begin{lemma}\label{lem:min-nisom-m=3}
The type
\[
\left(\mathcal{W}^1_{t+1}\mathcal{W}^1_{k_1}\right)_l^{-}
\text{-}\mathcal{B}^{\,t}_{k_1,k_2}
\]
does not attain the minimum spectral radius in the class
$\mathcal{SC}_{m+2}(m)$ for any admissible choice of parameters whenever
$m\geq 4$.

On the other hand, for $m=3$, the digraph
\[
\left(\mathcal{W}^1_{2}\mathcal{W}^1_{3}\right)_0^{-}
\text{-}\mathcal{B}^{\,1}_{3,1}
\]
attains the minimum in the class $\mathcal{SC}_{5}(3)$. Moreover,
\[
\rho\left(
(\mathcal{W}^1_{2}\mathcal{W}^1_{3})^-_0
\text{-}\mathcal{B}^1_{3,1}
\right)
=
\rho(\mathcal{C}_3^\times)
=
\frac{1+\sqrt{5}}{2}.
\]
but
\[
\left(\mathcal{W}^1_{2}\mathcal{W}^1_{3}\right)_0^{-}
\text{-}\mathcal{B}^{\,1}_{3,1}
\ncong
\mathcal{C}^\times_3.
\]
\end{lemma}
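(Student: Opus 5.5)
The plan is to use the factorisation of $P_{12}$ given by the structural relation $k_1+k_2-t+l=m$. I would then bound it by a one-parameter expression, check the sign at the root $R_m$ of $P_{\min}$, and finally treat $m=3$ by direct computation.

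\textbf{Step 1 (factorisation).} Put $a:=k_1+l-t$. Then $m=a+k_2$, so $z^m=z^a z^{k_2}$ and
\[
P_{12}(z)=1-z^{k_1}-z^{k_2}-z^{a}+z^{a+k_2}=(1-z^{k_2})(1-z^{a})-z^{k_1}.
\]
Lemma~\ref{lem:topological_equation_from_vertex_gf} applies to $P_{12}$, so it is strictly decreasing on $(0,1)$, and likewise $P_{\min}$. Hence it suffices to show $P_{12}(R_m)<0$, where $R_m$ is the root of $P_{\min}$. That inequality puts the root of $P_{12}$ strictly below $R_m$ and gives a strictly larger spectral radius.

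\textbf{Step 2 (bound via AM--GM).} Expand $(1-z^{k_2})(1-z^a)=1-(z^{k_2}+z^{a})+z^m$. Since $k_2+a=m$, AM--GM gives $z^{k_2}+z^{a}\geq 2z^{m/2}$. The realizability condition $|\mathcal{W}^1|\geq2$ together with $k_2\geq t$ gives $k_1\leq a+t\leq a+k_2=m$, hence $z^{k_1}\geq z^m$. Combining,
\[
P_{12}(z)\leq 1-2z^{m/2}\qquad (z\in(0,1)).
\]
It therefore suffices to prove $R_m^{m/2}>1/2$.

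\textbf{Step 3 (the key estimate at the root).} From $1=2R^{m-1}+R^m=R^{m-1}(2+R)$ we get $R^{m-1}=1/(2+R)>1/3$. Hence
\[
R^{m/2}=\bigl(R^{m-1}\bigr)^{\frac{m}{2(m-1)}}>3^{-\frac{m}{2(m-1)}}.
\]
For $m\geq5$ the exponent is at most $5/8$, and $3^{-5/8}\approx0.503>1/2$. This is the main obstacle, since the margin is thin. If this numeric check proves too tight, I would instead keep the exact factor $1/(2+R)$ and use $R<1$ together with the lower bound on $R$ coming from $P_{\min}$.

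For $m=4$ the estimate is too weak, so I would enumerate directly. The conditions $k_1\geq t+2$, $k_2\geq t$ and $k_1+k_2-t+l=4$ leave only a few parameter choices. For example $(k_1,k_2,t,l)=(4,2,2,0)$ gives $P_{12}=(1-z^2)^2-z^4=1-2z^2$, whose root $1/\sqrt2$ satisfies $P_{\min}(1/\sqrt2)>0$. The other choices, such as $k_2=1$, would be handled the same way.

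\textbf{Step 4 ($m=3$).} Here $|\mathcal{W}^1|\geq2$ forces $k_1=3$, $t=1$, $k_2=1$, $l=0$. Then
\[
P_{12}=(1-z)(1-z^2)-z^3=1-z-z^2 .
\]
This polynomial divides $P_{\min}=1-2z^2-z^3=(1+z)(1-z-z^2)$, so both have root $(\sqrt5-1)/2$, and both spectral radii equal $(1+\sqrt5)/2$. By Lemma~\ref{lem:isom}, this equals $\rho(\mathcal{C}_3^\times)$, which is minimal.

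For non-isomorphism: the core $\mathcal{B}^1_{3,1}$ has $k_2=t=1$, so it contains the loop $(h_1,h_1)$. The digraph $\mathcal{C}_3^\times$ has no loop, since $(v_2,v_4)=(v_2,v_1)$ cyclically. Isomorphisms preserve loops, so the two digraphs are not isomorphic.
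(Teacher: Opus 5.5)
Your argument is correct, and in Steps 1, 2 and 4 it is the paper's argument. The shared ingredients are the bound $P_{12}(z)\le 1-z^{k_2}-z^{m-k_2}\le 1-2z^{m/2}$ (from $k_1\le m$ and AM--GM), evaluation at the root of $P_{\min}$, the factorisation $1-2z^2-z^3=(1+z)(1-z-z^2)$ for $m=3$, and the loop at $h_1$ for non-isomorphism.

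The only divergence is Step 3. You use $R_m^{m-1}=1/(2+R_m)>1/3$ and raise it to the power $m/(2(m-1))$. This works for $m\ge5$, and the comparison can be made exact: $3^{-5/8}>1/2$ is equivalent to $3^5=243<256=2^8$. It fails at $m=4$, which forces your separate case. The paper instead bounds the root itself from below. Since $P_{\min}(2/3)=1-4(2/3)^m>0$ for $m\ge4$, we get $R_m>2/3$. Because $x\mapsto x/(2+x)$ is increasing, $R_m^m=R_m/(2+R_m)>1/4$, i.e.\ $2R_m^{m/2}>1$, uniformly for all $m\ge4$. This is essentially the fallback you mention.

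Your $m=4$ enumeration is left unfinished as written, but it is unnecessary. For $m=4$ your Step 2 bound reads $P_{12}(z)\le 1-2z^2$ for every admissible parameter choice. So the single computation you already made, $P_{\min}(1/\sqrt2)=\tfrac34-\tfrac{1}{\sqrt2}>0$, gives $R_4>1/\sqrt2$. Hence $P_{12}(R_4)\le 1-2R_4^2<0$ holds simultaneously in all four admissible cases $(k_1,k_2,t,l)\in\{(4,1,1,0),(3,1,1,1),(3,2,1,0),(4,2,2,0)\}$.

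One small inversion in Step 1: strict monotonicity of $P_{12}$ on $(0,1)$ is a hypothesis of Lemma~\ref{lem:topological_equation_from_vertex_gf}, not a consequence of it. The paper verifies it separately, via the derivative computation in Section~4. For the comparison you only need $P_{12}(0)=1>0$, $P_{12}(R_m)<0$, and uniqueness of the root in $(0,1)$.
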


\begin{proof}
The topological polynomial of the type
\[
\left(\mathcal{W}^1_{t+1}\mathcal{W}^1_{k_1}\right)_l^{-}
\text{-}\mathcal{B}^{\,t}_{k_1,k_2}
\]
is
\[
P_{12}(z)
=
1-z^{k_1}-z^{k_2}-z^{k_1+l-t}+z^m,
\]
with the realizability condition
\[
|\mathcal{W}^1|\geq 2.
\]

Assume first that $m\geq4$. Since $k_1\leq m-l\leq m$, we have, for every $z\in(0,1)$,
\[
-z^{k_1}+z^m\leq0.
\]
Hence
\[
P_{12}(z)
\leq
1-z^{k_2}-z^{k_1+l-t}.
\]
Using the relation
\[
k_1+k_2-t+l=m,
\]
we obtain
\[
k_1+l-t=m-k_2,
\]
and therefore
\[
P_{12}(z)
\leq
1-z^{k_2}-z^{m-k_2}.
\]
By the arithmetic-geometric mean inequality,
\[
z^{k_2}+z^{m-k_2}\geq 2z^{m/2}.
\]
Therefore,
\[
P_{12}(z)\leq 1-2z^{m/2}.
\]

Let $z_0\in(0,1)$ be the root of $P_{\min}$. That is
\begin{align*}
1-2z_0^{m-1}-z_0^m&=0, \\
z_0^{m-1}(2+z_0)&=1,
\end{align*}
and hence
\begin{equation}\label{z_0-to-m}
z_0^{m-1}=\frac{1}{2+z_0}.
\end{equation}
Multiplying by $z_0$, we get
\begin{equation}\label{z_0}
z_0^m=\frac{z_0}{2+z_0}.
\end{equation}

We now show that $z_0>2/3$ for every $m\geq4$. Indeed,
\[
P_{\min}\left(\frac23\right)
=
1-2\left(\frac23\right)^{m-1}
-\left(\frac23\right)^m
=
1-4\left(\frac23\right)^m
>0
\]
for all $m\geq4$. Therefore $z_0>2/3$. Substituting this estimate into
\eqref{z_0}, we obtain
\[
z_0^m>\frac14.
\]
Consequently,
\[
2z_0^{m/2}
>
2\sqrt{\frac14}
=
1.
\]
Thus
\begin{equation}\label{AM-GM}
P_{12}(z_0)
\leq
1-2z_0^{m/2}
<
0.
\end{equation}
Since $P_{12}(0)=1$, the root of $P_{12}$ in $(0,1)$ is smaller than
$z_0$, the root of $P_{\min}$. Hence this type cannot attain the minimum
for $m\geq4$.

It remains to consider the case $m=3$. The condition
\[
|\mathcal{W}^1|\geq2
\]
implies $k_2\leq m-l-2=1-l$. Since $t\geq1$ and $k_2\geq t$, we must have
\[
l=0,\qquad k_2=1.
\]
Then necessarily
\[
t=1,\qquad k_1=3.
\]
Thus, for $m=3$, the only admissible choice of parameters is
\[
(k_1,k_2,t,l)=(3,1,1,0).
\]
This choice corresponds to the digraph
\[
\left(\mathcal{W}^1_{2}\mathcal{W}^1_{3}\right)_0^{-}
\text{-}\mathcal{B}^{\,1}_{3,1},
\]
whose topological polynomial is
\[
T\left(
\left(\mathcal{W}^1_{2}\mathcal{W}^1_{3}\right)_0^{-}
\text{-}\mathcal{B}^{\,1}_{3,1};z
\right)
=
1-z-z^2.
\]

For $m=3$, we have
\[
P_{\min}(z)
=
T(\mathcal{C}^\times_3;z)
=
1-2z^2-z^3
=
(1+z)(1-z-z^2).
\]
Therefore the two polynomials
\[
T\left(
\left(\mathcal{W}^1_{2}\mathcal{W}^1_{3}\right)_0^{-}
\text{-}\mathcal{B}^{\,1}_{3,1};z
\right) \quad \text{ and } \quad T(\mathcal{C}^\times_3;z) 
\]
have the same unique root  $R\in(0,1)$, namely
\[
R=\frac{\sqrt5-1}{2}.
\]
Hence the corresponding digraphs have the same spectral radius
\[
\rho=R^{-1}=\frac{1+\sqrt5}{2}.
\]

Finally, the two digraphs are not isomorphic. Indeed, the digraph
\[
\left(\mathcal{W}^1_{2}\mathcal{W}^1_{3}\right)_0^{-}
\text{-}\mathcal{B}^{\,1}_{3,1}
\]
contains the loop $(h_1,h_1)$ (Figure~\ref{fig:min2}), whereas $\mathcal{C}^\times_3$ contains no loop.
Thus
\[
\left(\mathcal{W}^1_{2}\mathcal{W}^1_{3}\right)_0^{-}
\text{-}\mathcal{B}^{\,1}_{3,1}
\ncong
\mathcal{C}^\times_3.
\]
\end{proof}

\begin{figure}[H]
    \centering
    \includegraphics[width=0.18\textwidth]{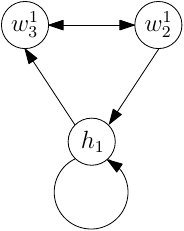}
    \caption{Digraph $\left(\mathcal{W}^1_{2}\mathcal{W}^1_{3}\right)_0^{-}
\text{-}\mathcal{B}^{\,1}_{3,1}.$}
    \label{fig:min2}
\end{figure}

\begin{lemma}\label{lem:-1_min}
To find the minimum spectral radius in the class $\mathcal{SC}_{m+2}(m)$,
the cycle type
\[
(\mathcal{B}_i\mathcal{B}_i)^0_l
\]
need not be considered.
The cycle type
\[
(\mathcal{W}^1_i\mathcal{W}^1_i)^0_l
\]
also does not attain the minimum for $m\geq 4$. For $m=3$, the only exceptional case is
\[
(\mathcal{W}^1_3\mathcal{W}^1_3)_0^0
\text{-}\mathcal{B}^{\,2}_{3,2}.
\]
This digraph has the same spectral radius as the minimizers already obtained for
$m=3$, and it is isomorphic to
\[
(\mathcal{W}^1_{2}\mathcal{W}^1_{3})^-_0
\text{-}\mathcal{B}^1_{3,1}.
\]
\end{lemma}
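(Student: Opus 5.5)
The plan is to work directly with the two $(s,r,i,j)$-optimized polynomials from Table~\ref{tab:optimized-polynomials},
\[
P_{18}(z)=1-z^{k_1}-z^{k_2}-z^{l+1},\qquad P_{17}(z)=1-z^{k_1}-z^{k_2}-z^{l+1}+z^{k_2+l+1},
\]
and compare each with $P_{\min}(z)=1-2z^{m-1}-z^m$ on $(0,1)$. I would use the same exponent-bounding technique as in Lemma~\ref{lem:-3}, and the AM--GM argument of Lemma~\ref{lem:min-nisom-m=3} where the first technique is too weak.

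\textbf{Type $(\mathcal{B}_i\mathcal{B}_i)^0_l$.} Its polynomial has the same shape as $P_{15}$. By Lemmas~\ref{lem:W1>0} and~\ref{lem:omez_moc} we get
\[
k_1\le m-l,\qquad l+1\le m-1 .
\]
Since $|\mathcal{B}|\ge1$, we have $t\ge2$. With $k_2\ge t$ and $k_1\ge t+1$, the relation $k_1+k_2-t+l=m$ then gives $k_2\le m-l-1\le m-1$. Hence
\[
z^{k_1}+z^{k_2}+z^{l+1}\ge z^m+2z^{m-1}.
\]
Equality of exponents would require $k_1=m$, $l=0$, $k_2=m-1$ and $l+1=m-1$ simultaneously, which forces $m=2$. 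So for $m\ge3$ I would show at least one exponent is strictly smaller; a short case check ($l=0$ gives $l+1=1<m-1$) yields strict inequality, so $P_{18}<P_{\min}$ on $(0,1)$.

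\textbf{Type $(\mathcal{W}^1_i\mathcal{W}^1_i)^0_l$.} I would rewrite the polynomial as
\[
P_{17}(z)=1-z^{k_1}-z^{k_2}-z^{l+1}\bigl(1-z^{k_2}\bigr).
\]
The difficulty is the positive term, which mirrors $P_{12}$. I would use the AM--GM route from Lemma~\ref{lem:min-nisom-m=3}. Since $k_2+l+1\ge$ something at most $m$, and $-z^{k_1}+z^{k_2+l+1}\le 0$ whenever $k_2+l+1\ge k_1$, one has $P_{17}\le 1-z^{k_2}-z^{l+1}$ in that case. If instead $k_1>k_2+l+1$, I would directly use $-z^{k_2}-z^{l+1}+z^{k_2+l+1}\le -z^{\min}$ together with the bound $k_1\le m-l$.

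To get a two-term bound of the AM--GM type, I need $k_2+(l+1)\le m$. This follows from $k_1\ge t+1$ and $k_1+k_2-t+l=m$. Then
\[
z^{k_2}+z^{l+1}\ge 2z^{(k_2+l+1)/2}\ge 2z^{m/2},
\]
and the estimate $z_0^m>1/4$ for $m\ge4$ from the proof of Lemma~\ref{lem:min-nisom-m=3} gives $P_{17}(z_0)<0$. Hence the root is smaller and the spectral radius is strictly larger.

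The main obstacle is the sign bookkeeping between $z^{k_1}$ and $z^{k_2+l+1}$. If the clean inequality $k_2+l+1\ge k_1$ fails, I would first try to bound $1-z^{k_1}-z^{k_2}-z^{l+1}+z^{k_2+l+1}\le 1-z^{k_1}-z^{\min(k_2,l+1)}$ and then apply AM--GM to $k_1+\min(k_2,l+1)\le m$.

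\textbf{The case $m=3$.} Here I would enumerate the admissible parameters, with $|\mathcal{W}^1|\ge1$ and $k_1\ge k_2\ge t\ge1$:

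\begin{itemize}
\item $(k_1,k_2,t,l)=(2,1,1,1)$ gives $1-z-z^2$.
\item $(3,2,2,0)$ gives $1-z^3-z^2-z+z^3=1-z-z^2$.
\item $(3,1,1,0)$, if realizable, gives $1-2z^3$-type expressions to compare.
\end{itemize}

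I expect only $(\mathcal{W}^1_3\mathcal{W}^1_3)^0_0\text{-}\mathcal{B}^2_{3,2}$ to survive, with polynomial $1-z-z^2$ and root $(\sqrt5-1)/2$. The other cases I would discard by checking either non-realizability (for example a duplicate loop) or a strictly smaller root. Finally, I would exhibit an explicit vertex bijection to $(\mathcal{W}^1_2\mathcal{W}^1_3)^-_0\text{-}\mathcal{B}^1_{3,1}$. Both digraphs contain a loop plus a $3$-cycle with one chord, so I would map the looped vertex to $h_1$ and follow the cycle orientation.
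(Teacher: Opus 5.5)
Your treatment of $(\mathcal{B}_i\mathcal{B}_i)^0_l$ is correct. Comparing $P_{18}$ directly with $P_{\min}$ through exponent bounds is a valid, self-contained alternative to the paper, which instead shows $P_{18}<P_{17}$ with the same parameters and then relies on the analysis of $P_{17}$. Your case $k_2+l+1\ge k_1$ for $(\mathcal{W}^1_i\mathcal{W}^1_i)^0_l$ is also fine.

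\textbf{The gap is the complementary case $k_1>k_2+l+1$.} You flag it as the obstacle but do not resolve it. The inequality $k_1+\min(k_2,l+1)\le m$ is false: $(k_1,k_2,t,l)=(m,1,1,0)$ is admissible, falls in this case, and gives $m+1$. Independently of AM--GM, the intermediate bound $P_{17}\le 1-z^{k_1}-z^{\min(k_2,l+1)}$ is too weak. For $m=4$ it reads $1-z-z^4$, which is about $+0.02$ at the root $z_0\approx0.717$ of $1-2z^3-z^4$. The term you discard, $z^{\max(k_2,l+1)}\bigl(1-z^{\min(k_2,l+1)}\bigr)$, is exactly what makes $P_{17}(z_0)$ negative.

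The missing idea is to shrink the ear term rather than drop it. Since $|\mathcal{W}^1|\ge1$ gives $l+1\le k_1-t+l$, since $1-z^{k_2}>0$, and since $k_2+(k_1-t+l)=m$, you get, for all admissible parameters,
\[
P_{17}(z)\le 1-z^{k_1}-z^{k_2}-z^{m-k_2}+z^{m}\le 1-z^{k_2}-z^{m-k_2}\le 1-2z^{m/2}.
\]
This is exactly the $P_{12}$ estimate, and it is in substance the paper's route. For $|\mathcal{W}^1|\ge2$ the paper proves $P_{17}<P_{12}$ and invokes Lemma~\ref{lem:min-nisom-m=3}; note that $k_1>k_2+l+1$ can only occur in this situation. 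For $|\mathcal{W}^1|=1$ it uses $k_2+l+1=m$ directly.

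\textbf{Your $m=3$ analysis also contains errors that change the conclusion.}
\begin{itemize}
\item The admissible parameters are $(k_1,k_2,t,l)\in\{(2,1,1,1),(2,2,1,0),(3,1,1,0),(3,2,2,0)\}$, and you omit $(2,2,1,0)$.
\item Substituting $(2,1,1,1)$ into $P_{17}$ gives $1-z^2-z-z^2+z^3=1-z-2z^2+z^3$, not $1-z-z^2$. Its cycles have lengths $1,2,2$, with the loop at $h_1$ disjoint from the ear $2$-cycle. With your value, this digraph would be a third minimizer, isomorphic to neither known one, which contradicts the uniqueness you want to prove. With the correct value, $1-z_0-z_0^2=0$ gives $P_{17}(z_0)=-z_0^2(1-z_0)<0$. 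The omitted case $(2,2,1,0)$ yields the same polynomial.
\item The case $(3,1,1,0)$ is realizable: the existing loop sits at $h_1$ and the new one at a wing vertex. So it cannot be dismissed as non-realizable. Its polynomial is $(1-z)^2-z^3$, which is negative at $z_0=(\sqrt5-1)/2$.
\end{itemize}
After these corrections only $(3,2,2,0)$ survives. Your isomorphism sketch (looped vertex to looped vertex, remaining $2$-cycle to $2$-cycle) then agrees with the paper's explicit map.
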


\begin{proof}
For the given types, the corresponding topological polynomials and realizability
conditions are
\[
P_{17}(z)
=
T\left((\mathcal{W}^1_i\mathcal{W}^1_i)_l^0;z\right)
=
1-z^{k_1}-z^{k_2}-z^{l+1}+z^{k_2+l+1},
\qquad
|\mathcal{W}^1|\geq1,
\]
and
\[
P_{18}(z)
=
T\left((\mathcal{B}_i\mathcal{B}_i)_l^0;z\right)
=
1-z^{k_1}-z^{k_2}-z^{l+1},
\qquad
|\mathcal{B}|\geq1.
\]

There is no condition on $\mathcal{B}$ for the polynomial $P_{17}$, so
$|\mathcal{B}|\geq0$ is sufficient. On the other hand, by
Lemma~\ref{lem:W1>0}, every admissible digraph corresponding to $P_{18}$
also satisfies $|\mathcal{W}^1|\geq1$. Hence, whenever $P_{18}$ is
realizable, $P_{17}$ is realizable with the same parameters. Moreover, for
fixed parameters and every $z\in(0,1)$, we have
\[
P_{18}(z)
=
1-z^{k_1}-z^{k_2}-z^{l+1}
<
1-z^{k_1}-z^{k_2}-z^{l+1}+z^{k_2+l+1}
=
P_{17}(z).
\]
Therefore the smallest positive root of $P_{18}$ is smaller than that of
$P_{17}$. Thus the type
\[
(\mathcal{B}_i\mathcal{B}_i)_l^0
\]
cannot improve the minimum and need not be considered.

It remains to analyze
\[
P_{17}(z)
=
1-z^{k_1}-z^{k_2}-z^{l+1}+z^{k_2+l+1}
=
1-z^{k_1}-z^{k_2}-z^{l+1}(1-z^{k_2}),
\qquad
|\mathcal{W}^1|\geq1.
\]
We compare it with
\[
P_{12}(z)
=
1-z^{k_1}-z^{k_2}-z^{k_1+l-t}+z^m
=
1-z^{k_1}-z^{k_2}-z^{k_1+l-t}(1-z^{k_2}),
\qquad
|\mathcal{W}^1|\geq2.
\]
Assume first that $|\mathcal{W}^1|\geq2$. Then $k_1-t\geq2$, and hence
\[
l+1<l+2\leq k_1-t+l.
\]
Since $z\in(0,1)$, it follows that
\[
P_{17}(z)
=
1-z^{k_1}-z^{k_2}-z^{l+1}(1-z^{k_2})
<
1-z^{k_1}-z^{k_2}-z^{k_1+l-t}(1-z^{k_2})
=
P_{12}(z).
\]
Thus, in this case, the smallest positive root of $P_{17}$ is smaller than
that of $P_{12}$, so $P_{17}$ cannot attain the minimum.

It remains to consider the case $|\mathcal{W}^1|=1$. Then $k_1=t+1$.
Since $t\leq k_2\leq k_1$, we have either $k_2=t$ or $k_2=t+1$. Write
\[
k_2=t+\alpha,
\qquad
\alpha\in\{0,1\}.
\]
Using the relation $k_1+k_2-t+l=m$, we obtain
\begin{equation}\label{alpha_rel}
t+\alpha+l+1=m.
\end{equation}
Consequently,
\[
P_{17}(z)
=
1-z^{t+1}-z^{t+\alpha}-z^{l+1}+z^m.
\]

Let $z_0\in(0,1)$ be the root of $P_{\min}(z)$, that is,
\[
1-2z_0^{m-1}-z_0^m=0.
\]
By \eqref{alpha_rel} and the AM-GM inequality,
\[
z_0^{t+\alpha}+z_0^{l+1}\geq 2z_0^{m/2}.
\]
Moreover, \eqref{alpha_rel} gives
\[
m-(t+1)=\alpha+l\geq0,
\]
and hence $t+1\leq m$. Therefore $z_0^{t+1}\geq z_0^m$. Combining these
estimates gives
\[
P_{17}(z_0)
=
1-z_0^{t+1}-z_0^{t+\alpha}-z_0^{l+1}+z_0^m
\leq
1-2z_0^{m/2}.
\]
By \eqref{AM-GM}, for $m\geq4$ we have
\[
1-2z_0^{m/2}<0.
\]
Thus $P_{17}(z_0)<0$, and so the smallest positive root of $P_{17}$ is
smaller than $z_0$. Hence $P_{17}$ cannot attain the minimum for
$m\geq4$.

It remains to treat the case $m=3$. Then
\[
P_{\min}(z_0)=1-2z_0^2-z_0^3=(1+z_0)(1-z_0-z_0^2)=0,
\]
and since $z_0\in(0,1)$, we get
\begin{equation}\label{odhad}
1-z_0-z_0^2=0.
\end{equation}
Because $t\geq1$, equation \eqref{alpha_rel} gives only the following three
possibilities for $(\alpha,t,l)$:
\[
(0,1,1),\qquad (1,1,0),\qquad \text{and}\qquad (0,2,0).
\]

In the cases $(1,1,0)$ and $(0,1,1)$, we obtain
\[
P_{17}(z_0)=1-z_0-2z_0^2+z_0^3.
\]
Using \eqref{odhad}, this becomes
\[
P_{17}(z_0)=z_0^2(z_0-1)<0.
\]
Hence these two cases do not attain the minimum.

In the remaining case $(0,2,0)$, we obtain
\[
P_{17}(z_0)=1-z_0-z_0^2=0.
\]
Thus the digraph
\[
(\mathcal{W}^1_3\mathcal{W}^1_3)_0^0
\text{-}\mathcal{B}^{\,2}_{3,2}
\]
has the same spectral radius as the already obtained minimizers
\[
\mathcal{C}^\times_3
\quad\text{and}\quad
\left(\mathcal{W}^1_{2}\mathcal{W}^1_{3}\right)^{0^-}_0
\text{-}\mathcal{B}^{\,1}_{3,1}.
\]

Finally, this exceptional digraph is isomorphic to
\[
\left(\mathcal{W}^1_{2}\mathcal{W}^1_{3}\right)^{-}_0
\text{-}\mathcal{B}^{\,1}_{3,1}.
\]
Indeed, define
\[
\phi:
V\left(
\left(\mathcal{W}^1_{2}\mathcal{W}^1_{3}\right)^{-}_0
\text{-}\mathcal{B}^{\,1}_{3,1}
\right)
\to
V\left(
(\mathcal{W}^1_3\mathcal{W}^1_3)_0^0
\text{-}\mathcal{B}^{\,2}_{3,2}
\right)
\]
by
\[
\phi(v)=
\begin{cases}
w^1_3, & \text{if } v=h_1, \\
b_2, & \text{if } v=w^1_3, \\
h_1, & \text{if } v=w^1_2.
\end{cases}
\]
\end{proof}

\begin{lemma}\label{lem:bound}
Let $m\geq 3$, and let $\mathcal{C}_m^\times$ be defined as above. Then
\[
2^{\frac{1}{m-1}}
<
\rho\left(\mathcal{C}_m^\times\right)
<
3^{\frac{1}{m-1}}.
\]
\end{lemma}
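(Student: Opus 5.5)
By Lemma~\ref{lem:isom} and Proposition~\ref{spec_rad}, $\rho(\mathcal{C}_m^\times)=R_m^{-1}$, where $R_m\in(0,1)$ is the unique root of $P_{\min}(z)=1-2z^{m-1}-z^m$. Uniqueness holds because $P_{\min}$ is strictly decreasing on $(0,1)$ with $P_{\min}(0)=1$ and $P_{\min}(1)=-2$. Hence the claimed inequality is equivalent to
\[
3^{-\frac{1}{m-1}}<R_m<2^{-\frac{1}{m-1}}.
\]
Since $P_{\min}$ is strictly decreasing on $(0,1)$, it suffices to locate sign changes: I will show that $P_{\min}$ is positive at the left endpoint and negative at the right endpoint.

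\emph{Upper bound for $R_m$.} Put $b=2^{-1/(m-1)}\in(0,1)$, so that $b^{m-1}=\tfrac12$. Then
\[
P_{\min}(b)=1-2\cdot\tfrac12-b^m=-b^m<0 .
\]
Hence $R_m<b$, which gives $\rho(\mathcal{C}_m^\times)>2^{1/(m-1)}$.

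\emph{Lower bound for $R_m$.} Put $a=3^{-1/(m-1)}$, so that $a^{m-1}=\tfrac13$. Then
\[
P_{\min}(a)=1-\tfrac23-\tfrac{a}{3}=\tfrac{1-a}{3}>0,
\]
because $a<1$. Hence $R_m>a$, which gives $\rho(\mathcal{C}_m^\times)<3^{1/(m-1)}$.

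An equivalent, more conceptual route uses the identity $R_m^{m-1}=1/(2+R_m)$, which is equation \eqref{z_0-to-m} in the proof of Lemma~\ref{lem:min-nisom-m=3}. Since $0<R_m<1$, we have $2<2+R_m<3$, so $\tfrac13<R_m^{m-1}<\tfrac12$. Taking $(m-1)$-th roots and then reciprocals yields exactly the stated bounds. Both inequalities are strict because $R_m\neq 0,1$.

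There is no real obstacle here. The only point needing care is that $R_m$ really is the unique root of $P_{\min}$ in $(0,1)$ and equals $\rho(\mathcal{C}_m^\times)^{-1}$. This is supplied by Lemma~\ref{lem:topological_equation_from_vertex_gf} together with Lemma~\ref{lem:isom}, and it holds for all $m\geq 3$, including $m=3$.
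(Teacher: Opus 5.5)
Your proof is correct: the evaluations $P_{\min}(2^{-1/(m-1)})<0<P_{\min}(3^{-1/(m-1)})$, combined with the strict monotonicity of $P_{\min}$ on $(0,1)$, place $R_m$ in the required interval. Your second route, via $R_m^{m-1}=1/(2+R_m)$ and $0<R_m<1$, is exactly the paper's argument; the sign-check version is an equivalent variant that additionally uses that monotonicity.
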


\begin{proof}
Let $z_0\in(0,1)$ be the root of
\[
T(\mathcal{C}_m^\times;z)=1-2z^{m-1}-z^m.
\]
Then, by \eqref{z_0-to-m},
\[
z_0^{m-1}=\frac{1}{2+z_0}.
\]
Since $z_0\in(0,1)$ and $\rho\left(\mathcal{C}_m^\times\right)=z_0^{-1}$, we have
\[
\frac13<z_0^{m-1}<\frac12,
\]
and therefore
\[
2^{\frac{1}{m-1}}
<
\rho\left(\mathcal{C}_m^\times\right)
<
3^{\frac{1}{m-1}}.
\]
\end{proof}

\begin{theorem}
Let $m\geq 3$, and let $G\in\mathcal{SC}_{m+2}(m)$. Let $R_m\in(0,1)$ be the unique
root in $(0,1)$ of
\[
P_{\min}(z)=1-2z^{m-1}-z^m.
\]
Then
\[
\rho(G)\geq R_m^{-1},
\]
and hence
\[
\min_{G\in\mathcal{SC}_{m+2}(m)}\rho(G)
=
R_m^{-1}
=
\rho\left(\mathcal{C}_m^\times\right).
\]
Moreover,
\[
2^{\frac{1}{m-1}}
<
\rho\left(\mathcal{C}_m^\times\right)
<
3^{\frac{1}{m-1}}.
\]

If $m\geq 4$, equality
\[
\rho(G)=R_m^{-1}
\]
holds if and only if
\[
G\cong \mathcal{C}_m^\times.
\]
In the notation introduced above, this extremal isomorphism class is represented by
\[
(\mathcal{W}^1_m\mathcal{B}_{m-2})_0
\text{-}\mathcal{B}^{m-1}_{m,m-1},
\]
\[
(\mathcal{W}^1_{m-1}\mathcal{W}^2_{m-1})_0
\text{-}\mathcal{B}^{m-2}_{m-1,m-1},
\]
and
\[
(\mathcal{B}_2\mathcal{W}^1_m)_0
\text{-}\mathcal{B}^{m-1}_{m,m-1},
\]
all of which are isomorphic to $\mathcal{C}_m^\times$.

If $m=3$, equality holds if and only if $G$ is isomorphic to one of the two non-isomorphic digraphs
\[
\mathcal{C}_3^\times
\]
and
\[
(\mathcal{W}^1_2\mathcal{W}^1_3)^-_0
\text{-}\mathcal{B}^1_{3,1}.
\]
Both have spectral radius
\[
\rho(\mathcal{C}_3^\times)
=
\rho\left(
(\mathcal{W}^1_2\mathcal{W}^1_3)^-_0
\text{-}\mathcal{B}^1_{3,1}
\right)
=
\frac{1+\sqrt{5}}{2},
\]
but
\[
(\mathcal{W}^1_2\mathcal{W}^1_3)^-_0
\text{-}\mathcal{B}^1_{3,1}
\not\cong
\mathcal{C}_3^\times.
\]
\end{theorem}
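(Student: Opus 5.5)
The proof will be an assembly of the case analysis already carried out. By Corollary~\ref{butterfly-core}, every $G\in\mathcal{SC}_{m+2}(m)$ arises from a butterfly core $\mathcal{B}^{\,t}_{k_1,k_2}$ by inserting one ear. After relabelling the two cycles we may assume $k_1\geq k_2$. So $G$ is one of the types listed in Table~\ref{tab:ear-types}, and its parameters satisfy \eqref{params}. For each type, Lemma~\ref{lem:topological_equation_from_vertex_gf} identifies $\rho(G)^{-1}$ (via Proposition~\ref{spec_rad}) with the unique root in $(0,1)$ of the corresponding topological polynomial. The $(i,j)$- and $(s,r)$-optimizations of Section~4, together with the corollaries reducing to $l=0$, show that this root is bounded above by the root of the relevant optimized polynomial $P_\alpha$ in Table~\ref{tab:optimized-polynomials}. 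Since all these polynomials are strictly decreasing on $(0,1)$ with value $1$ at $0$, the pointwise comparison $P_\alpha(z)\leq P_{\min}(z)$ on $(0,1)$ yields $\rho(G)\geq R_m^{-1}$. Strict inequality $P_\alpha<P_{\min}$ yields $\rho(G)>R_m^{-1}$.

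I will go through the eighteen families in turn:
\begin{itemize}
\item $\alpha\in\mathcal{A}$: Lemma~\ref{lem:min}.
\item $\alpha=4$: Lemma~\ref{lem:m=3}.
\item $\alpha=5$: Lemma~\ref{lem:-1}.
\item $\alpha=9,14,15$: Lemma~\ref{lem:-3}.
\item $\alpha=12$: Lemma~\ref{lem:min-nisom-m=3}.
\item $\alpha=16$: Lemma~\ref{lem:HH}.
\item $\alpha=17,18$: Lemma~\ref{lem:-1_min}.
\end{itemize}
This gives the lower bound for every $m\geq3$. Attainment comes from Lemma~\ref{lem:isom}, which gives $T(\mathcal{C}^\times_m;z)=P_{\min}$ and hence $\rho(\mathcal{C}^\times_m)=R_m^{-1}$. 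The two-sided estimate is Lemma~\ref{lem:bound}.

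For the equality cases I will track which parameter choices make $P_\alpha\equiv P_{\min}$, or at least give the same root. In Lemma~\ref{lem:min}, strict pointwise inequality eliminates $P_1,P_7,P_8,P_{10},P_{11},P_{13}$. Among $P_2,P_3,P_6$, equality forces the specific extremal parameters: $l=0$, and $(k_1,k_2)$ fixed as listed, with $t$ then determined by \eqref{params}. Thus the resulting digraphs are exactly the three representatives named in the statement. Lemma~\ref{lem:isom} shows they are all $\cong\mathcal{C}^\times_m$ for $m\geq4$. For $m=3$, Lemma~\ref{lem:isom} instead covers only the two digraphs that are realizable. The remaining families give strict inequality for $m\geq4$ by Lemmas~\ref{lem:m=3}, \ref{lem:min-nisom-m=3} and \ref{lem:-1_min}, so equality for $m\geq4$ holds only for $\mathcal{C}^\times_m$. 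For $m=3$, the additional equality cases are as follows. $(\mathcal{W}^1_3\mathcal{H})_0$-$\mathcal{B}^2_{3,2}$ is $\cong\mathcal{C}_3^\times$ by Lemma~\ref{lem:m=3}. $(\mathcal{W}^1_2\mathcal{W}^1_3)^-_0$-$\mathcal{B}^1_{3,1}$ is a genuinely new minimizer by Lemma~\ref{lem:min-nisom-m=3}. $(\mathcal{W}^1_3\mathcal{W}^1_3)^0_0$-$\mathcal{B}^2_{3,2}$ is isomorphic to that one by Lemma~\ref{lem:-1_min}. Together with the non-isomorphism shown there (loop versus no loop), this yields exactly two classes, both with $\rho=(1+\sqrt5)/2$.

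The main obstacle is making the equality direction rigorous, not the bound itself. The optimization steps were phrased as maximizing the root, so I must check that every non-optimal choice of $(i,j)$, $(s,r)$, $l$ or $(k_1,k_2,t)$ gives a \emph{strictly} smaller root. This holds because decreasing any exponent of a negative monomial strictly decreases the polynomial on $(0,1)$. I must also check that the isomorphism reductions for $l\geq1$ do not hide further digraphs, and that the $l=0$ substitutes with index shifts are covered. I would handle this by noting that each digraph with $l\geq1$ in those families is isomorphic to an $l=0$ digraph in the same family, which was already classified. A further care point for $m=3$ is tracking all degenerate realizability cases (e.g.\ $k_2=t=1$, loops). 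Here I would explicitly enumerate the small parameter sets allowed by \eqref{params} with $m=3$, to confirm that no third class arises.
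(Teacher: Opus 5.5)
Your proposal is correct and is essentially the paper's own proof: it uses the same reduction via Corollary~\ref{butterfly-core} to the eighteen optimized families of Table~\ref{tab:optimized-polynomials}, dispatched by exactly the same lemmas, with Lemma~\ref{lem:isom} giving attainment and Lemma~\ref{lem:bound} giving the two-sided estimate. Your added requirement that non-optimal parameter choices give strictly smaller roots makes explicit what the paper leaves implicit in the equality case; it is also what actually covers the non-optimized digraphs with $l\geq1$, since the isomorphism lemmas treat only the $(i,j)$-optimized ones.
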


\begin{proof}
In Sections~3 and~4, we showed that every digraph
$G\in\mathcal{SC}_{m+2}(m)$ can be represented in the form
\[
(\mathcal{X}_i\mathcal{Y}_j)_l
\text{-}\mathcal{B}^{\,t}_{k_1,k_2}, \quad \text{or} \quad (\mathcal{X}_i\mathcal{X}_j)_l^{0/+/-}
\text{-}\mathcal{B}^{\,t}_{k_1,k_2},
\]
where
\[
\mathcal{X},\mathcal{Y}\in
\{\mathcal{W}^1,\mathcal{W}^2,\mathcal{B},\mathcal{H}\}.
\]
Moreover, for each such type we derived the corresponding
$(s,r,i,j)$-optimized topological polynomial for a candidate minimizer
within that type. All eighteen optimized polynomials are listed in
Table~\ref{tab:optimized-polynomials}.

In Lemma~\ref{lem:min}, we considered nine of these types and showed that the
extremal topological polynomial is
\[
P_{\min}(z)=1-2z^{m-1}-z^m.
\]
For $m\geq4$, this polynomial is attained by the digraphs
\[
(\mathcal{W}^1_m\mathcal{B}_{m-2})_0
\text{-}\mathcal{B}^{\,m-1}_{m,m-1},
\]
\[
(\mathcal{W}^1_{m-1}\mathcal{W}^2_{m-1})_0
\text{-}\mathcal{B}^{\,m-2}_{m-1,m-1},
\]
and
\[
(\mathcal{B}_2\mathcal{W}^1_m)_0
\text{-}\mathcal{B}^{\,m-1}_{m,m-1}.
\]
By Lemma~\ref{lem:isom}, all three of these digraphs are isomorphic to
$\mathcal{C}^\times_m$.

For $m=3$, the same polynomial is attained by
\[
(\mathcal{W}^1_2\mathcal{W}^2_2)_0
\text{-}\mathcal{B}^{\,1}_{2,2}
\]
and
\[
(\mathcal{B}_2\mathcal{W}^1_3)_0
\text{-}\mathcal{B}^{\,2}_{3,2},
\]
and Lemma~\ref{lem:isom} shows that both are isomorphic to
$\mathcal{C}^\times_3$.

Lemma~\ref{lem:HH} shows that one further type does not attain the minimum spectral radius.
Lemma~\ref{lem:m=3} shows that another candidate attains the same minimum
only in the exceptional case $m=3$, and that in this case the corresponding
digraph is again isomorphic to $\mathcal{C}^\times_3$.
Lemmas~\ref{lem:-1} and~\ref{lem:-3} show that four further candidates
cannot attain the minimum.

Lemma \ref{lem:min-nisom-m=3} shows that, for $m=3$, the digraph
\[
(\mathcal{W}^1_2\mathcal{W}^1_3)^-_0
\text{-}\mathcal{B}^1_{3,1}
\]
has the same spectral radius as $\mathcal{C}_3^\times$, but is not isomorphic to it. Moreover,
\[
\rho(\mathcal{C}_3^\times)
=
\rho\left(
(\mathcal{W}^1_2\mathcal{W}^1_3)^-_0
\text{-}\mathcal{B}^1_{3,1}
\right)
=
\frac{1+\sqrt{5}}{2}.
\]
On the other hand, for $m\geq4$, the type
\[
\left(\mathcal{W}^1_{t+1}\mathcal{W}^1_{k_1}\right)_l^-
\text{-}\mathcal{B}^{\,t}_{k_1,k_2}
\]
never attains the same minimal spectral radius as
$\mathcal{C}^\times_m$.

Finally, Lemma~\ref{lem:-1_min} shows that one further type does not attain
the minimum, while the other attains the minimum only in the case $m=3$.
In this exceptional case, the corresponding digraph is
\[
(\mathcal{W}^1_3\mathcal{W}^1_3)_0^0
\text{-}\mathcal{B}^{\,2}_{3,2},
\]
which is isomorphic to
\[
\left(\mathcal{W}^1_2\mathcal{W}^1_3\right)_0^-
\text{-}\mathcal{B}^{\,1}_{3,1}.
\]

Thus all eighteen optimized candidates for the minimum listed in Table~\ref{tab:optimized-polynomials} have
been considered. It follows that, for $m\geq 4$, the minimum spectral radius
in the class $\mathcal{SC}_{m+2}(m)$ is attained precisely by the isomorphism
class of $\mathcal{C}_m^\times$. For $m=3$, there are two non-isomorphic
minimizers, namely
\[
\mathcal{C}_3^\times
\]
and
\[
(\mathcal{W}^1_2\mathcal{W}^1_3)^-_0
\text{-}\mathcal{B}^1_{3,1}.
\]
The bounds
\[
2^{\frac{1}{m-1}}
<
\rho\left(\mathcal{C}_m^\times\right)
<
3^{\frac{1}{m-1}}
\]
were proved in Lemma~\ref{lem:bound}.
\end{proof}

\normalfont
The structural classification obtained in this paper also suggests a natural
extremal problem at the opposite end of the spectral-radius spectrum.
Recall that minimizing the spectral radius is equivalent to maximizing the
smallest positive root of the corresponding topological polynomial.
Conversely, maximizing the spectral radius amounts to minimizing this root.

For the polynomial families obtained in Section~4, this suggests making the
exponents of the negative monomials as small as possible, subject to the
structural relation and the corresponding realizability conditions. This leads to the
following conjecture.

\begin{conjecture}
Let $m\geq3$.

\begin{enumerate}
    \item Among all digraphs in $\mathcal{SC}_{m+2}(m)$, the maximum spectral
    radius is attained, up to isomorphism, by the digraph represented by
    \[
        (\mathcal{H}\mathcal{H})_l
        \text{-}\mathcal{B}_{k_1,k_2}^{\,t}
    \]
    with
    \[
        (k_1,k_2,t,l)
        \in
        \bigl\{
        (2,1,1,m-2),
        (m-1,1,1,1),
        (m-1,2,1,0)
        \bigr\}.
    \]
    Its topological polynomial is
    \[
        P_{\max}(z)=1-z-z^2-z^{m-1}.
    \]
    Hence, if $R_m^{\max}\in(0,1)$ denotes its smallest positive root, then
    \[
        \max_{G\in\mathcal{SC}_{m+2}(m)}\rho(G)
        =
        \left(R_m^{\max}\right)^{-1}.
    \]

    \item For every $m\geq4$, among all loopless digraphs in
    $\mathcal{SC}_{m+2}(m)$, the maximum spectral radius is attained, up to
    isomorphism, by the digraph represented by
    \[
        (\mathcal{H}\mathcal{H})_l
        \text{-}\mathcal{B}_{k_1,k_2}^{\,t}
    \]
    with
    \[
        (k_1,k_2,t,l)
        \in
        \bigl\{
        (2,2,1,m-3),
        (m-2,2,1,1)
        \bigr\}.
    \]
    Its topological polynomial is
    \[
        \widehat P_{\max}(z)=1-2z^2-z^{m-2}.
    \]
    Hence, if $\widehat R_m^{\max}\in(0,1)$ denotes its smallest positive root,
    then
    \[
        \max_{\substack{G\in\mathcal{SC}_{m+2}(m)\\ G\text{ loopless}}}
        \rho(G)
        =
        \left(\widehat R_m^{\max}\right)^{-1}.
    \]
\end{enumerate}
\end{conjecture}

For $m=3$, the loopless case is exceptional. Up to isomorphism,
$\mathcal{SC}_{5}(3)$ contains only one loopless digraph. Consequently, its
maximum and minimum spectral radii coincide and are equal to
\[
    \frac{1+\sqrt{5}}{2}.
\]

\section{Acknowledgements}
Research was funded by institutional support for the development of research organisations (I\v{C} 47813059) and by Grant SGS 16/2024.

\pagebreak
\bibliographystyle{plain}
\bibliography{refs}

\end{document}